\newtheorem{proposition}{Proposition}[section]
\newtheorem{theorem}{Theorem}[section]
\newtheorem{definition}{Definition}[section]
\newtheorem{corollary}{Corollary}[section]
\newtheorem{lemma}{Lemma}[section]
\newtheorem{remark}{Remark}[section]
\newtheorem{assumption}{Assumption}[section]
\newcommand{\<}{\langle}
\renewcommand{\>}{\rangle}
\numberwithin{equation}{section}
\begin{document}
\markboth{}{}

\title[Principal Component Analysis for Semimartingales and Stochastic PDE] {Principal Component Analysis for Semimartingales and Stochastic PDE}

\author{Alberto Ohashi}

\address{Departamento de Matem\'atica, Universidade Federal da Para\'iba, 13560-970, Jo\~ao Pessoa - Para\'iba, Brazil}\email{alberto.ohashi@pq.cnpq.br; ohashi@mat.ufpb.br}

\author{Alexandre B. Simas}

\address{Departamento de Matem\'atica, Universidade Federal da Para\'iba, 13560-970, Jo\~ao Pessoa - Para\'iba, Brazil}\email{alexandre@mat.ufpb.br}

\thanks{The first author would like to thank the Mathematics
department of ETH Zurich and Forschungsinstitut f$\ddot{u}$r Mathematik (FIM) for the very kind hospitality during the first year of this research project. In particular, he would like to thank Josef Teichmann  for inspirational discussions about finite dimensional realizations of stochastic PDEs and for his encouragement to study their statistical aspects. We also would like to thank M. Laurini for useful discussions on PCA
}
\date{\today}

\keywords{ Principal component analysis, factor models, semimartingales} \subjclass{Primary:   ; Secondary: }

\begin{center}
\end{center}

\begin{abstract}
In this work, we develop a novel principal component analysis (PCA) for semimartingales by introducing a suitable spectral analysis for the quadratic variation operator. Motivated by high-dimensional complex systems typically found in interest rate markets, we investigate correlation in high-dimensional high-frequency data generated by continuous semimartingales. In contrast to the traditional PCA methodology, the directions of large variations are not deterministic, but rather they are bounded variation adapted processes which maximize quadratic variation almost surely. This allows us to reduce dimensionality from high-dimensional semimartingale systems in terms of quadratic covariation rather than the usual covariance concept.

The proposed methodology allows us to investigate space-time data driven by multi-dimensional latent semimartingale state processes. The theory is applied to discretely-observed stochastic PDEs which admit finite-dimensional realizations. In particular, we provide consistent estimators for finite-dimensional invariant manifolds for Heath-Jarrow-Morton models. More importantly, components of the invariant manifold associated to volatility and drift dynamics are consistently estimated and identified. The proposed methodology is illustrated with both simulated and real data sets.
\end{abstract}
\maketitle
\tableofcontents

\section{Introduction}
Dimension reduction techniques have been intensively studied over the last years due to the advent of high-dimensional data in a variety of applied fields. Towards an effective reduction dimension, it is crucial to interpret correctly what kind of lower dimensional manifold one has to find in order to represent the data properly. For instance, if the second moment structure reasonable describes the dynamics in the data, then the classical Principal Component Analysis (henceforth abbreviated by PCA) and its various extensions are the natural candidates to reduce dimensionality.

There are many cases where correlation in high-dimensional systems may not be accurately described by covariance structures. An important example is the correlation typically found in high-frequency data which is better described by the so-called quadratic variation matrix

$$
[M]_t: = [M^i,M^j]_t; 1\le i,j\le d;0\le t\le T,
$$
where $M=(M^1, \ldots, M^d)$ is a $d$-dimensional semimartingale sampled over the time horizon $[0,T]$ and $[M^i,M^j]$ is the quadratic covariation process between $M^i$ and $M^j$.


In a financial context, the process $[M]_\cdot$ is called the volatility matrix (sometimes called integrated volatility). The total amount of volatility in a $d$-dimensional semimartingale system over $[0,t]$ is fully described by the following quantity

$$
\|[M]_t\|^2_{(2)} = \sum_{j=1}^{d}(\lambda^j_t)^2,
$$
where $\{\lambda^j_t\}_{j=1}^d$ are the random eigenvalues of $[M]_t$ and $\|\cdot\|_{(2)}$ is the usual Hilbert-Schmidt norm. Volatility is by far the most important quantity which needs to be estimated for asset pricing, asset allocation and risk management, specially in high-dimensional portfolios. The estimation of high-dimensional quadratic variation matrices has been a topic of great interest in the last years. We refer the reader to the works~\cite{bandorff, wang, zeng, mancino, tao, bibinger, fan, podolskij} and other references therein.


Despite all the recent progress on volatility matrix estimation, there has been remarkably little fundamental theoretical study on dimension reduction techniques based on high-dimensional quadratic variation matrices. One notorious difficulty is the dynamic interpretation of directions and principal components over the time horizon which in typical cases is formulated in a high-frequency domain. Indeed, $\{[M]_t; 0\le t\le T\}$ is fully random which makes the analysis more evolved than the standard PCA. More precisely, all the potentially optimal projections will be stochastic processes rather than deterministic vectors.


In view of the fact many correlation structures in high-dimensional data are fully represented by the quadratic variation concept, it is natural and necessary to construct a dimension reduction methodology strictly associated to $[M]$ rather than on classical covariance or conditional distributions. This is the program we start to carry out in this paper.

\subsection{Contributions}
Let $M = (M^1, \ldots, M^d)$ be a $d$-dimensional semimartingale. The starting point of the analysis is to solve an identification problem related to a possible singularity of the random matrix $[M]_T$ which can be typically found e.g in large portfolios of financial assets (see e.g Burashi, Porchia and Trojani~\cite{burashi}, Ait-Sahalia and Xiu~\cite{sahalia} and Fan, Li and Yu~\cite{fan} and other references therein) and affine term structure models~(see e.g Bjork and Land\'en~\cite{bjork2} and Filipovic \cite{filipo3} and Filipovic and Sharef~\cite{filipo6}). More precisely, in the presence of non-trivial correlation among semimartingales, one has $rank~[M]_T < d~a.s$ and then, under mild assumptions, one can split the set $\mathcal{M} = span~\{M^1, \ldots, M^d\}$ into two complementary linear spaces $(\mathcal{W}, \mathcal{D})$ such that

$$
\mathcal{M} = \mathcal{W}\oplus \mathcal{D}
$$
where $\mathcal{W}$ and $\mathcal{D}$ contains only elements of $\mathcal{M}$ with non-zero and zero quadratic variation, respectively. The space $\mathcal{W}$ fully describes the volatility structure of $\mathcal{M}$ while $\mathcal{D}$ is responsible for its hidden pure drift (null quadratic variation) dynamics. At this point, we stress that the potential singularity of the quadratic variation matrix $[M]_T$ introduces non-observable drift components into $M$ which \textit{cannot} be discarded in a high-frequency situation. Both spaces are equally important to explain the dynamics of $M$ in a given physical probability measure. In strong contrast, directions with null variance can be fully discarded in the classical PCA. This is the first major difference between the classical PCA and the theory developed in this article.

We follow the natural and simple idea to seek random variables $v_t=(v^1_t, \ldots, v^d_t)$
such that

$$
\sum_{j=1}^d v^j_t M^j_t
$$
has the largest possible \textit{instantaneous} quadratic variation over $[0,t]$, where $v_t$ is interpreted as a \textit{random coefficient} at time $t\in[0,T] $ rather than a process. By iterating this procedure in an orthogonal way, we shall get a linear transformation of $M$ which under some mild primitive conditions will be a finite-dimensional semimartingale ranked in terms of quadratic variation. Starting with consistent estimators $\widehat{[M]}_T$ for the quadratic variation matrix $[M]_T$ (see e.g  \cite{mikland, wang, zeng, mancino, tao, bibinger, fan, podolskij} and other references therein), we are able to propose consistent estimators for $(\mathcal{W},\mathcal{D})$ by means of a simple eigenvalue analysis of $\widehat{[M]}_T$ based on high-frequency observations of $M$. This allows us to reduce dimensionality in terms of quadratic variation in a very clear and consistent way. Equally important, the methodology also estimates bounded variation components in $\mathcal{D}$ which can not be neglected in multi-dimensional semimartingale systems.

The PCA for semimartingales introduced in the first part of the article is applied to the estimation of principal components of discretely-observed space-time semimartingales which describe stochastic partial differential equations (henceforth abbreviated by stochastic PDEs) admitting finite-dimensional realizations. In particular, in the second part of this article, we illustrate the theory by studying the problem of the estimation of the so-called finite-dimensional invariant manifolds w.r.t to a stochastic PDE

\begin{equation}\label{eqpde}
dr_t = \Big(A(r_t) + F(r_t)\Big)dt + \sum_{j=1}^m\sigma^j(r_t)dB^j_t; r_0=h\in E; 0\le t\le T,
\end{equation}
where $E$ is a potentially infinite-dimensional Sobolev-type space of continuous functions and $(A,F,\sigma^i; 1\le i\le m)$ satisfy standard assumptions for the existence of solution.

Many space-time phenomena in natural and social sciences can be described by solutions of stochastic PDEs like (\ref{eqpde}). However, the intrinsic infinite-dimensionality of space-time data generated by models like (\ref{eqpde}) creates a big challenge in the statistical analysis of these models. In particular cases, it is well-known that one can reduce dimensionality and still get a very rich class of space-time data generated by models of type (\ref{eqpde}). For instance, under Lie algebra conditions (see Filipovic and Teichmann \cite{filipo1} and Bjork and Svensson~\cite{bjork1}) on the coefficients of (\ref{eqpde}), it is well known that there exists a family of affine manifolds $\{\mathcal{G}_t; 0\le t\le T\}$ of curves and a $d$-dimensional semimartingale factor process $M$ such that

\begin{equation}\label{cona}
r_t(\cdot) = \mathcal{G}_t(\cdot, M_t); r_0=h; 0\le t\le T,
\end{equation}
where $\mathcal{G} = \{ \mathcal{G}_t(\cdot; x); x\in \mathcal{X}\subset \mathbb{R}^d; 0 \le t\le T\}\subset E$ is a finite-dimensional parameterized family of smooth curves. We shall write it as $\mathcal{G}_t = \phi_t + V$ where $V$ is a $d$-dimensional vector space generated by smooth curves and $\phi$ is an $E$-valued smooth parametrization which we assume to be a zero quadratic variation function.

Two central unsolved problems in the stochastic PDE modelling are: (i) the construction of statistical tests to check existence of $\mathcal{G}$ and (ii) the development of related estimation methods. The importance of this research agenda can be mainly understood in applications to interest rate modelling and other term-structure problems in Mathematical Finance. The literature is vast so we refer the reader to e.g~\cite{bjork, bjork1, bjork2, filipo, filipo1, filipo2, filipo3, filipo5, richter, teichmann, harms, ohashi, angeline, slinko, mancino1} and other references therein. In short, under the assumption of existence of $\mathcal{G}$, the estimation of $V$ is essential for a consistent calibration of potentially infinite-dimensional term-structure models.

Under the assumption that the stochastic PDE (\ref{eqpde}) admits an affine finite-dimensional representation~(\ref{cona}), we apply the semimartingale PCA to estimate and identify components of invariant manifolds $\mathcal{G}$ which depicts volatility and drift dynamics in space. More precisely, let us consider the finite rank random linear operator $Q_T:E\rightarrow V$ defined by

$$Q_T f:=\langle Q_T(\cdot, ), f \rangle_E; ~f\in E, $$
where $Q_T(u,v):=[r(u),r(v)]_T; u,v \ge \mathbb{R}_+$, $\langle \cdot, \cdot\rangle_E$ is the inner product of $E$ and we set $\mathcal{Q}:=~range~Q_T$. We notice the quadratic variation of the stochastic PDE (\ref{eqpde}) is fully generated by $Q_T$. In particular, the associated Hilbert-Schmidt norm

$$
\|Q_T\|^2_{(2)}=\sum_{j=1}^{dim~V}\theta^2_j
$$
fully describes the total amount of energy related to the quadratic variation of (\ref{eqpde}) over $[0,T]$. Here, $\{\theta_i\}_{i=1}^{dim~V}$ are the eigenvalues of $Q_T$ arranged in decreasing order.


In general, $dim~\mathcal{Q} \le ~dim~V~a.s$, but in typical situations we do have $\text{dim}~\mathcal{Q}< \text{dim}~V$~\footnote{The empirical literature on interest rate modelling reports strong evidence of correlation among risk factors (see e.g~\cite{alexander, piazzesi,buraschi1}) which suggest that one can typically find $dim ~\mathcal{Q} < \text{dim}~V$ in case of affine models. From theoretical side, this phenomena is also related to no-arbitrage restrictions imposed on affine models. See e.g~\cite{bjork1,filipo5,filipo6,almeida1,almeida2} and other references therein.}. Let $\mathcal{N}$ be the complementary subspace of $\mathcal{Q}$ in $V$. Under mild assumptions, we have the following splitting

$$
V = \mathcal{Q}\oplus \mathcal{N}~a.s.
$$
In one hand, the pair of subspaces $(\mathcal{Q}, \mathcal{N})$ should be considered as the analogous spaces to $(\mathcal{W},\mathcal{D})$ but in the spatial variable. On the other hand, we stress that $M$ is not observed and (\ref{cona}) is treated as a factor model

\begin{equation}\label{spacetimedata}
r_t = \phi_t + \sum_{j=1}^{\text{dim}~V}M^j_t\lambda_j; V = \text{span}~\{\lambda_1, \ldots, \lambda_d\}
\end{equation}
with dimension $d=dim~\mathcal{Q} + dim~\mathcal{N}$. The present methodology allows us to estimate and identify directions of the invariant manifold which come from the volatility (represented by $\mathcal{Q}$) and the drift (represented by $\mathcal{N}$). More importantly, we are able to identify them separately which allows us to estimate null and non-null quadratic variation factors by projecting space-time data of the form (\ref{spacetimedata}) onto a pair of estimated vector spaces $(\widehat{\mathcal{Q}}\oplus \widehat{\mathcal{N}})$. We consider this separation feature as the most important aspect of the second part of this article. As a by-product, our methodology brings two contributions to the field: It provides a consistent volatility dimension reduction and a method to estimate hidden pure drift components in space-time semimartingale data generating processes.

Our methodology is a combination of classical factor models jointly with suitable random transformations over the space of latent semimartingales.  More precisely, our approach consists essentially in two steps: Firstly, we apply an empirical covariance operator onto the space-time data to obtain a factor decomposition of the form
$$\widehat{r}_t(x) = \sum_{j=1}^k \widehat{Y}_t^k \widehat{\lambda}^k(x),~t\in \Pi, x\in \Pi'$$
in the spirit of discrete-type factor models (see e.g Stock and Watson \cite{stock}, Bai~\cite{bai1} and Bai and Ng~\cite{bai}), but in a high-frequency setup as opposed to the usual panel data. In other words, $\Pi\times\Pi'$ is a refining partition of a two-dimensional set $[0,T]\times[a,b]$. In linear structures, the covariance operator only neglects components of null empirical variance so that, under suitable conditions, our first step does not loose information from the invariant manifold $V = \mathcal{Q}\oplus\mathcal{N}$. The second step consists in using the semimartingale PCA jointly with suitable random rotations of latent factor estimators $\widehat{Y}_t$ to infer the underlying semimartingale structure of the data. It is not easy to foresee that this two-step procedure would work. Indeed, to the best of our knowledge it is not known that the covariance operator decomposition is strong enough to provide a resulting process which is amenable to a consistent quadratic variation analysis. In fact, the sequence $\widehat{Y}_t$ is not even associated to a semimartingale, so that the quadratic variation analysis based on this two-step procedure must be considered in a broader sense. The proof that this strategy works is the content of the second part of the paper.

It is important to stress the both steps in our methodology are equally important. For instance, the naive application of classical factor models to infer quadratic variation is non-sense when applied to semimartingale systems. Moreover, a more straightforward strategy based directly on an empirical quadratic variation does not work in full generality due to a possible singularity of the matrix. This last procedure forces the assumptions that $\text{dim}~V=\text{dim}~\mathcal{Q}$ which may not be optimal (in the mean-square sense) in typical situations when $\text{dim}~\mathcal{N}>0$. This is the reason why the two-step procedure in this work is implemented. For instance, Pelger~\cite{pelger} studies principal components directly from the empirical quadratic covariation for factors with jumps and discrete loading factors. One crucial assumption in his setup is the non-singularity of the quadratic variation matrix which restricts the applicability in multivariate systems with non-trivial correlation typically found in large portfolios and interest rate models.

We should mention that another possible framework is introduced by Ait Sahalia and Xie \cite{sahalia2} who interpret principal component analysis by means of the underlying volatility process. The main drawback of this strategy is the fact that rank of the volatility matrix may be strictly smaller than $\text{dim}~[M]_T$ (as shown in Proposition \ref{diffusionmatrix}), thus resulting in a substantial underestimation of $\text{dim}~\mathcal{W}$ and their associated factors. Therefore, similar to Pelger~\cite{pelger}, the strategy introduced by \cite{sahalia2} does not recover in full generality the whole semimartingale stricture ($\mathcal{M}$) involved in the optimal decomposition due to a possible non-negligible dimension ($\text{dim}~\mathcal{D})$ associated to the drift. In addition, the strong assumption of simple eigenvalues imposed in \cite{sahalia2} rules out many finite-dimensional semimartingale systems typically found in applications.

\subsection{Organization of the paper}
The remainder of this article is structured as follows. Section \ref{intro} presents some notation and preliminary results. Section \ref{randomdirections} presents the spectral analysis on a generic quadratic variation matrix. Section \ref{mainexamples} illustrates the existence of bounded variation components in portfolio management and interest rate models. Section \ref{estimationofwd} presents the consistency results for the estimators of the dynamic spaces. Section \ref{estimationFDIM} presents the application of semimartingale PCA to the problem of estimating finite-dimensional invariant manifolds for stochastic PDEs. Section \ref{numerics} presents the numerical results and applications to real data. An Appendix is given in Section \ref{Appendix} which presents an estimator for $\text{dim}~\mathcal{Q}$.

\section{Assumptions and Preliminary Results}\label{intro}
At first, let us fix notation.
\subsection{Notation}
Throughout this article, we are going to work with a fixed stochastic basis of the form $(\Omega, \mathcal{F}_T,\mathbb{F},\mathbb{P} )$ where $(\Omega, \mathcal{F}_T, \mathbb{P})$ is a probability space equipped with a sample space $\Omega$, a sigma-algebra $\mathcal{F}_T$, probability measure $\mathbb{P}$ and a fixed terminal time $0< T< \infty$. We equip the interval $[0,T]$ with the Borel sigma algebra $\mathcal{B}_T$ and we assume the filtration $\mathbb{F}:=\{\mathcal{F}_t; 0\le t \le T\}$ satisfies the usual conditions.

All the algebraic setup in this article will be based on the real linear space $\mathcal{X}^d$ constituted by the set of all $\mathbb{R}^d$-valued $\mathcal{B}_T\times \mathcal{F}_T$-measurable processes. In this article, the most important subclass of $\mathcal{X}^d$ will be the subspace $\mathcal{S}^d
$ constituted by the set of all $\mathbb{R}^d$-valued continuous $\mathbb{F}$-semimartingales on $(\Omega, \mathcal{F}_T,\mathbb{F},\mathbb{P} )$. When $d=1$, we set $\mathcal{S}:=\mathcal{S}^1, \mathcal{X} := \mathcal{X}^1$. We denote $\mathbb{L}^{0,k}_t$ as the set of all $\mathbb{R}^k$-valued and $\mathcal{F}_t$-measurable random variables for $k\ge 1$ and $t\in [0,T]$. Throughout this article, we adopt the following convention: If $Y\in \mathcal{X}^d$, then $Y_t$ is interpreted as a column random vector in $\mathbb{R}^d$. Convergence in probability will be denoted by $\stackrel{p}{\to}$.

In the remainder of this article, $\Pi$ denotes a deterministic partition $0=t_0 < t_1 <\ldots < t_n=T$ and $\|\Pi\|: = max_{1\le i\le n}|t_i-t_{i-1}|$. The set $\mathbb{M}_{p\times q}$ denotes the space of all $p\times q$-real matrices and $\mathbb{M}^+_{p\times p}$ is the subspace of $p\times p$ non-negative symmetric real matrices. The norm of linear operators between Hilbert spaces will be the standard Hilbert-Schmidt norm $\|\cdot\|_{(2)}$ and $P^\top$ denotes the transpose of a matrix $P\in \mathbb{M}_{p\times q};p,q\ge 1$. If $A,B$ are two linear subspaces of $\mathcal{X}$ with $A\subset B$, then we denote $\pi_A$ the usual projection of $B$ onto the quotient space $B/A$. Throughout this article, we omit the variable $\omega\in \Omega$ when no confusion arises.

\subsection{Analysis of quadratic variation matrices}

In this work, the following bracket will play a key role in our analysis

\begin{equation}\label{quadraticcov}
[X,Y]_t:=\lim_{\|\Pi\|\rightarrow 0}\sum_{t_i\in \Pi} \big(X_{t_i} - X_{t_{i-1}}\big)\big( Y_{t_i} - Y_{t_{i-1}} \big); 0\le t\le T,
\end{equation}
in probability.
\begin{definition}\label{qdef}
The \textbf{quadratic covariation} $\{[X,Y]_t; 0\le t\le T\}$ exists for a given pair $(X,Y)\in \mathcal{X}^2$ if the limit (\ref{quadraticcov}) exists for every sequence of partitions $\Pi$ such that $\|\Pi\|\rightarrow 0$. We say that $X\in \mathcal{X}$ has \textbf{null quadratic variation} if $[X,X]_\cdot =0~a.s$
\end{definition}
Of course, $\{[X^1,X^2]_t; 0\le t\le T\}$ is a well-defined bounded variation adapted process for every $(X^1,X^2)\in \mathcal{S}^2$. To shorten notation, we sometimes set $[Y]:= [Y,Y]$ for $Y\in \mathcal{X}$. For a given $X = (X^1,\ldots, X^d)\in \mathcal{X}^d$ and $(t,\omega)\in [0,T]\times \Omega$, with a slight abuse of notation, we write $[X]_t(\omega)\in \mathbb{M}^+_{d\times d}$ to denote the following random matrix

\begin{equation}\label{entmatrix}
[X]_t(\omega):=[X^i,X^j]_t(\omega);~i,j=1,\ldots, d;~0\le t\le T,\omega \in \Omega,
\end{equation}
whenever the right-hand side of (\ref{entmatrix}) exists.

In the remainder of this section, $M=(M^1,\ldots,M^d)$ is a given $d$-dimensional measurable process. We say that $M\in \mathcal{X}^d$ is truly $d$-dimensional if its components $M^1,\ldots,M^d$ are linearly independent over the vector space $\mathcal{X}$. Throughout this paper, we are going to assume the following standing assumptions:

\begin{assumption}\label{A1}
$M$ is a truly $d$-dimensional measurable process.
\end{assumption}
\begin{assumption}\label{A2}
The quadratic variation matrix $\{[M]_t; 0\le t\le T\}$ exists and if there exists $i=1,\ldots,d$ such that $\mathbb{P}\{[M^i,M^i]_t>0\}>0$ then we have $\mathbb{P}\{[M^i, M^i]_t>0\}=1$.
\end{assumption}
\begin{remark}
We clearly do not loose generality by imposing Assumption \ref{A1}. Assumption \ref{A2} is very natural since our theory relies on the study of a realization of the quadratic variation matrix, and thus it is necessary that we do not get a realization of null quadratic variation from a non-null quadratic variation process.
\end{remark}

\

\noindent \textbf{Example:} One typical example of semimartingale which satisfies Assumption \ref{A2} is given by the $2d$-Heston model $(M^i,V^i); i=1,\ldots, d$ with correlation in $[-1,1]$ where $V^i$ denotes the $i$th square-root-type stochastic volatility component for $i=1,\ldots, d$. Then, one can easily check that for every $t\in (0,T]$, we have

$$[M^i , M^i]_t = \int_0^t|M^i_s|^2V^i_sds > 0 ~a.s ~\text{for every}~i=1,\ldots, d.$$
Hence, the classical Heston model satisfies Assumption \ref{A2}.

\

Let $\mathcal{M}_t := span\{M^1,\ldots,M^d\}$ be the linear space spanned by the $1$-dimensional measurable processes $M^1,\ldots,M^d$ over $[0,t]$ for $0 \le t\le T$. Assumption \ref{A1} yields $\dim\mathcal{M}_t = d$ for every $t\in (0,T]$. Let us now split $\mathcal{M}_t$ into two orthogonal subspaces. At first, we set

\begin{equation}\label{Dt}
\mathcal{D}_t := \{X\in\mathcal{M}_t; [X]_t = 0 ~a.s\}.
\end{equation}
Observe that $\mathcal{D}_t$ is a well-defined linear subspace of $\mathcal{M}_t$ for every $t\in [0,T]$. More importantly, the following remark holds.

\begin{remark}
We recall that any continuous bounded variation local martingale must be constant a.s. Moreover, for every $t\in (0,T]$

$$\{\omega; [Y,Y]_t(\omega)=0\} = \{\omega\in \Omega; N_\cdot(\omega)=0~\text{over the interval}~[0,t]\}$$
where $N$ is the local martingale component of the special semimartingale decomposition of some $Y\in\mathcal{S}$. Therefore, Assumption \ref{A2} allows us to state that if $M\in \mathcal{S}^d$ is a truly $d$-dimensional process, then $\mathcal{D}_t$ is a subspace of $\mathcal{M}_t$ only constituted by continuous bounded variation adapted processes over $[0,t]$.
\end{remark}

\begin{definition}
Let $\mathcal{M}_t$ be the span generated by a truly $d$-dimensional measurable process $M\in \mathcal{X}^d$ over $[0,t]$. If $\dim\mathcal{D}_t >0$, then we say that $\mathcal{M}_t$ has a \textbf{null quadratic variation component} over the interval $[0,t]$. In particular, if $M\in \mathcal{S}^d$ and $\dim\mathcal{D}_t >0$, then we say that $\mathcal{M}_t$ has a \textbf{bounded variation component} over $[0,t]$.
\end{definition}
Let us give a toy example showing how a non-trivial dimension induced by bounded variation processes may appear in a very simple context.

\

\noindent \textbf{Example:} Let $B$ be a one-dimensional Brownian motion and let $M_t = (B_t, B_t + t); 0\le t\le T.$ Of course, $M$ is a truly $2$-dimensional semimartingale where $dim~\mathcal{M}_t=2$ for every $t\in (0,T]$. In particular, we clearly have $dim~\mathcal{D}_t=1$ for every $t\in (0,T]$.

\

For a deeper discussion of bounded variation components on semimartingale systems, we refer the reader to Section \ref{mainexamples}. Let us now provide a natural notion of ``quadratic variation dimension'' in $\mathcal{M}_t$. To do so, let us consider the following quotient space

\begin{equation}\label{quot}
\widetilde{\mathcal{M}}_t := \mathcal{M}_t/\mathcal{D}_t;~0 \le t\le T.
\end{equation}
By definition, $\widetilde{\mathcal{M}}_t$ can be identified by $(\mathcal{M}_t, \sim)$ where the equivalence relation is given by

\begin{equation}\label{eqrel}
X\sim Y \Leftrightarrow X - Y~\text{is a null quadratic variation process in}~\mathcal{M}_t~\text{over the interval}~[0,t].
\end{equation}

The following simple result connects the rank of $[M]_t$ with the dimension of $\widetilde{\mathcal{M}}_t$.

\begin{lemma}\label{WZ}
Let $M\in \mathcal{X}^d$ be a truly $d$-dimensional measurable process satisfying Assumption \ref{A2}. Then,
$$rank [M]_t = \dim~\widetilde{\mathcal{M}}_t~a.s$$
for $t\in [0,T]$.
\end{lemma}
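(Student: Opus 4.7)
The plan is to exploit the linear isomorphism $\Phi: \mathbb{R}^d \to \mathcal{M}_t$ given by $c \mapsto Y^c := \sum_{j=1}^d c_j M^j$, which is well-defined and injective by Assumption \ref{A1}. Under this identification, the bilinear form on $\mathcal{M}_t$ induced by the quadratic covariation agrees with the quadratic form on $\mathbb{R}^d$ defined by $[M]_t$, since $[Y^c, Y^{c'}]_t = c^\top [M]_t c'$. In particular, because $[M]_t(\omega) \in \mathbb{M}^+_{d\times d}$, we have $c^\top [M]_t(\omega) c = 0$ if and only if $[M]_t(\omega) c = 0$, a consequence of positive semi-definiteness that will be used repeatedly.

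Using this, I would characterize $\mathcal{D}_t$ as follows: $Y^c \in \mathcal{D}_t$ iff $\mathbb{P}([Y^c]_t = 0) = 1$ iff $\mathbb{P}([M]_t c = 0) = 1$. Setting $K^* := \{c \in \mathbb{R}^d : \mathbb{P}([M]_t c = 0) = 1\}$, which is a deterministic linear subspace of $\mathbb{R}^d$, the map $\Phi$ sends $K^*$ bijectively onto $\mathcal{D}_t$, so that $\dim \mathcal{D}_t = \dim K^*$ and therefore $\dim \widetilde{\mathcal{M}}_t = d - \dim K^*$. The lemma thus reduces to proving $\mathrm{rank}\,[M]_t = d - \dim K^*$ almost surely, i.e.\ $\ker [M]_t(\omega) = K^*$ for a.e.\ $\omega$.

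The inclusion $K^* \subseteq \ker [M]_t$ a.s.\ (giving $\mathrm{rank}\,[M]_t \le d - \dim K^*$ a.s.) is cheap: pick a basis $c^{(1)}, \ldots, c^{(k)}$ of $K^*$ and intersect the $k$ full-measure events $\{[M]_t c^{(j)} = 0\}$ to obtain a single full-measure event on which $K^* \subseteq \ker [M]_t$, whence $\mathrm{rank}\,[M]_t = d - \dim \ker [M]_t \le d - \dim K^*$.

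For the reverse inequality I would invoke Assumption \ref{A2} in its natural basis-invariant reading: for each $c \in \mathbb{R}^d$ the one-dimensional process $Y^c$ inherits the zero-one property, so $\mathbb{P}([Y^c]_t > 0) \in \{0, 1\}$, and hence for every deterministic $c \notin K^*$ one has $\mathbb{P}([Y^c]_t = 0) = 0$. Fixing a deterministic complement $L$ of $K^*$ in $\mathbb{R}^d$, it remains to show that the restriction $Q(\omega)$ of $[M]_t(\omega)$ to $L$ is positive definite a.s. The main obstacle — the step I expect to absorb most of the effort — is passing from the pointwise-in-$c$ statement ``for each deterministic $c \in L \setminus \{0\}$, $c^\top [M]_t c > 0$ a.s.'' to the uniform statement ``a.s., $c^\top [M]_t c > 0$ for all $c \in L \setminus \{0\}$'', since $L \setminus \{0\}$ is uncountable and a straightforward countable-union argument fails. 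I would attack this either by a Gaussian-elimination argument on $Q$, expressing each leading principal minor of $Q$ in a suitable basis of $L$ as the quadratic variation of a deterministic linear combination of the $M^j$ and applying Assumption \ref{A2} inductively, or by first quotienting $M$ by a semimartingale representative of $\mathcal{D}_t$ to reduce to the case $K^* = \{0\}$, where the target statement becomes $[M]_t \succ 0$ a.s.
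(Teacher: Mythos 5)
Your reduction is correct and is essentially dual to the paper's: identifying $\mathcal{D}_t$ with $\Phi(K^*)$ where $K^* = \{c\in\mathbb{R}^d : \mathbb{P}([M]_t c = 0) = 1\}$, and reducing the lemma to $\ker[M]_t = K^*$ a.s., replaces the paper's quotient-space/Gram-determinant formulation without loss. The inclusion $K^*\subseteq\ker[M]_t$ a.s.\ is, as you say, a finite intersection of full-measure events. And you place the real difficulty exactly where it is: upgrading ``for each fixed $c\notin K^*$, $c^\top[M]_t c>0$ a.s.'' to ``a.s., $c^\top[M]_t c>0$ for all $c\notin K^*$''. The paper's own proof jumps over the same spot, asserting without justification that linear independence of the equivalence classes (a deterministic-coefficient criterion) is ``in other words'' $\det\big([M^{\sigma(i)},M^{\sigma(j)}]_t\big)\neq 0$ a.s.

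Neither of your proposed closings works, and in fact the gap cannot be closed from the stated hypotheses. The Gaussian-elimination idea breaks because the leading principal minors of the restriction $Q$ of $[M]_t$ to $L$ are Gram determinants whose pivot factorization $\det Q_{[k]} = \prod_{j\le k}\|\pi_j e_j\|_Q^2$ involves Gram--Schmidt projections whose coefficients depend on $\omega$; these pivots are therefore \emph{not} quadratic variations of deterministic linear combinations of the $M^j$, and Assumption~\ref{A2} (even in its basis-invariant reading) gives no $0$--$1$ law for them. Quotienting by $\mathcal{D}_t$ just renormalizes to the case $K^*=\{0\}$ and leaves the uncountability obstacle untouched. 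To see the obstacle is intrinsic rather than technical, take $\Theta$ uniform on $[0,\pi)$ and $\mathcal{F}_0$-measurable, $B$ a Brownian motion, and $M_t = (\cos\Theta\, B_t,\ \sin\Theta\, B_t)$. Then $M$ is truly $2$-dimensional and satisfies even the basis-invariant version of Assumption~\ref{A2}: for every deterministic $c\neq 0$, $[c^\top M]_t = t\,(c_1\cos\Theta + c_2\sin\Theta)^2 >0$ a.s., so $K^*=\{0\}$, $\mathcal{D}_t=\{0\}$ and $\dim\widetilde{\mathcal{M}}_t = 2$; yet $[M]_t = t\,vv^\top$ with $v=(\cos\Theta,\sin\Theta)^\top$ has rank $1$ a.s.\ for all $t>0$. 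The statement therefore needs an additional assumption (e.g.\ that $\ker[M]_t$, or equivalently each principal minor, is a.s.\ deterministic --- ruling out loading directions randomized over $\mathcal{F}_0$), and not merely a cleverer argument. As a minor further point, the ``natural basis-invariant reading'' of Assumption~\ref{A2} that you invoke is not implied by the literal assumption, which constrains only the coordinate processes $M^i$ (take $M^1 = B^1$, $M^2 = B^1\mathbf{1}_A + B^2\mathbf{1}_{A^c}$ with $A\in\mathcal{F}_0$ of intermediate probability); if you rely on it, it should be stated explicitly.
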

\begin{proof}
The result for $t=0$ is obvious so we fix $0 < t \le T$. Let $p_t = \dim~\widetilde{\mathcal{M}}_t$ and let $\pi_{\mathcal{D}_t}:\mathcal{M}_t\to\widetilde{\mathcal{M}}_t$ be the standard projection of $\mathcal{M}_t$ onto $\widetilde{\mathcal{M}}_t$. Now, we observe that for each $P\in\mathcal{M}_t$, $\pi_{\mathcal{D}_t}(P)$ is a set of continuous measurable processes, each of which differs from each other by a continuous null quadratic variation measurable process over the interval $[0,t]$. Nevertheless, for each process in $\pi_{\mathcal{D}_t}(P)$ its quadratic variation is equal to $[P]_t$. Therefore, we may define its quadratic variation as $[P]_t$. By the polarization identity, we may define

\begin{equation}\label{polar}
[\pi_{\mathcal{D}_t}(P),\pi_{\mathcal{D}_t}(Q)]_t:=[P,Q]_t
\end{equation}
for any $P,Q\in \mathcal{M}_t$. In particular, this shows that $[N,Z]_t$ is a well-defined random variable for any $N,Z\in \widetilde{\mathcal{M}}_t$.

Since $\text{span}~\{\pi_{\mathcal{D}_t}(M^1),\ldots,\pi_{\mathcal{D}_t}(M^d)\} = \widetilde{\mathcal{M}}_t$, then $\pi_{\mathcal{D}_t}(M^1),\ldots,\pi_{\mathcal{D}_t}(M^d)$ contains $p_t$ linearly independent components in the vector space $\widetilde{\mathcal{M}}_t$. Therefore, $ \dim~\widetilde{\mathcal{M}}_t$ equals to the number of linearly independent components in the subset $\{\pi_{\mathcal{D}_t}(M^1),\ldots,\pi_{\mathcal{D}_t}(M^d)\}$.

Let us now consider a subset of $k$ equivalence classes $\pi_{\mathcal{D}_t}(M^{\sigma(1)}),\ldots,\pi_{\mathcal{D}_t}(M^{\sigma(k)})$, where $\sigma:\{1,\ldots,k\}\to\{1,\ldots,d\}$
is a function. Let $c_1,\ldots,c_k\in\mathbb{R}$. In the sequel, we denote by $\overrightarrow{0}$ the null element of $\widetilde{\mathcal{M}}_t$. With this notation at hand, Cauchy-Schwartz inequality yields

 $$\sum_{i=1}^k c_i \pi_{\mathcal{D}_t}(M^{\sigma(i)})=\overrightarrow{0}\Leftrightarrow \forall N\in\widetilde{\mathcal{M}}_t, \left[\sum_{i=1}^k c_i \pi_{\mathcal{D}_t}(M^{\sigma(i)}),N\right]_t=0~a.s.$$
 In particular,
 $$\sum_{i=1}^k c_i \pi_{\mathcal{D}_t}(M^{\sigma(i)})=\overrightarrow{0}\Leftrightarrow \forall j=1,\ldots,k, \left[\sum_{i=1}^k c_i \pi_{\mathcal{D}_t}(M^{\sigma(i)}),\pi_{\mathcal{D}_t}(M^{\sigma(j)})\right]_t=0~a.s.$$

By recalling that $\{\pi_{\mathcal{D}_t}(M^{\sigma(1)}),\ldots,\pi_{\mathcal{D}_t}(M^{\sigma(k)})\}$ is linearly independent if, and only if,
 $$\sum_{i=1}^k c_i \pi_{\mathcal{D}_t}(M^{\sigma(i)})=\overrightarrow{0} \Rightarrow c_1=\cdots =c_k=0,$$
then the statement $\{\pi_{\mathcal{D}_t}(M^{\sigma(1)}),\ldots,\pi_{\mathcal{D}_t}(M^{\sigma(k)})\}$ is a linearly independent set is equivalent to the system of equations
 $$\sum_{i=1}^k c_i \left[\pi_{\mathcal{D}_t}(M^{\sigma(i)}),\pi_{\mathcal{D}_t}(M^{\sigma(j)})\right]_t=0~a.s,\qquad j=1,\ldots,k$$
has only the trivial solution $c_1=\cdots=c_k=0$ almost surely. In other words,

\begin{equation}\label{polar1}
\det\Bigg(\left[\pi_{\mathcal{D}_t}(M^{\sigma(i)}),\pi_{\mathcal{D}_t}(M^{\sigma(j)})\right]_t; i,j=1,\ldots, k\Bigg)\neq 0~a.s.
\end{equation}
From (\ref{polar}), (\ref{polar1}) and Assumption \ref{A2}, we shall conclude the proof.
\end{proof}

Summing up the results of this section, we arrive at the following direct sum

\begin{equation}\label{sp1}
\mathcal{M}_t = \mathcal{W}_t \oplus \mathcal{D}_t ; 0\le t \le T,
\end{equation}
where $\{\mathcal{W}_t; 0\le t \le T\}$ is the unique (up to isomorphisms) family of complementary linear subspaces of $\mathcal{M}_t$ which realizes (\ref{sp1}). One should notice that $\mathcal{W}_t$ is formed by the null process in $\mathcal{X}$ on $[0,t]$ and of elements $V$ in $\mathcal{M}_t$ such that $[V,V]_t >0~a.s$. Of course, $\mathcal{W}_t$ is isomorphic to $\widetilde{\mathcal{M}}_t$ for every $t\in [0,T]$. To shorten notation, in the remainder of this article, we write $\mathcal{M} := \mathcal{M}_T, \mathcal{W} := \mathcal{W}_T$, $\widetilde{\mathcal{M}}:=\widetilde{\mathcal{M}}_T$ and $\mathcal{D} := \mathcal{D}_T$.

\section{Random directions and principal components}\label{randomdirections}
Let us start with some heuristics related to reduction dimension for a high-dimensional vector of semimartingales $M = (M^1,\dots, M^d)\in \mathcal{S}^d$ which we suspect there may be some redundancy in the sense of quadratic variation. Perhaps there may be some way to combine $M^1,\ldots,M^d$ that captures much of the quadratic variation in a few aggregate semimartingales. In particular, we shall seek random variables $v_t=(v^1_t, \ldots, v^d_t)\in \mathbb{L}^{0,d}_t$
such that

\begin{equation}\label{first}
S_t:=\sum_{j=1}^d v^j_t M^j_t
\end{equation}
has the largest possible \textit{instantaneous} quadratic variation over $[0,t]$, where $v_t=(v^1_t, \ldots, v^d_t)$ in (\ref{first}) is interpreted as a \textit{random coefficient} at time $t\in[0,T] $ rather than a process. In other words, we seek a random linear combination of the form (\ref{first}) such that

$$\sum_{i,j=1}^dv^i_tv^j_t[M^i,M^j]_t$$
has almost surely the largest possible value over the subset of $\mathbb{L}^{0,d}_t$ with Euclidean norm 1 for a given $t\in [0,T]$. Indeed, we do compute the quadratic variation of the linear combination $S$ at time $t$ by considering $v_t$ as a random constant over $[0,t]$ which yields

$$\sum_{i,j=1}^dv^i_tv^j_t[M^i,M^j]_t = [S,S]_t.
$$

The random coefficient
$$\bar{v}_t = \underset{ w_t\in \mathbb{L}^{0,d}_t, \|w_t\|_{\mathbb{R}^d}=1}{\operatorname{argmax}}\Bigg[ \sum_{i=1}^d w^i_t M^i, \sum_{i=1}^d w^i_t M^i \Bigg]_t$$
encodes the way to combine $M^1, \ldots, M^d$ to maximize instantaneous quadratic variation at time $t\in [0,T]$. The new variable - the leading principal component - is $\sum_{i=1} ^d\bar{v}^i_tM^i_t$. We shall continue this strategy by seeking a possible lower dimensional pairwise orthogonal sequence of aggregate variables which might explain most of the quadratic variation at each time $t\in [0,T]$.

For simplicity of exposition, we assume that one observes all trajectories of a given truly $d$-dimensional continuous time semimartingale $M$ satisfying Assumptions \ref{A1} and \ref{A2}. Let us now interpret the eigenvalues and eigenvectors of the quadratic variation matrix in a similar
manner of what we interpret in covariance matrices as in classical PCA. In the sequel, we introduce the brackets which encode quadratic variation of random linear combinations as described at the beginning of this section

\begin{equation}\label{rqf}
\langle X^\top_t Y\rangle_t:=\sum_{i,j=1}^dX^i_tX^j_t[Y^i,Y^j]_t;~X_t\in \mathbb{L}^{0,d}_t, Y\in \mathcal{S}^d
\end{equation}
The bracket $\langle X^\top_t Y, P^\top_t R \rangle_t $ is naturally defined by polarization. The bracket given in (\ref{rqf}) encodes the quadratic variation of $X^\top_t Y$ at time $t\in [0,T]$ where $X_t$ is considered as a random constant over $[0,t]$ in the computation of (\ref{rqf}). This is perfectly consistent to what happens in practice because at a given time $t\in [0,T]$, one observes a high-frequency data from a semimartingale $M$ over $[0,t]$ and one has to decide if there exist linear combinations of the elements of $M_t$ which summarizes the quadratic variation $[M]_t$.



\begin{lemma}\label{eigenvatheorem}
Let $M\in \mathcal{S}^d$ be a truly $d$-dimensional semimartingale satisfying Assumption 2.2. Let us consider the vector of eigenvalues $\lambda^1_t(\omega),\ldots,\lambda^d_t(\omega)$ (ordered in such way that $\lambda^1_t(\omega)\ge \lambda^2_t(\omega)\ge \ldots \ge \lambda^d_t(\omega)$) of the matrix $[M]_t(\omega)$ for each $(\omega,t)\in \Omega\times [0,T]$. Then, for each $i$, $\{\lambda^i_t; 0\le t\le T\}$ is an adapted bounded variation process.
\end{lemma}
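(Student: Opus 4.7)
The plan is to reduce the statement to two structural facts: (a) $\{[M]_t; 0\le t\le T\}$ is monotone non-decreasing in the positive semidefinite order, and (b) the ordered eigenvalue map is a continuous, monotone functional of a symmetric matrix with respect to that order. Combining (a) and (b) will immediately yield that each $\lambda^i_t$ is non-decreasing, and hence of bounded variation.

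First I would verify positive semidefinite monotonicity. For any $u=(u^1,\ldots,u^d)\in\mathbb{R}^d$, set $N:=\sum_{i=1}^d u^i M^i \in \mathcal{S}$. By bilinearity of the quadratic covariation,
$$u^\top [M]_t u = \sum_{i,j=1}^d u^i u^j [M^i,M^j]_t = [N,N]_t,$$
and since $t\mapsto [N,N]_t$ is non-decreasing for any continuous scalar semimartingale, it follows that $u^\top([M]_t-[M]_s)u\ge 0$ for every $0\le s\le t\le T$ and every $u$. Hence $[M]_t-[M]_s\in\mathbb{M}^+_{d\times d}$ almost surely.

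Next, I would invoke the Courant-Fischer min-max representation to express each ordered eigenvalue of a symmetric matrix $A$ as
$$\lambda^i(A)=\min_{V\subset\mathbb{R}^d,\,\dim V=d-i+1}\ \max_{v\in V,\,\|v\|=1} v^\top A v.$$
Standard arguments show that $A\mapsto \lambda^i(A)$ is continuous (indeed $1$-Lipschitz) in the operator norm, so $\lambda^i_t=\lambda^i([M]_t)$ inherits $\mathcal{F}_t$-measurability from the adaptedness of the matrix-valued process $t\mapsto[M]_t$, whose entries are $\mathcal{F}_t$-measurable by hypothesis. Weyl's monotonicity principle, a direct consequence of the same min-max formula, then guarantees that $A\preceq B$ in the positive semidefinite order implies $\lambda^i(A)\le\lambda^i(B)$ for every $i$.

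Applying Weyl's principle to $A=[M]_s$ and $B=[M]_t$ for $s\le t$ yields $\lambda^i_s\le\lambda^i_t$ almost surely. Consequently, each $\{\lambda^i_t;0\le t\le T\}$ is an adapted non-decreasing process, in particular of bounded variation on $[0,T]$ (with total variation bounded by $\lambda^i_T\le \mathrm{tr}[M]_T=\sum_{j=1}^d[M^j,M^j]_T<\infty$ a.s.). No serious obstacle is anticipated; the only point requiring a little care is the continuous/measurable selection of ordered eigenvalues, which is handled uniformly by the min-max formula without needing eigenvectors.
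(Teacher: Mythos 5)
Your proof is correct, and it takes a genuinely different route from the paper's. The paper proves adaptedness via the characteristic polynomial and then invokes Weyl's perturbation theorem (the Lipschitz estimate $\max_j|\lambda^j_t-\lambda^j_s|\le C\|[M]_t-[M]_s\|_\infty$) together with the bounded variation of the entries $[M^i,M^j]$. You instead observe that $[M]_t-[M]_s$ is positive semidefinite for $s\le t$ (because $u^\top[M]_tu=[u^\top M,u^\top M]_t$ is the non-decreasing quadratic variation of a scalar continuous semimartingale), and then apply Weyl's \emph{monotonicity} principle from the Courant--Fischer characterization. Your argument buys something the paper's does not state: each ordered eigenvalue process $t\mapsto\lambda^i_t$ is not merely of bounded variation but monotone non-decreasing with $\lambda^i_0=0$, so its total variation on $[0,T]$ is exactly $\lambda^i_T\le\mathrm{tr}\,[M]_T<\infty$ a.s. It also bypasses any appeal to a perturbation-theoretic constant. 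The paper's Lipschitz route, by contrast, would apply even in settings where the matrix process has bounded variation but is not monotone in the Loewner order, so it is slightly more robust as a template; here, however, the monotone structure is available and your argument exploits it cleanly. Both treatments of adaptedness (characteristic polynomial vs.\ continuity of the ordered spectrum via min-max) are standard and equivalent.
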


\begin{proof}
By the very definition, any eigenvalue $\lambda_t(\omega)$ is a root of the characteristic polynomial $p(\lambda)=~det (\lambda I - [M]_t(\omega))$ of the random matrix $[M]_t(\omega)$. The degree of this polynomial is $d$ and its coefficients depend on the entries of $[M]_t(\omega)$, except that its term of degree $d$ is always $(-1)^d\lambda^d$. This allows us to conclude that the ordered eigenvalues are $\mathbb{F}$-adapted. In particular, by the classical Weyl's perturbation theorem, we know there exists a deterministic constant $C$ such that

$$max_{j}|\lambda^j_t(\omega) - \lambda^j_s(\omega)|\le C \|[M]_t(\omega) - [M]_s(\omega)\|_{\infty}; (\omega,t)\in \Omega\times [0,T]$$
where $\|\cdot\|_{\infty}$ denotes the entrywise $\infty$-norm of a symmetric matrix. By writing $\|[M]_t - [M]_s\|_\infty = max_{1\le j\le d}\sum_{i=1}^d|[M^i,M^j]_t(\omega) - [M^i,M^j]_s(\omega)|$, we clearly see $t\mapsto \lambda^j_t(\omega)$ has bounded variation for almost all $\omega\in \Omega$.
\end{proof}

We are now able to summarize our discussion with the following result.
\begin{proposition}\label{ranking}
Let $M$ be a semimartingale satisfying Assumptions \ref{A1} and \ref{A2}. For a given $t\in [0,T]$, let $(\lambda^1_t, \ldots, \lambda^d_t)$ be the list of eigenvalues of $[M]_t$ (arranged in decreasing order) and let $(v^1_t,\ldots, v^d_t)$ be an associated set of eigenvectors. Then, for every $t\in [0,T]$

$$\langle(v^1_t)^\top M\rangle_t = \max_{\substack{X_t\in \mathbb{L}^{0,d}_t},\\ \|X_t\|_{\mathbb{R}^d}=1}\langle X^\top_t M\rangle_t = \lambda^1_t~a.s$$

$$\langle (v^k_t)^\top M\rangle_t = \max_{\substack{X_t\in \mathcal{V}^k_t},\\ \|X_t\|_{\mathbb{R}^d}=1}\langle X^\top_t M\rangle_t = \lambda^k_t~a.s;~k=2,\ldots, d$$
where $\mathcal{V}^k_t:=~\text{orthogonal complement of}~span ~\{v^1_t, \ldots, v^{k-1}_t\}$ in $\mathbb{R}^d$ for $k=2,\ldots, d$.
In addition, if $t\mapsto [M]_t$ is a generic\footnote{For a continuous real-valued function $f$ defined in a neighborhood of $t_0$, the order of flatness $m_{t_0}(f)$ at $t_0$ is defined by the supremum of all integers $p$ such that $f(t) = (t-t_0)^p g(t)$ near $t_0$ for a continuous function $g$. We say that two functions $f$ and $h$ meet of order $\ge p$ at $t_0$ when $m_{t_0}(f - h) \ge  p$. Let $A(t); 0\le t\le T$ be a parameterized family of self-adjoint matrices. We say that the curve $t \mapsto A(t)$ is generic, if no two of continuously parameterized eigenvalues meet of infinite order at any $t\in [0,T]$ if they are not equal for all $t$. We refer the reader to e.g Rutter \cite{rutter} and Alekseevsky, Kriegl, Losik, and Michor \cite{alek} for further details.} smooth curve a.s and $dim~\widetilde{\mathcal{M}}_t=p$ is a.s constant over the time interval $(0,T]$, then there exists a choice of adapted eigenvector processes $(v^1,\ldots, v^d)$ over $[0,T]$ such that


$$S^i_t:=(v^i_t)^\top M_t; 0\le t\le T,$$
is a semimartingale for each $i\in \{1,\ldots, p\}$.

\end{proposition}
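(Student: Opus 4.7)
The plan is to establish the variational identities by pointwise reduction to the Rayleigh-Ritz theorem, and then obtain the semimartingale claim by combining a smooth parametrization of eigenvectors with an integration-by-parts argument.

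For the variational part I would first observe that the bracket defined in (\ref{rqf}) is the quadratic form
$$
\langle X_t^\top M\rangle_t = X_t^\top [M]_t X_t,
$$
so for each fixed $(\omega,t)$ the problem reduces to maximizing $x^\top A x$ over unit vectors $x\in\mathbb{R}^d$ with $A=[M]_t(\omega)\in\mathbb{M}_{d\times d}^+$. The classical Rayleigh-Ritz theorem gives $\lambda^1_t(\omega)$ at a unit top eigenvector, and iteratively $\lambda^k_t(\omega)$ on the orthogonal complement $\mathcal{V}^k_t$. Enlarging the admissible set from $\mathbb{R}^d$ to $\mathbb{L}^{0,d}_t$ cannot increase the maximum because the deterministic bound is attained $\omega$-wise, so it only remains to exhibit an $\mathcal{F}_t$-measurable maximizer. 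I would do this by a measurable-selection argument: the entries of $[M]_t$ are $\mathcal{F}_t$-measurable, hence so are the coefficients of its characteristic polynomial and each eigenvalue $\lambda^k_t$, which makes $\omega\mapsto\ker\bigl([M]_t(\omega)-\lambda^k_t(\omega) I\bigr)$ a closed-valued $\mathcal{F}_t$-measurable multifunction; the Kuratowski-Ryll-Nardzewski theorem then provides an $\mathcal{F}_t$-measurable unit selection $v^k_t$ inside the orthogonal complement of $\{v^1_t,\ldots,v^{k-1}_t\}$.

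For the semimartingale claim I would invoke the smooth-parametrization theorem of Alekseevsky, Kriegl, Losik and Michor referenced in the footnote. Under genericity, for a.e.\ $\omega$ the smooth curve $t\mapsto [M]_t(\omega)$ admits a smooth parametrization $\bigl(\mu^i(t,\omega),w^i(t,\omega)\bigr)_{i=1}^d$ of eigenvalues and orthonormal eigenvectors, possibly differing from the ordered parametrization at isolated crossings. Because $\dim\widetilde{\mathcal{M}}_t\equiv p$ almost surely, exactly $p$ of the smooth eigenvalue curves stay strictly positive on $(0,T]$; I would relabel the associated smooth eigenvectors as $v^1,\ldots,v^p$ and complete with a smooth orthonormal basis of $\ker[M]_t$. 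Adaptedness is established pathwise by the implicit function theorem on the open random set where the smooth eigenvalues are simple (the eigenvectors are there locally analytic functions of the entries of $[M]$, hence $\mathbb{F}$-adapted) and by continuous extension at the discrete crossing times. Each $v^i$ is then a continuous bounded variation adapted process, and an integration-by-parts argument yields
$$
(v^i_t)^\top M_t = (v^i_0)^\top M_0 + \int_0^t (v^i_s)^\top dM_s + \int_0^t M_s^\top\, dv^i_s,
$$
with no bracket correction since $v^i$ has zero quadratic variation. The first integral is a stochastic integral of a locally bounded adapted process against the semimartingale $M$ and the second a Lebesgue-Stieltjes integral of bounded variation, so $S^i$ is a semimartingale for every $i=1,\ldots,p$.

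The main obstacle I expect is reconciling the bounded variation property and the $\mathbb{F}$-adaptedness of $v^i$ at crossing times. Away from crossings the implicit function theorem handles both properties simultaneously, but at a multiple eigenvalue the Alekseevsky-Kriegl-Losik-Michor selection is a priori defined in terms of the whole trajectory of $[M]$, whereas adaptedness demands a choice depending only on the values of $[M]_s$ for $s\le t$. The genericity hypothesis together with the constancy of $\dim\widetilde{\mathcal{M}}_t$ is precisely what makes this reconciliation feasible: the set of crossing times of the top-$p$ eigenvalues is discrete along almost every trajectory, so the smooth and ordered parametrizations agree outside a negligible random set and the residual ambiguity at crossings reduces to a finite sign-or-rotation choice within a constant-dimensional eigenspace, which can be fixed measurably along the filtration.
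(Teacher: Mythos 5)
Your proof is correct and follows essentially the same route as the paper: a pointwise reduction of $\langle X_t^\top M\rangle_t = X_t^\top [M]_t X_t$ to the Rayleigh--Ritz characterization of eigenvalues on the non-negative definite matrix $[M]_t(\omega)$, followed by the Alekseevsky--Kriegl--Losik--Michor smooth eigenvector parametrization under genericity and an integration-by-parts argument exploiting the bounded variation of the eigenvector processes. The only cosmetic difference is the mechanism for obtaining a measurable/adapted eigenvector selection (you invoke Kuratowski--Ryll-Nardzewski and a continuous-extension argument at crossings, the paper invokes Gaussian elimination on $[M]_t - \lambda^i_t I$ together with the adaptedness of the eigenvalues from Lemma \ref{eigenvatheorem}); your discussion of the adaptedness obstacle at multiple eigenvalues is a more candid account of a subtlety the paper disposes of with ``one can readily see.''
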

\begin{proof}
Fix a realization $\omega\in\Omega$ and $t\in [0,T]$. Let $A = (a_{ij})$ be a $d\times d$ matrix with entries given by
$$a_{ij} = [M^i,M^j]_t(\omega);~i,j=1\ldots,d.$$
It follows from Assumptions \ref{A1} and \ref{A2} that $A$ is a non-negative definite matrix. Now, let us take $v_t\in \mathbb{L}_t^{0,d}$, and let $z = (z_1,\ldots,z_d) \in \mathbb{R}^d$
be given by $z_i = v_t^i(\omega)$. Then,

$$
\langle z,Az\rangle_{\mathbb{R}^d}= \langle v_t^\top M\rangle_t.
$$
Now, the variational characterization of eigenvalues follows from standard arguments on quadratic forms over $\mathbb{R}^n$ for each $(\omega,t)\in \Omega\times [0,T]$. For the second part, if $t\mapsto [M]_t(\omega)$ is $C^\infty$ then from Theorem 7.6 in Alekseevsky et al~\cite{alek}, one can choose smooth versions for related eigenvectors $v^1(\omega), \ldots, v^d(\omega)$ with bounded variation paths. By Gaussian elimination and Lemma \ref{eigenvatheorem}, one can readily see that one can choose it in such way that $(v^1,\ldots, v^d)$ is a $d$-dimensional adapted process. The usual integration by parts for stochastic integrals allows us to state that $S = (S^1,\dots, S^p)$ is a semimartingale.
\end{proof}

Similar to the classical PCA methodology based on covariance matrices, Proposition \ref{ranking} yields a dimension reduction based on quadratic variation rather than covariance as follows. Let $M$ be a truly $d$-dimensional semimartingale satisfying Assumption \ref{A2} and let us assume that one observes $[M]_t(\omega)$ for a given $(\omega,t)\in \Omega\times (0,T]$. Summing up the above results, we shall reduce dimensionality as follows

\begin{equation}\label{PCArank}
S^i_t= \sum_{j=1}^d v^{ij}_t M^j_t;i=1, \ldots, dim~\widetilde {\mathcal{M}}_t,~0 < t \le T.
\end{equation}
At this point it is pertinent to make some remarks about (\ref{PCArank}). At first, the assumption in Proposition \ref{ranking} that $dim~\widetilde{\mathcal{M}}_t=p$ is constant a.s over $(0,T]$ holds in typical cases found in practice.


\begin{remark}
In order to get semimartingale principal components, the assumption that $t\mapsto [M]_t$ is generic cannot be avoided. See e.g example 7.7 in Alekseevsky et al \cite{alek}. However, one should notice that if two eigenvalues meet at an infinite order at a time $t_0$, then all derivatives at this point must coincide. \end{remark}

By the very definition, $\lambda^1_t\ge \lambda^2_t\ge \ldots \ge \lambda^d_t\ge 0~a.s~\text{for every}~t\in[0,T]$ which means that $S^i$ presents the $i$th largest quadratic variation among $\{S^1, \ldots,S^p\}$. One should notice that the principal components are orthogonal in the sense

$$\langle v^i_t, [M]_tv^j_t\rangle_{\mathbb{R}^d} = \langle (v^i_t)^\top M, (v^j_t)^\top M\rangle_t = 0~a.s; 0 \le t\le T, i\neq j$$
where $S^i_\cdot = (v^i_\cdot)^\top M_\cdot, S^j_\cdot = (v^j_\cdot)^\top M_\cdot$ for $i\neq j$. Moreover, the $i$-th eigenvector $v^i_t$ must be interpreted as the random direction in $\mathbb{R}^d$ at time $t$ which maximizes $\Big[\sum_{j=1}^d a_j M^j,\sum_{j=1}^d a_j M^j\Big]_t$ over $a\in \mathcal{V}^i_t; \|a\|_{\mathbb{R}^d} = 1$.

\begin{remark}
We stress that
$$[S^i,S^i]_t \neq \langle (v^i)^\top_t M\rangle_t = \sum_{\ell,m=1}^d v^{i\ell}_tv^{im}_t[M^\ell,M^m]_t; 0\le t\le T, i=1\ldots, d$$
where $S^i_r = \sum_{j=1}^d v^{ij}_r M^j_r; 0\le r\le t, 1\le i\le d$. Therefore, our methodology is rather different from Ait-Sahalia and Xiu \cite{sahalia2}. In qualitative terms, our framework does not loose information in terms of the underlying quadratic variation space $\mathcal{W}$ (See Proposition \ref{diffusionmatrix}) and hence in terms of $\mathcal{M}$ as well. In addition, we do not require a simple eigenvalue structure as required in \cite{sahalia2}.
\end{remark}

Let us now briefly discuss the importance of the subspaces $(\mathcal{D},\mathcal{W})$ in concrete multi-dimensional semimartingale systems.

\section{Bounded variation component and quadratic variation in $\mathcal{M}$}\label{mainexamples}
In this section, we discuss two concrete examples of models which exemplify the importance of analyzing the principal components of high-dimensional semimartingale systems in terms of $(\mathcal{W}, \mathcal{D})$ rather than covariance matrices.

\subsection{Correlation in $d$-dimensional asset prices}
Correlation among asset prices is a well-known phenomena and it has been studied by many authors in the context of covariance and, more recently, quadratic variation matrices. Let us suppose the asset log-prices form a $d$-dimensional It\^o process

\begin{equation}\label{obsSM}
M^i_t = M^i_0 + \int_0^tb^i_sds + \sum_{j=1}^d\int_0^t \sigma^{ij}_sdB^j_s; ~i=1,\ldots, d; 0\le t\le T,
\end{equation}
where $b:[0,T]\times\Omega\rightarrow \mathbb{R}^d$ and $\sigma:[0,T]\times \Omega \rightarrow \mathbb{R}^{d\times d}$ satisfy usual conditions to get a well-defined $d$-dimensional semimartingale. For simplicity of exposition, let us assume that $d$ is known.

One typical example of the existence of bounded variation component in $\mathcal{M}$ is the occurrence of correlation among $M^1, \ldots, M^d$ which can be measured by volatility, i.e., quadratic variation. This type of phenomena has been recently studied by Ait-Sahalia and Xiu \cite{sahalia} who identify nontrivial correlation among $\{M^i; i=1,\ldots, d\}$ by means of suitable estimators $\widehat{[M^i, M^j]}_T; i,j=1, \ldots, d$. In the presence of correlation among assets as in \cite{sahalia}, the subspace $\mathcal{D}$ naturally emerges as a non-trivial subspace of $\mathcal{M}$ due to the fact that $rank~[M]_T< d$. See also Buraschi, Porchia and Trojani \cite{burashi} for a discussion of correlation in the context of optimal portfolio choice.

\subsection{Stochastic PDEs with finite-dimensional realizations}\label{expSPDE}
Let us describe how $(\mathcal{W},\mathcal{D})$ arises in the context of stochastic PDEs. Let us concentrate the discussion in one major research theme related to interest rate modelling: The calibration problem of Heath-Jarrow-Morton models~\cite{heath} (henceforth abbreviated by HJM) based on forward rate curves. We refer the reader to e.g~\cite{bjork,bjork1,bjork2,filipo} and other references therein for a detailed discussion on this issue. The classical HJM model can be described by a stochastic PDE of the form

\begin{equation}\label{bspde1}
dr_t = \Big(A(r_t) + \alpha_{HJM}(r_t)\Big)dt + \sum_{i=1}^m \sigma^i(r_t)dB_t^i; \quad r_0 \in E,
\end{equation}

\noindent where $A = \frac{d}{dx}$ is the first-order derivative operator acting as an infinitesimal generator of a $C_0$-semigroup on a separable Hilbert space $E$ which we assume to be a space of functions $g:\mathbb{R}_+\rightarrow \mathbb{R}$.
The drift vector field $\alpha_{HJM}$ has great importance for pricing and hedging derivative products and it is fully determined by $\sigma = \{\sigma^1, \ldots, \sigma^m\} $ under a martingale measure. See e.g~\cite{filipo} for more details.

One central issue in the literature is the use of the stochastic PDE (\ref{bspde1}) in practice. In this case, it is very important to know when (\ref{bspde1}) admits a finite-dimensional subset $\mathcal{G}$ where the stochastic PDE never leaves as long as the initial forward rate curve $r_0\in \mathcal{G}$, namely

$$
\mathbb{P}\{ r_t \in \mathcal{G};~\forall t \in [0,T] \}=1\quad \text{if}~r_0\in \mathcal{G}.
$$
The subset $\mathcal{G}$ can be interpreted as a finite-dimensional parameterized family of smooth curves $\mathcal{G} = \{G(\cdot; x); x\in \mathcal{Z}\subset \mathbb{R}^d\}\subset E$ which can be used to estimate the volatility component of the model (\ref{bspde1}) starting with an initial curve $r_0\in \mathcal{G}$. See e.g \cite{angeline,bjork1}. Therefore, one central issue in interest rate modelling is the existence, characterization and estimation of $\mathcal{G}$. See \cite{bjork,bjork1,bjork2,filipo,filipo1,filipo2,filipo3,ohashi,richter,angeline,mancino1} and other references therein.

As far as the existence is concerned, Bjork and Svensson \cite{bjork2} and Filipovic and Teichmann~\cite{filipo1} have shown that the existence of $\mathcal{G}$ is equivalent to

$$dim~\{\mu,\sigma^i;i=1,\ldots,m \}_{LA}< \infty,$$
in a neighborhood of $r_0$, where $\mu$ is the Stratonovich drift induced by $\sigma$ and $x\mapsto\{\mu,\sigma^1,\ldots,\sigma^m \}_{LA}(x)$ is the Lie algebra generated by the vector fields $\mu,\sigma^1,\ldots,\sigma^m$. In fact, $\mathcal{G}\subset E$ must be an affine submanifold of $E$.
In particular, there exists a parametrization $\phi:[0,T]\rightarrow E$, a truly $d$-dimensional Brownian semimartimgale $M = (M^1, \ldots, M^d)$ and a linear subspace $V=~\text{span}~\{\lambda_1, \ldots, \lambda_d\}$ spanned by a basis $\{\lambda_i\}_{i=1}^d$ such that

\begin{equation}\label{fdre}
r_t(x) = \phi_t(x) + \sum_{j=1}^d M^i_t \lambda_i(x)~a.s; 0\le t\le T; x\ge 0.
\end{equation}
Under some assumptions (see e.g Duffie and Khan \cite{duffie}), the semimartingale state process $M$ can be generically written as an affine process. In contrast to the previous example of sample data from the $d$-dimensional semimartingale (\ref{obsSM}), $M$ in (\ref{fdre}) is \textit{not} observed.

For a given pair $(M,V)$ as above, one can actually show there exists a unique splitting $V = V_1\oplus V_2$ which realizes

 $$r_t(x) = \phi_t(x) + \sum_{i=1}^p Y^i_t \varphi_i(x) +\sum_{j=p+1}^d \tilde{Y}^j_t \varphi_j(x) ~a.s$$
for $ 0\le t\le T; x\ge 0$. Here, $\{Y^i;i=1,\ldots, p\}$ is a basis for $\mathcal{W}$ and $\{\tilde{Y}^j; j=p+1, \ldots, d\}$ it is basis for $\mathcal{D}$ such that

$$\mathcal{M} = \mathcal{W}\oplus\mathcal{D}.$$
Moreover, $V_1 =~\text{span}~\{\varphi_1, \ldots, \varphi_p\}$ and $V_2=~\text{span}~\{\varphi_{p+1}, \ldots,\varphi_{d}\}$. The loading factors associated to $V_2$ are related to the risk factors in $\mathcal{D}$ which in turn are associated to no-arbitrage restrictions.

Under the assumption that a stochastic PDE (one typical example is (\ref{bspde1})) admits a finite-dimensional realization (\ref{fdre}), we are going to present consistent estimators for the minimal invariant subspace $V$. More precisely, based on high-frequency data and techniques from factor analysis, we take advantage of the structure induced by $(\mathcal{W},\mathcal{D})$ in order to provide consistent estimators $(\hat{V}_1, \hat{V}_2)$ for $(V_1,V_2)$ related to the minimal invariant subspace $V$.

\subsection{Noise dimension vs quadratic variation dimension}\label{diffusioncomp}
It is convenient to point out that the rank of a quadratic variation matrix is \textit{not} the maximal rank of the underlying volatility process studied by Jacod and Podolskij~\cite{jacod} and Fissler and Podolskij~\cite{fissler}. See also Sahalia and Xiu \cite{sahalia2} for a similar framework. In fact, let $M$ be a $d$-dimensional It\^o process of the form

$$M_t = M_0+ \int_0^t b_sds + \int_0^t\sigma_sdB_s; 0\le t\le T.$$
Let $R_t:=\sup_{0\le s < t}~rank~(c_s); 0 < t \le T$ where $c_s:=\sigma_s \sigma^\top_s; 0\le s\le T.$
\begin{proposition}\label{diffusionmatrix}
If $\sigma$ has continuous paths, then $R_t \le rank~[M]_t~a.s$ for every $t\in [0,T]$. Moreover, the inequality may be strict.
\end{proposition}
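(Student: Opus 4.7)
The plan is to exploit the representation $[M]_t = \int_0^t c_s\, ds$ with $c_s = \sigma_s \sigma_s^\top$ a continuous path of positive semidefinite matrices, and then extract a rank bound from the null space of $[M]_t$.

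First, I would fix $\omega$ and $t$ and work pathwise. Since $c_s$ is PSD, for any $v \in \mathbb{R}^d$ the scalar function $s \mapsto v^\top c_s v$ is non-negative and, by continuity of $\sigma$, continuous. The key observation is that if $v \in \ker([M]_t)$, then
\[
0 \;=\; v^\top [M]_t v \;=\; \int_0^t v^\top c_s v\, ds,
\]
so the continuous non-negative integrand vanishes identically on $[0,t]$. Because $c_s \succeq 0$, $v^\top c_s v = 0$ forces $c_s^{1/2} v = 0$, hence $c_s v = 0$ for every $s \in [0,t]$. Consequently
\[
\ker([M]_t) \;\subset\; \bigcap_{s \in [0,t]} \ker(c_s),
\]
which by orthogonal complement gives $\overline{\bigcup_{s \in [0,t]} \mathrm{range}(c_s)} \subset \mathrm{range}([M]_t)$.

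Taking dimensions on both sides and noting that a union of subspaces contains each of its summands, I obtain
\[
\mathrm{rank}([M]_t) \;\ge\; \sup_{s\in[0,t]} \mathrm{rank}(c_s) \;\ge\; R_t,
\]
which is the desired inequality (holding pathwise, hence almost surely). No hard analytic step is involved once the pathwise PSD/continuity argument is set up; the only mildly subtle point is writing out $v^\top c_s v = \|c_s^{1/2} v\|^2$ to pass from the quadratic form vanishing to $c_s v = 0$.

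For strictness, I would exhibit a 2-dimensional example with a rank-$1$ but time-varying image. Take $d = m = 1$ and define $\sigma_s = (\cos s,\sin s)^\top$ acting on a $1$-dimensional Brownian motion $B$, so that $c_s = \sigma_s \sigma_s^\top$ is rank $1$ for every $s$, giving $R_t = 1$. A direct computation of $[M]_t = \int_0^t c_s\, ds$ shows that its determinant equals $\tfrac{t^2}{4} - \tfrac{\sin^2 t}{4}$, which is strictly positive for every $t \in (0,T]$; hence $\mathrm{rank}([M]_t) = 2 > 1 = R_t$, witnessing strict inequality. The potential obstacle here is only cosmetic: one must make sure the chosen $\sigma$ genuinely has a rotating $1$-dimensional range so that the integrated matrix picks up two linearly independent rank-$1$ contributions.
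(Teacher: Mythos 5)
Your proof is correct and takes a genuinely cleaner route than the paper's. The paper argues by contradiction: it locates a time $t^\ast$ where $\mathrm{rank}~c_{t^\ast}=R_t$ (invoking lower-semicontinuity of rank), builds a candidate vector $w$ in $\mathrm{span}\{v_1,\dots,v_{R_t}\}\cap\ker[M]_t$ via a dimension count, uses continuity of $s\mapsto\langle w,c_sw\rangle$ to carve out an interval $I$ around $t^\ast$ on which the quadratic form stays bounded below, and then integrates to contradict $\langle w,[M]_tw\rangle=0$. Your argument avoids all of this scaffolding: the single observation that a continuous nonnegative integrand with zero integral vanishes identically (applied to $s\mapsto v^\top c_s v$ for $v\in\ker[M]_t$, then promoted to $c_sv=0$ via $v^\top c_sv=\|c_s^{1/2}v\|^2$) gives $\ker[M]_t\subset\bigcap_{s}\ker c_s$ directly, so $\mathrm{range}~c_s\subset\mathrm{range}~[M]_t$ for every $s$ and the rank bound follows by taking suprema. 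You need no eigenvector decomposition, no special point $t^\ast$, and no contradiction. Each approach emphasises a different structural fact: the paper's version makes explicit that the quadratic form stays uniformly positive on an interval, which is the kind of estimate one might reuse quantitatively; yours exposes the cleaner qualitative statement that $\ker[M]_t$ is the common kernel of the $c_s$. One cosmetic note: the intermediate line ``$\overline{\bigcup_s\mathrm{range}(c_s)}\subset\mathrm{range}([M]_t)$'' should read ``$\mathrm{span}\bigl(\bigcup_s\mathrm{range}(c_s)\bigr)$'' (a union of subspaces is not itself a subspace, and no closure is needed in finite dimension), but the conclusion you actually draw --- $\mathrm{range}(c_s)\subset\mathrm{range}([M]_t)$ for each $s$ --- is the correct and sufficient one. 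Your strictness example $\sigma_s=(\cos s,\sin s)^\top$ with $\det[M]_t=(t^2-\sin^2 t)/4>0$ for $t>0$ is valid and arguably slicker than the paper's disjointly-supported diagonal example $\sigma_s=\mathrm{diag}(f(s),f(s-1))$; both make the same point that the range of $c_s$ can sweep out a strictly larger space than it occupies at any single instant. (Minor: you write ``$d=m=1$'' but describe a $2\times 1$ matrix $\sigma_s$, so in the paper's notation this is $d=2$, $m=1$.)
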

\begin{proof}
Let us fix a realization $\omega\in \Omega$ in a set of full measure and some $t$ in $[0,T]$. Let, also, $R_t>0$.
Then, since $c_t$ is a continuous matrix-valued function, and the rank is an integer-valued lower-semicontinuous function, there exists $t^\ast\in [0,t]$
such that $rank~c_{t^\ast} = R_t$.

Since $c_{t^\ast}$ is a non-negative definite matrix, we can find a set of $R_t$ linearly independent eigenvectors for $c_{t^\ast}$,
say, $v_1,\ldots,v_{R_t}$, with respective eigenvalues $\lambda_1,
\ldots,\lambda_{R_t}$, such that $\lambda_i>0,$ for $i=1,\ldots, R_t$.

Now, observe that if $c_1,\ldots,c_{R_t}$ are real numbers such that $c_1^2+\cdots+c_{R_t}^2>0$, then by putting
$w = c_1v_1+\cdots+c_{R_t}v_{R_t}$ and using the orthogonality of the eigenvectors, we have

\begin{equation}\label{directionpos}
\langle w,c_{t^\ast}w\rangle_{\mathbb{R}^d} = c_1^2\lambda_1+\cdots+c_{R_t}^2\lambda_{R_t}>0.
\end{equation}

Note also that, for any such vector $w$, the function $t\mapsto \langle w,c_t w\rangle_{\mathbb{R}^d}$ is continuous, so we can find
an open interval $I$ containing $t^\ast$, with length $|I| = 2\delta~(\text{for some}~\delta>0)$, satisfying
\begin{equation}\label{direction1}
\forall s\in I,\quad \langle w,c_sw\rangle_{\mathbb{R}^d} > 1/2 \<w,c_{t^\ast}w\>_{\mathbb{R}^d}.
\end{equation}

Furthermore, using the non-negative definiteness of $c_s$, we have that
\begin{equation}\label{direction2}
\forall u\in [0,T],\quad \<w,c_{u}w\>_{\mathbb{R}^d} \geq 0.
\end{equation}

Now, suppose, by contradiction, that $rank~[M]_t<R_t$. Then, we can find real numbers
$c_1,\ldots,c_{R_t}$, with $c_1^2+\cdots+c_{R_t}^2>0$, such that, for $w = c_1v_1+\cdots+c_{R_t}v_{R_t}$,
where $v_1,\ldots,v_{R_t}$ are the eigenvectors of $c_{t^\ast}$ given above, we have $[M]_t w =0$, and, in particular,
$$\<w,[M]_tw\>_{\mathbb{R}^d} = 0.$$

Then, using \eqref{directionpos}, \eqref{direction1} and \eqref{direction2}, we obtain
\begin{eqnarray*}
0 = \<w,[M]_tw\>_{\mathbb{R}^d} &=& \int_0^t \<w,c_s w\>_{\mathbb{R}^d}ds\\
&\geq& \int_I \<w,c_sw\>_{\mathbb{R}^d}ds\\
&>& \int_I 1/2\<w,c_{t^\ast}w\>_{\mathbb{R}^d} ds\\
&=& \delta \<w,c_{t^\ast}w\>_{\mathbb{R}^d}  >0.
\end{eqnarray*}
This contradiction shows that $ R_t \le rank [M]_t$.

To show that the inequality may be strict, consider the following example: Let us assume that $T\ge 1$ and we take
$$\sigma_s = \begin{pmatrix}
f(s)& 0\\
0& f(s-1)
\end{pmatrix},$$
where $f(t) = t(1-t)1\!\!1_{[0,1]}$, with $1\!\!1_A$ is the indicator function of the set $A$. Then, clearly,
$$c_s = \begin{pmatrix}
f(s)^2& 0\\
0& f(s-1)^2
\end{pmatrix},$$
and $R_t=1$, for all $t>0$, whereas $rank [M]_t=2$ for $t>1$.
\end{proof}

\begin{remark}
The main message of the above proposition is that if a direction has a non-null quadratic variation for some time $t_0>0$, then this direction has non-null quadratic variation for all times $t\geq t_0$. This phenomenon does not occur with the volatility matrix
$c_s$, as shown above.
\end{remark}
We also stress that Assumptions \ref{A1} and \ref{A2} yield the study of a statistical test to check the existence of a null quadratic variation component in $\mathcal{M}$. The full derivation of the statistical test will be further explored in a future paper.

\begin{corollary}\label{test}
Let $M\in \mathcal{X}^d$ be a truly $d$-dimensional process satisfying Assumption \ref{A2}. Let $\lambda^1_T,\ldots,\lambda^d_T$ be the ordered eigenvalues of the associated quadratic variation matrix $[M]_T$ such that $\lambda^1_T\ge \ldots\ge \lambda^d_T$. The test $H_0:\lambda^d_T=0$ versus
 $H_1:\lambda^d_T>0$, is a well-defined statistical test and it is equivalent to $H_0: rank [M]_T < d$ versus $H_1: rank [M]_T =d$.
\end{corollary}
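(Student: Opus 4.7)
The plan is to reduce the claim to a deterministic linear-algebra statement using Lemma \ref{WZ}. Under Assumptions \ref{A1} and \ref{A2}, that lemma tells us that $\operatorname{rank}[M]_T = \dim\widetilde{\mathcal{M}}_T$ almost surely, and $\dim\widetilde{\mathcal{M}}_T$ is a \emph{deterministic} integer (it is the dimension of a fixed quotient vector space of measurable processes, not a random variable). Consequently, the event $\{\operatorname{rank}[M]_T < d\}$ has probability either $0$ or $1$, and similarly for its complement. This is what ``well-defined statistical test'' means here: neither hypothesis can be partially true on a set of intermediate probability.

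Next I would carry out the equivalence $\{\lambda^d_T = 0\} = \{\operatorname{rank}[M]_T < d\}$ almost surely. Since $[M]_T$ takes values in $\mathbb{M}^+_{d\times d}$ (symmetric non-negative definite), its eigenvalues are all non-negative, and standard linear algebra gives $\operatorname{rank}[M]_T = \#\{j : \lambda^j_T > 0\}$. Because the eigenvalues are ordered in decreasing order, $\lambda^d_T$ is the smallest one, and hence $\lambda^d_T = 0$ if and only if at least one eigenvalue vanishes, which happens if and only if $\operatorname{rank}[M]_T < d$. This is a pointwise identity in $\omega$ on the full-measure set where $[M]_T(\omega)$ is a non-negative definite matrix, which is guaranteed by the construction (\ref{entmatrix}) together with Assumption \ref{A2}.

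Combining the two steps, $\{\lambda^d_T = 0\} = \{\operatorname{rank}[M]_T < d\}$ almost surely, and this event has probability in $\{0,1\}$ by the deterministic identification via Lemma \ref{WZ}. Therefore $H_0:\lambda^d_T = 0$ and $H_0: \operatorname{rank}[M]_T < d$ describe the same null hypothesis, and both are genuine (degenerate-free) statistical hypotheses. The alternative $H_1: \lambda^d_T > 0$ likewise coincides with $\operatorname{rank}[M]_T = d$ almost surely.

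The only mildly subtle point is to make explicit that Assumption \ref{A2} is what guarantees the deterministic character of $\dim \widetilde{\mathcal{M}}_T$ via Lemma \ref{WZ}; without it, one could only assert the equivalence on an event of positive (but not full) measure, and the test would not be well-defined in the classical sense. Everything else is a short appeal to eigenvalue positivity for symmetric non-negative definite matrices, so there is no computational obstacle to overcome.
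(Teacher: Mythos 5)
The paper provides no proof of Corollary \ref{test}, so there is nothing to compare against; it is left as an unproved consequence of the preceding material. Your proof is correct and supplies exactly the two ingredients that are implicit in the paper's surrounding development: (i) for a non-negative definite symmetric matrix the smallest eigenvalue $\lambda^d_T$ vanishes if and only if the matrix is singular, giving the pointwise identity $\{\lambda^d_T = 0\} = \{\operatorname{rank}[M]_T < d\}$; and (ii) Lemma \ref{WZ} identifies $\operatorname{rank}[M]_T$ almost surely with the deterministic integer $\dim\widetilde{\mathcal{M}}$, so the null and alternative events each have probability $0$ or $1$, which is what ``well-defined statistical test'' must mean here (this can also be read off from Lemma \ref{deterministickernel}, which states that $\ker[M]_T$ is deterministic).

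One minor imprecision: $\dim\widetilde{\mathcal{M}}_T$ is deterministic by construction, being the dimension of a fixed quotient of vector spaces of processes; Assumption \ref{A2} is not needed for that. What Assumption \ref{A2} actually buys, via Lemma \ref{WZ}, is the almost-sure identity $\operatorname{rank}[M]_T = \dim\widetilde{\mathcal{M}}_T$, i.e.\ that the realized rank agrees with this deterministic number on a full-measure set. Similarly, non-negative definiteness of $[M]_T(\omega)$ follows from the construction of quadratic variation as a limit of Gram-type sums, not from Assumption \ref{A2}. Neither slip affects the validity of the argument.
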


\begin{remark}\label{correlationA}
It is pertinent to interpret $\mathcal{M} = \mathcal{W}\oplus\mathcal{D}$ from the perspective of semimartingale-based factor models. When $rank~[M]_T < d$ then

$$\mathcal{M} = \mathcal{W}\oplus\mathcal{D}$$
where $dim~\mathcal{D} > 0$. In applications, one may think $\mathcal{M}$ as the space of high-dimensional portfolios composed by $M$ which can be depicted into two dynamic spaces. When $[M]_T$ is singular, then the dynamic space has to be filled with zero quadratic variation dynamics which can be neglected only if one is solely interested in volatility. We stress that this phenomena is intrinsic to the principal component analysis of high-dimensional semimartingale systems.
\end{remark}

\section{Estimation of $(\mathcal{W},\mathcal{D})$}\label{estimationofwd}
In this section, we show how to estimate the pair $(\mathcal{W},\mathcal{D})$ which realizes

$$\mathcal{M} = \mathcal{W}\oplus\mathcal{D}$$
for a given observed process $M\in\mathcal{X}^d$ satisfying Assumptions \ref{A1} and \ref{A2}. The reader may think $(\mathcal{W}, \mathcal{D})$ as a pair of \textit{factor spaces} which are not observed. We stress even if one observes all trajectories of $M$, the components of $\mathcal{D}$ are not visible when $dim~\mathcal{D}>0$.

\subsection{Identification of the Spaces $(\mathcal{W},\mathcal{D}$)}
Throughout this section, we are going to fix a truly $d$-dimensional process $M = (M^1, \ldots, M^d)\in\mathcal{X}^d$ satisfying Assumption \ref{A2}. Let $\mathcal{M} = \mathcal{W}\oplus \mathcal{D}$ be the splitting introduced in (\ref{sp1}). We assume that $dim~\mathcal{W}=p$ and $dim~\mathcal{D}=d-p$, where $1\le p\le d$. In order to clarify the exposition, we first assume that one is able to observe all trajectories of a given $M\in \mathcal{X}^d$ in continuous time.

\begin{proposition}\label{randomV}
Let $M=(M^1,\ldots, M^d)$ be a $d$-dimensional process satisfying
Assumptions \ref{A1} and \ref{A2}, $span~\{M^1,\ldots, M^d \}=~\mathcal{M}$ and let
$[M]_T$ be the quadratic variation matrix of $M$. Let $\{v_1, \ldots, v_d\}$ be an orthonormal basis formed by eigenvectors associated to the ordered
(decreasing order) eigenvalues of $[M]_T$. Let $\mathcal{V}:\Omega\rightarrow \mathbb{M}_{d\times d}$ be the random matrix given by

$$
\mathcal{V}(\omega): = \{v_{ij}(\omega); 1\le i,j\le d\}.
$$
where $v_i = (v_{i1}, \ldots,v_{id}); 1\le i\le d$. Then there exists a set $\Omega^*$ of full measure such that for each realization $\omega\in \Omega^*$, $\{(\mathcal{V}(\omega)M_{\cdot})^i;p+1\le i\le d\}$ is a basis for $\mathcal{D}$ and $\{(\mathcal{V}(\omega)M_{\cdot})^i;1\le i\le p\}$ is a basis for $\mathcal{W}$. Moreover,

\begin{equation}\label{factorranking}
[(\mathcal{V}M)^1]_T\ge [(\mathcal{V}M)^1]_T\ldots \ge [(\mathcal{V}M)^d]_T~a.s.
\end{equation}
\end{proposition}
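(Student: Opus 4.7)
\emph{Approach.} The plan is to work pathwise on a full-measure set $\Omega^{*}$, exploiting the spectral decomposition of $[M]_{T}$. Because $\{v_{1},\ldots,v_{d}\}$ is orthonormal, $\mathcal{V}(\omega)$ is a.s.\ an orthogonal matrix, hence invertible; combined with Assumption \ref{A1}, this ensures that $\{(\mathcal{V}M)^{1},\ldots,(\mathcal{V}M)^{d}\}$ is pathwise a basis of $\mathcal{M}$. The real work is to compute the pairwise brackets $[(\mathcal{V}M)^{i},(\mathcal{V}M)^{j}]_{T}$ and then use them to split these $d$ processes into the ones that belong to $\mathcal{D}$ and the ones that form a complement of $\mathcal{D}$ in $\mathcal{M}$.

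\emph{Key bracket computation.} Fix $\omega\in\Omega^{*}$ and $i,j\in\{1,\ldots,d\}$. Once $\omega$ is fixed the scalars $v_{ik}(\omega)$ are constants bounded by $1$, so bilinearity of the limit in (\ref{quadraticcov}) — combined with the fact that multiplication by bounded $\mathcal{F}_{T}$-measurable variables preserves convergence in probability — yields
$$[(\mathcal{V}M)^{i},(\mathcal{V}M)^{j}]_{T} \;=\; \sum_{k,l=1}^{d} v_{ik}\, v_{jl}\, [M^{k},M^{l}]_{T} \;=\; v_{i}^{\top}[M]_{T}\, v_{j}.$$
Since $[M]_{T}v_{j}=\lambda^{j}_{T}v_{j}$ and $\langle v_{i},v_{j}\rangle_{\mathbb{R}^{d}}=\delta_{ij}$, the right-hand side equals $\lambda^{j}_{T}\delta_{ij}$ a.s. In particular $[(\mathcal{V}M)^{i}]_{T}=\lambda^{i}_{T}$, which proves the ordering (\ref{factorranking}).

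\emph{Identification of the two bases.} Lemma \ref{WZ} gives $\mathrm{rank}\,[M]_{T}=\dim\widetilde{\mathcal{M}}=p$ a.s., so the ordered spectrum splits as $\lambda^{1}_{T}\ge\cdots\ge\lambda^{p}_{T}>0$ and $\lambda^{p+1}_{T}=\cdots=\lambda^{d}_{T}=0$ a.s. For $i\ge p+1$, the bracket identity gives $[(\mathcal{V}M)^{i}]_{T}=0$ a.s., so $(\mathcal{V}M)^{i}\in\mathcal{D}$; orthogonality of $v_{p+1},\ldots,v_{d}$ together with Assumption \ref{A1} yields linear independence of the corresponding processes in $\mathcal{X}$, and since $\dim\mathcal{D}=d-p$ they form a basis of $\mathcal{D}$. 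For $i\le p$, any nonzero real combination $W=\sum_{i\le p}c_{i}(\mathcal{V}M)^{i}$ has quadratic variation $[W]_{T}=\sum_{i\le p}c_{i}^{2}\lambda^{i}_{T}>0$ a.s., so $W\notin\mathcal{D}$; thus $\mathrm{span}\{(\mathcal{V}M)^{1},\ldots,(\mathcal{V}M)^{p}\}\cap\mathcal{D}=\{0\}$. Since $\mathcal{V}$ is invertible the full family spans $\mathcal{M}$, and the decomposition (\ref{sp1}) together with dimension counting forces $\{(\mathcal{V}M)^{i}\}_{i=1}^{p}$ to be a basis of a complement of $\mathcal{D}$ in $\mathcal{M}$, i.e., of $\mathcal{W}$.

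\emph{Main obstacle.} The only real subtlety is the bracket identity with random coefficients: one must pass the $\mathcal{F}_{T}$-measurable scalars $v_{ik}(\omega)$ through the limit-in-probability defining (\ref{quadraticcov}). This is harmless because the $v_{ik}$ do not depend on the partition $\Pi$ and are bounded by $1$, so a Slutsky-type argument preserves convergence in probability. A minor orthogonal technicality — choosing the eigenvectors in a jointly measurable way when eigenvalues coalesce — is handled by a standard measurable-selection argument applied to the adapted spectral data provided by Lemma \ref{eigenvatheorem}.
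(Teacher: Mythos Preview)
Your overall strategy---diagonalize $[M]_T$, note that the last $d-p$ eigenvectors span the kernel so the corresponding combinations have null quadratic variation, then use invertibility of $\mathcal{V}$ and dimension counting to split $\mathcal{M}$---is exactly the paper's approach; the paper reads off the ordering (\ref{factorranking}) from Proposition~\ref{ranking} whereas you get it directly from the diagonal form, which is fine.

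There is, however, a conceptual slip in your ``main obstacle'' paragraph that hides the one step that actually needs work. The proposition is about $(\mathcal{V}(\omega)M_\cdot)^i$ for each \emph{fixed} $\omega\in\Omega^*$, i.e., the \emph{deterministic} linear combination $\sum_k v_{ik}(\omega)M^k\in\mathcal{M}$. For deterministic scalars bilinearity of the bracket is immediate---no Slutsky argument is required. What you actually need, in order to conclude $\sum_k v_{ik}(\omega)M^k\in\mathcal{D}$, is that
\[
\Big[\textstyle\sum_k v_{ik}(\omega)M^k\Big]_T(\omega')=v_i(\omega)^\top[M]_T(\omega')\,v_i(\omega)=0\quad\text{for a.e. }\omega',
\]
not merely for $\omega'=\omega$. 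Your Slutsky computation (random coefficients, same $\omega$ throughout) only delivers the diagonal value $v_i(\omega)^\top[M]_T(\omega)v_i(\omega)=\lambda_T^i(\omega)$, and the resulting random-coefficient process is not an element of $\mathcal{M}$ at all, so the sentence ``$(\mathcal{V}M)^i\in\mathcal{D}$'' is not justified as written. The missing ingredient is precisely that $\ker[M]_T$ is a deterministic subspace (Lemma~\ref{deterministickernel}), so that $v_i(\omega)\in\ker[M]_T(\omega)$ forces $v_i(\omega)\in\ker[M]_T(\omega')$ for a.e.\ $\omega'$. The paper's own proof glosses over this same point and supplies Lemma~\ref{deterministickernel} immediately afterwards; once you invoke it, your argument is complete.
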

\begin{proof}
By applying the standard spectral theorem on $[M]_T(\omega)$, we can find a set of eigenvectors $\{v_i(\omega); 1\le i\le d\}$ associated to $[M]_T(\omega)$ which constitutes an orthonormal basis for $\mathbb{R}^d$, so that $\mathcal{V}(\omega)$ is invertible for every $\omega\in \Omega$. Let $p=dim~\mathcal{W}$. If $d-p>0$ then Lemma \ref{WZ} yields $v_i\in Ker~[M]_T~a.s; p+1\le i\le d$. Therefore, $[M]_Tv_i$ is null a.s for every $i\in \{p+1, \ldots, d\}$ which implies that the last $d-p$ rows of $\mathcal{V}(\omega)\circ [M]_T(\omega)$ are null for every $\omega\in \Omega^*$ where $\Omega^*$ has full probability. Let us fix $\omega^*\in \Omega^*$ and we write $\mathcal{V}= \mathcal{V}(\omega^*)$, $v_i=v_i(\omega^*)$, $(J^1, \ldots, J^d)\in \mathcal{X}^d$, where $J^i= (\mathcal{V}M)^i; 1\le i\le d$. Now, since $\mathcal{V}$ is invertible then $\{J^1, \ldots, J^d\}$ is a linearly independent subset of $\mathcal{M}$. Moreover,

$$\sum_{j=1}^d v_{ij} \big[M^\ell,M^j\big]_T = 0~a.s, \quad \ell=1,\ldots,d; i=p+1,\ldots, d$$
which by linearity implies that

\begin{equation}\label{f}
\Big[M^\ell,\sum_{j=1}^d v_{ij}M^j\Big]_T = 0~a.s; \ell=1,\ldots,d; i=p+1,\ldots, d.
\end{equation}
More importantly, (\ref{f}) yields $\Big[ \sum_{j=1}^d v_{ij}M^j \Big]_T=0~a.s; p+1\le i\le d$. Since $span~\{J^{p+1}, \ldots, J^d\}\subset \mathcal{M}$, we actually have $\text{span}~\{J^{p+1}, \ldots, J^d\}\subset \mathcal{D}$ and the linear independence yields $span~\{J^{p+1}, \ldots, J^d\}= \mathcal{D}$. Therefore,

\begin{equation}\label{f1}
\text{span}~\{J^1, \ldots, J^d\} = \text{span} \{J^1 ,\ldots, J^p\}\oplus\mathcal{D}\subset \mathcal{M} = \mathcal{W}\oplus\mathcal{D}.
\end{equation}
Since $\{J^1\ldots, J^p\}$ is a linearly independent subset of $\mathcal{M}$, than (\ref{f1}) yields

$$span \{J^1,\ldots, J^p\} = \mathcal{W}.$$
Lastly, the ordering (\ref{factorranking}) is an immediate consequence of Proposition \ref{ranking}.
\end{proof}
With the obvious modifications, we stress the result of Proposition \ref{randomV} also holds over $[0,t]$ for every $0< t< T$.

\subsection{Estimation of the spaces $(\mathcal{W},\mathcal{D})$}
Let us suppose that we are in the same setup of the previous section, but now we have a high-frequency of observations at hand from a truly $d$-dimensional process $M=(M^1,\ldots,M^d)$ satisfying Assumption \ref{A2}. In this section, the high-frequency data is assumed to be observed at common regular times for each $M^i; i=1\ldots, d$. We leave the case of non-synchronous data to a future research. Throughout this section, we assume the existence of a consistent estimator $\widehat{[M]}_T$ for $[M]_T$ which satisfies the following assumption:

\begin{assumption}\label{C1}
$\widehat{[M]}_T$ is a sequence of non-negative definite and self-adjoint matrices such that
$\widehat{[M]}_T \stackrel{p}{\to} [M]_T$ as $\|\Pi\|\rightarrow 0$.
\end{assumption}
In the sequel, we fix $\widehat{[M]}_T$ satisfying Assumption \ref{C1} and we choose~\footnote{For instance, if $\mathbb{E}\|\widehat{[M]}_T - [M]_T\|^2_\mathbf{F}\le O(r_n)$ than choosing $\epsilon\rightarrow 0$ in such way that $\epsilon^2 (r_n)^{-1}\rightarrow \infty$ as $n\rightarrow\infty$ allows us to take $\hat{p}=\text{the number of non-zero eigenvalues of}~\widehat{[M]}_T$ bigger than $\epsilon$ as a consistent estimator.} any consistent estimator $\hat{p}$ for $rank~[M]_T$. The goal of this section is to describe a generic estimation methodology based on the existence of $\widehat{[M]}_T$ satisfying Assumption \ref{C1}. We stress the results of this section do not depend on the estimator of the quadratic variation matrix. We refer the reader to e.g \cite{mikland, wang, zeng, mancino, tao, bibinger, fan, podolskij} and other references therein for a complete view of the estimation methods for $[M]_T$.

We need to define a metric notion on the set of finite-dimensional subspaces embedded on a possibly infinite-dimensional vector space. For this task, we make use of the same metric between subspaces defined by Bathia et al.~\cite{bathia}. Let $\mathcal{N}_1$ and $\mathcal{N}_2$ be two
finite-dimensional Hilbert subspaces of an inner product vector space $H$ with dimensions $m_1$ and $m_2$, respectively. Let $\{\zeta_{i1},\ldots,\zeta_{im_i}\}$ be an orthonormal basis of $\mathcal{N}_i$, $i=1,2$. Then, we define

\begin{equation}\label{metricsub}
D(\mathcal{N}_1,\mathcal{N}_2) := \sqrt{1-\frac{1}{\max\{m_1,m_2 \}}\sum_{k=1}^{m_1}\sum_{j=1}^{m_2} (\langle\zeta_{2j},\zeta_{1k}\rangle_{H})^2}.
\end{equation}
In the sequel, we need to compute distances for finite-dimensional subspaces which are not embedded in a natural common Hilbert space. For this reason, let $A$ be a finite-dimensional linear space. If $A_1$ and $A_2$ are finite-dimensional subspaces of $A$, then we define

\begin{equation}\label{metricsub1}
d(A_1,A_2):=D(\Phi(A_1), \Phi(A_2))
\end{equation}
where $\Phi:A\rightarrow \mathbb{R}^m;~i=1,2$ is the canonical isomorphism and $dim~A= m$. One can easily check that $d$ is indeed a metric over the set of all finite-dimensional subspaces of $A$. The metric $d$ in~(\ref{metricsub1}) is very convenient to study consistency of subspace estimators.

Before presenting the main result of this section, we need two preliminary lemmas.

\begin{lemma}\label{conv-esp-D}
Let $C_n,C:\Omega\rightarrow \mathbb{M}_{d\times d}$ be a sequence of self-adjoint real $d\times d$ matrices such that $C_n\stackrel{p}{\to} C$ as $n\rightarrow \infty$. Assume that $q=\dim Ker(C)~a.s$ and let us denote by ${v_1^n},\ldots,{v}_q^n$ a set of orthonormal eigenvectors associated to the $q$ least eigenvalues of $C_n$. Let $K_n = \text{span}~\{v_1^n,\ldots,v_q^n\}$ and $K = Ker(C)$. Then,

 $$D(K_n,K)\stackrel{p}{\to} 0$$
as $n\rightarrow \infty$.
\end{lemma}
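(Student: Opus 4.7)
The plan is to reduce this convergence-in-probability assertion to a pathwise deterministic statement, leveraging a spectral gap created by the assumption $\dim~Ker(C)=q$ a.s. Since convergence in probability is characterized by the property that every subsequence admits a further subsequence converging almost surely, I extract a subsequence (still indexed by $n$) along which $C_n\to C$ almost surely; I may measure convergence in any fixed matrix norm, since all norms on $\mathbb{M}_{d\times d}$ are equivalent. Fix $\omega$ in the full-measure event where this convergence holds and $\dim~Ker(C(\omega))=q$, and let $\mu(\omega)>0$ denote the smallest nonzero eigenvalue of $C(\omega)$. Since $C$ is self-adjoint, the restriction $C|_{K^\perp}$ satisfies $\|Cu\|\ge \mu\|u\|$ for every $u\in K^\perp$, where $K=Ker(C)$.

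By Weyl's eigenvalue perturbation inequality (already invoked in Lemma \ref{eigenvatheorem}), the ordered eigenvalues of $C_n(\omega)$ converge to those of $C(\omega)$. In particular, the $q$ least eigenvalues $\lambda_1^n\le\cdots\le\lambda_q^n$ of $C_n$ satisfy $\max_{1\le j\le q}|\lambda_j^n|\to 0$, while the remaining $d-q$ eigenvalues of $C_n$ eventually exceed $\mu/2$. This separation is what makes the choice of the ``$q$ least eigenvectors'' unambiguous for $n$ large.

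The core estimate is a one-line perturbation of the eigenvector equation. For each $j\in\{1,\ldots,q\}$, decompose the unit eigenvector as $v_j^n = P_K v_j^n + P_{K^\perp}v_j^n$, where $P_K, P_{K^\perp}$ are the orthogonal projections onto $K$ and $K^\perp$. Starting from $C_nv_j^n=\lambda_j^n v_j^n$ and using $CP_Kv_j^n=0$, I would write $Cv_j^n = CP_{K^\perp}v_j^n = \lambda_j^n v_j^n - (C_n-C)v_j^n$, which gives
$$\mu\,\|P_{K^\perp}v_j^n\|\le \|CP_{K^\perp}v_j^n\|\le |\lambda_j^n|+\|C_n-C\|.$$
The right-hand side tends to $0$, hence $\|P_{K^\perp}v_j^n\|\to 0$ for each $j\le q$, and by the Pythagorean identity $\|P_Kv_j^n\|^2=1-\|P_{K^\perp}v_j^n\|^2\to 1$.

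To conclude, I would unfold the definition \eqref{metricsub} with the orthonormal basis $\{v_j^n\}$ of $K_n$ and any orthonormal basis $\{\zeta_k\}$ of $K$, using the Parseval identity $\sum_k\langle v_j^n,\zeta_k\rangle^2=\|P_Kv_j^n\|^2$, to obtain
$$D(K_n,K)^2 \;=\; 1-\frac{1}{q}\sum_{j=1}^q\|P_Kv_j^n\|^2 \;=\; \frac{1}{q}\sum_{j=1}^q\|P_{K^\perp}v_j^n\|^2 \;\longrightarrow\; 0$$
almost surely along the chosen subsequence. Reverting to the original sequence via the subsequence principle yields $D(K_n,K)\stackrel{p}{\to}0$. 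The main subtlety I foresee is that the spectral gap $\mu(\omega)$ is genuinely random and can be arbitrarily small on sets of positive but small probability; however, since $\mu(\omega)>0$ almost surely by hypothesis on $\dim~Ker(C)$, the pathwise bound is valid for almost every $\omega$, which is all that the subsequence principle requires.
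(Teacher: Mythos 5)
Your argument is correct, and the linear-algebraic core is the same as the paper's: bound the $K^\perp$-component of each $v_j^n$ via the spectral gap $\mu=\alpha_{q+1}$ of $C$ on $K^\perp$ together with the smallness of $Cv_j^n$, then compute $D(K_n,K)^2=\tfrac{1}{q}\sum_j\|P_{K^\perp}v_j^n\|^2$. Where you differ is in the probabilistic framing: the paper works directly in probability, showing $\sup_{v\in K_n,\|v\|=1}\|Cv\|\stackrel{p}{\to}0$ via a triangle-inequality split, and then multiplies by the a.s.-finite but random factor $1/\alpha_{q+1}$ without comment; you instead invoke the subsequence principle to reduce to a pathwise statement, which makes the dependence on the random gap $\mu(\omega)$ — arbitrarily small on small-probability events but strictly positive a.s. — completely transparent. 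You also obtain the smallness of $Cv_j^n$ in one line by perturbing the eigenvector equation $C_nv_j^n=\lambda_j^n v_j^n$ and invoking Weyl's inequality for $\lambda_j^n\to 0$, rather than through the supremum bound the paper uses. Both buy the same estimate; yours is a bit cleaner and handles the random gap more carefully, the paper's avoids introducing subsequences. One caveat your write-up shares with the paper's: both implicitly assume the eigenvalues of $C$ are nonnegative (so that "smallest nonzero eigenvalue" provides the gap and the "$q$ least eigenvalues of $C_n$" converge to zero rather than to some negative spectrum), which is fine in the intended application where $C=[M]_T$ is a quadratic variation matrix and $C_n=\widehat{[M]}_T$ is nonnegative by Assumption \ref{C1}, but is slightly stronger than the lemma as literally stated for general self-adjoint matrices.
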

\begin{proof}
Let$\{v_i\}_{i=1}^d$ be an orthonormal basis for $\mathbb{R}^d$ given by eigenvectors of $C$. Let $v_1,\ldots,v_q$ be an orthonormal subset of eigenvectors of $C$ associated to eigenvalues $\alpha_1,\ldots,\alpha_q$ and $Ker(C)=\text{span}~\{v_1,\ldots,v_q\}$. To shorten notation, in the sequel we denote by $\langle \cdot, \cdot\rangle=\|\cdot\|^{1/2}$ the inner product over Euclidean spaces. We may assume that $0 < q < d$. Let $\{v_{q+1},\ldots,v_d\}$ be a basis for the orthogonal complement $K^\perp$. At first, we notice that since $K_n$ and $K$ have the same dimension, it is sufficient to prove that $D(K_n,K^\perp) \stackrel{p}\to 1$. This is equivalent to prove that

$$\sum_{j=1}^q \sum_{i=1}^{d-q} (\<v_j^n,v_{q+i}\>)^2\stackrel{p}\to 0\quad {as}~n\rightarrow \infty.$$
To do so, let $q_{i,j} = \<v_j^n,v_{q+i}\>v_{q+i}$, and note that $\|q_{i,j}\|\leq 1~a.s$ and $Cq_{i,j} = \<v_j^n,v_{q+i}\>\alpha_{q+i}v_{q+i}$.
Therefore,
$$\<Cq_{i,j},v_j^n\> = \alpha_{q+i}(\<v_j^n,v_{q+i}\>)^2 \Rightarrow \sum_{i=1}^d\sum_{j=1}^{d-q} \<Cq_{i,j},v_j^n\> = \sum_{i=1}^d\sum_{j=1}^{d-q}\alpha_{q+i}(\<v_j^n,v_{q+i}\>)^2,$$
and since $\sum_{i,j}\alpha_{q+i}(\<v_j^n,v_{q+i}\>)^2\geq \alpha_{q+1}\sum_{i,j}(\<v_j^n,v_{q+i}\>)^2~a.s$ we may conclude that
\begin{eqnarray*}
\sum_{i=1}^d\sum_{j=1}^{d-q}(\<v_j^n,v_{q+i}\>)^2 &\leq& \frac{1}{\alpha_{q+1}} \sum_{i=1}^d\sum_{j=1}^{d-q}\<v_j^n,Cq_{i,j}\>\\
&=& \frac{1}{\alpha_{q+1}} \sum_{i=1}^d\sum_{j=1}^{d-q} \<q_{i,j},Cv_j^n\>\\
&\leq& \frac{1}{\alpha_{q+1}} \sum_{i=1}^{d}\sum_{j=1}^{d-q} \|q_{i,j}\|\cdot\|Cv_j^n\|\\
&\leq& \frac{1}{\alpha_{q+1}} \sum_{i=1}^d \sum_{j=1}^{d-q}\|Cv_j^n\|~a.s~\forall n\ge 1.
\end{eqnarray*}

We now claim that
\begin{equation}\label{mconv}
\sup_{\substack{v\in K_n\\ \|v\|=1}} \|C v\|\rightarrow  0.
\end{equation}
Let $\alpha^n_1\ge \alpha^n_2\ge \ldots\ge \alpha^n_q$ be the ordered eigenvalues of $C_n$ related to the $q$ least eigenvalues. Let $\gamma_n$ be the number of non-zero eigenvalues of $C_n$. We have $\mathbb{P}\{\gamma_n = d-q\} =1$ for every $n$ sufficiently large so that

$$\sup_{\substack{v\in K_n\\ \|v\|=1}} \|C_n v\| \leq \alpha^n_{1}\stackrel{p}{\to} 0$$
as $n\rightarrow \infty$.

On the other hand, $C_n\stackrel{p}{\to} C$ as $n\rightarrow \infty$ and hence

$$\sup_{\substack{v\in\mathbb{R}^p\\ \|v\| =1}} \| C_n v - Cv\| \stackrel{p}{\to} 0$$
as $n\rightarrow \infty$.
Therefore, triangle inequality yields
\begin{eqnarray*}
\sup_{\substack{v\in K_n \\ \|v\|=1}} \|C v\| &\leq& \sup_{\substack{v\in K_n\\ \|v\| =1}} \| C_n v - Cv\| + \sup_{\substack{v\in K_n\\ \|v\| =1}} \|C_n v\|\\
& &\\
&\leq& \sup_{\substack{v\in\mathbb{R}^p\\ \|v\| =1}} \| C_nv - Cv\| + \sup_{\substack{v\in K_n\\ \|v\|=1}} \|C_nv\|\\
& &\\
&\stackrel{p}{\to}& 0
\end{eqnarray*}
as $n\rightarrow \infty$. This shows (\ref{mconv}) and we may conclude the proof.
\end{proof}

\begin{lemma}\label{deterministickernel}
Let $M\in \mathcal{X}^d$ be a truly $d$-dimensional process satisfying Assumption \ref{A2}. Then, the set $\ker [M]_t$ is deterministic~\footnote{A random set $A$ is deterministic if there exists a subset $A\subset \mathbb{R}^d$ such that $A=B$~a.s.} for every $t\in [0,T]$.
\end{lemma}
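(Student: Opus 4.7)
Here is a plan for proving that $\ker[M]_t$ is deterministic.

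The starting observation is that, since $[M]_t$ is almost surely symmetric and non-negative definite (which follows from the polarization/bilinearity of the bracket and Assumption \ref{A2}), one has the pathwise identity
$$\ker[M]_t(\omega)=\bigl\{v\in\mathbb{R}^d:\, v^{\top}[M]_t(\omega)\,v=0\bigr\}=\bigl\{v\in\mathbb{R}^d:\, [\textstyle\sum_j v^j M^j,\sum_j v^j M^j]_t(\omega)=0\bigr\}.$$
Thus $v\in\ker[M]_t$ is equivalent to the linear combination $\sum_j v^j M^j\in\mathcal{M}_t$ having null quadratic variation at $t$ on the corresponding realization. This motivates introducing the candidate deterministic subspace
$$B_t:=\bigl\{v\in\mathbb{R}^d:\,\textstyle\sum_{j=1}^d v^j M^j\in\mathcal{D}_t\bigr\}.$$
Because $\mathcal{D}_t$ is a deterministic linear subspace of $\mathcal{M}_t$ (its definition is invariant under the ``a.s.'' qualifier), $B_t$ is a deterministic linear subspace of $\mathbb{R}^d$. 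The proposal is to show $B_t=\ker[M]_t$ almost surely.

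For the inclusion $B_t\subseteq\ker[M]_t$ a.s., I would pick a basis $\{b_1,\ldots,b_{q_t}\}$ of $B_t$, where $q_t:=\dim\mathcal{D}_t$ is deterministic. For each $i$, $\sum_j b_i^j M^j\in\mathcal{D}_t$ yields $b_i^{\top}[M]_t b_i=0$ on an event $E_i$ of full measure, and non-negative definiteness of $[M]_t$ then forces $[M]_t b_i=0$ on $E_i$. On the finite intersection $\bigcap_{i=1}^{q_t}E_i$ (still of full measure), linearity gives $[M]_t v=0$ for every $v\in B_t$, i.e.\ $B_t\subseteq\ker[M]_t$.

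For the reverse inclusion I would argue by dimension counting. The linear map $\phi:\mathbb{R}^d\to\mathcal{M}_t$, $v\mapsto\sum_j v^j M^j$, is an isomorphism thanks to Assumption \ref{A1}, and by construction $B_t=\phi^{-1}(\mathcal{D}_t)$, so $\dim B_t=\dim\mathcal{D}_t=q_t$. On the other hand, Lemma \ref{WZ} gives
$$\text{rank}\,[M]_t=\dim\widetilde{\mathcal{M}}_t=\dim\mathcal{M}_t-\dim\mathcal{D}_t=d-q_t\quad\text{a.s.},$$
so $\dim\ker[M]_t=q_t$ a.s. Combined with the almost sure inclusion from the previous step, the equality of dimensions forces $B_t=\ker[M]_t$ a.s. Since $B_t$ is deterministic, this proves the claim.

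The only delicate point is the first inclusion, where one must be careful that $B_t$ could a priori be uncountable: the argument succeeds only because $B_t$ is a \emph{subspace}, so it is determined by a finite basis and the exceptional null sets can be amalgamated. Everything else is routine once Lemma \ref{WZ} and the non-negative definiteness of $[M]_t$ are in hand.
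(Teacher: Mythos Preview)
Your proof is correct and takes essentially the same approach as the paper: both identify the deterministic candidate subspace as the preimage of $\mathcal{D}_t$ under the coordinate isomorphism $\mathbb{R}^d\to\mathcal{M}_t$, verify it lies in $\ker[M]_t$ almost surely, and then invoke Lemma~\ref{WZ} for the dimension count. The only cosmetic difference is that the paper works with an explicit change-of-basis matrix (its columns $a_1,\ldots,a_{d-p_t}$ are your $b_1,\ldots,b_{q_t}$) and kills $[M]_t a_\ell$ componentwise via $[M^i,N^\ell]_t=0$, whereas you use the equivalent non-negative-definiteness argument $v^\top[M]_t v=0\Rightarrow[M]_t v=0$.
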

\begin{proof}
For $t=0$ the statement is obvious, so let us fix $t\in (0,T]$ and let $\mathcal{D}_t$ be the subspace of $\mathcal{M}_t$ given by (\ref{Dt}). Let $p_t$ be the dimension of $\widetilde{\mathcal{M}}_t$. Let $N^1,\ldots,N^{d-p_t}$ be a basis of $\mathcal{D}_t$ and let $R^1,\ldots,R^{p_t}$ be a complement basis of $\mathcal{M}_t$ in such a way that $\{N^1,\ldots,N^{d-p_t},R^1,\ldots,R^{p_t}\}$ is a basis of $\mathcal{M}_t$. Let $A$ be the change of basis from $\{N^1,\ldots,N^{d-p_t},R^1,\ldots,R^{p_t}\}$ to $M = \{M^1, \ldots, M^d\}$ with matrix representation $A=\{(a_{ij})_{1\leq i,j\leq d}\}$. We set
$\Omega^*:= \Omega - O$ where $O:=\big\{\omega; rank~[M]_t(\omega)\neq p_t~\text{or}~[N^\ell]_t(\omega)>0~\text{for some}~\ell \in \{1, \ldots, d-p_t\}\big\}$. From Lemma \ref{WZ} and the definition of $\mathcal{D}_t$, we know that $\Omega^*$ has full probability. We pick $\omega\in \Omega^*$. Of course,

$$a_1 := (a_{11},\ldots,a_{d1}),\ldots,a_{d-p_t}:=(a_{1(d-p_t)},\ldots,a_{d(d-p_t)})$$
constitutes a set of $d-p_t$ linearly independent deterministic vectors in $\mathbb{R}^d$ and by the every definition

$$[M]_t(\omega)a_\ell = \sum_{k=1}^d a_{k\ell }[M^i, M^k]_t(\omega) = [M^i, N^\ell]_t(\omega)=0 $$
for $1\le \ell \le d-p_t, 1\le i\le d$.
Since $ker [M]_t(\omega)\subset \mathbb{R}^d$ has dimension $d-p_t$ for every $\omega\in \Omega^*$, then $ker [M]_t(\omega) = \text{span}~\{a_1, \ldots,a_{d-p_t}\}$ for every $\omega\in \Omega^*$.
\end{proof}

Let $\hat{\mathcal{V}}$ be the orthogonal matrix formed by orthonormal eigenvectors of $\widehat{[M]}_T$. Of course, we are not able to prove that $\widehat{\mathcal{V}}M$ converges to $\mathcal{V}M$ due to the lack of identification of eigenvectors. What is true is the following notion of convergence. In the sequel, if $\{A_n,B_n; n\ge 1\}$ is a sequence of random variables, then

$$A_n\succeq Bn\quad\text{as}~n\rightarrow \infty$$
means that, $\mathbb{P} (A_n < B_n)\rightarrow 0$ as $n\rightarrow \infty$. We similarly define $\preceq$ and $A_n\simeq B_n$ when both $A_n\succeq B_n$ and $A_n\preceq B_n$ as $n\rightarrow \infty$.

\begin{theorem}\label{WDTh}
Let $M = (M^1,\ldots,M^d)$ be a process satisfying Assumptions \ref{A1} and \ref{A2}. Let $\widehat{[M]}_T$ be a consistent estimator for $[M]_T$ satisfying Assumption \ref{C1} and let $\hat{p}$ be any consistent estimator for $\text{rank}~[M]_T$. Let $\widehat{\mathcal{V}}$ be the orthogonal matrix whose rows are formed by eigenvectors of
$\widehat{[M]}_T$. If $(\hat{J}^1_{\cdot},\ldots,\hat{J}^d_{\cdot}) := \widehat{\mathcal{V}}M_\cdot$, then let us define $\widehat{\mathcal{W}} := span~\{\hat{J}^{1},\ldots,\hat{J}^{\hat{p}}\}$ and $\widehat{\mathcal{D}} := span~\{\hat{J}^{\hat{p}+1},\ldots,\hat{J}^{d}\}$. Under the above conditions, we have

$$d(\widehat{\mathcal{W}},\mathcal{W})\stackrel{p}{\to}0\hbox{~~and~~}d(\widehat{\mathcal{D}},\mathcal{D})\stackrel{p}{\to} 0,$$
as $\|\Pi\|\rightarrow 0$. If $\widehat{\mathcal{M}}:=\widehat{\mathcal{W}}\oplus \widehat{\mathcal{D}}$ then $d(\widehat{\mathcal{M}},\mathcal{M})\stackrel{p}{\to}0$ as $\|\Pi\|\rightarrow 0$. Moreover,

\begin{equation}\label{r1}
[\hat{J}^1]_T\succeq\ldots\succeq[\hat{J}^{\hat{p}}]_T
\end{equation}

\begin{equation}\label{r2}
[\hat{J}^{i}]_T\simeq 0;\quad \hat{p}\le i\le d~\text{as}~\|\Pi\|\rightarrow 0.
\end{equation}
\end{theorem}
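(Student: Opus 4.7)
Plan. My plan is to reduce everything to finite-dimensional linear algebra via the canonical isomorphism $\Phi:\mathcal{M}\to\mathbb{R}^d$ sending $M^j\mapsto e_j$, and then to combine Lemma \ref{conv-esp-D}, Lemma \ref{deterministickernel}, and a Weyl-type perturbation bound.

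First, I would observe that under $\Phi$ the deterministic subspaces $\mathcal{D}$ and $\mathcal{W}$ correspond to $\ker [M]_T$ and $(\ker [M]_T)^\perp$, respectively: writing any $X=\sum_j c_j M^j$ one has $[X]_T=c^\top [M]_T c$, so $X\in\mathcal{D}\iff c\in\ker [M]_T$, and the latter set is deterministic by Lemma \ref{deterministickernel}. For each fixed $\omega$ the process $\hat J^i(\omega)=\sum_j\hat v_{ij}(\omega)M^j$ is a \emph{real} linear combination of $M^1,\ldots,M^d$, hence lies in $\mathcal{M}$, and $\Phi(\hat J^i(\omega))=\hat v_i(\omega)$. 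Consequently $\Phi(\widehat{\mathcal{W}}(\omega))$ and $\Phi(\widehat{\mathcal{D}}(\omega))$ are, respectively, the spans of the top $\hat p(\omega)$ and of the bottom $d-\hat p(\omega)$ orthonormal eigenvectors of $\widehat{[M]}_T(\omega)$.

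Next, since $\hat p$ is an integer-valued consistent estimator of $p=\mathrm{rank}\,[M]_T$, we have $\mathbb{P}(\hat p=p)\to 1$. On this event $\Phi(\widehat{\mathcal{D}})$ is generated by the $d-p$ least-eigenvalue eigenvectors of $\widehat{[M]}_T$, which places us exactly in the setting of Lemma \ref{conv-esp-D} with $C_n=\widehat{[M]}_T$, $C=[M]_T$ and $q=d-p$; that lemma then yields $D(\Phi(\widehat{\mathcal{D}}),\Phi(\mathcal{D}))\stackrel{p}{\to} 0$, i.e.\ $d(\widehat{\mathcal{D}},\mathcal{D})\stackrel{p}{\to} 0$, the complementary event contributing at most $\mathbb{P}(\hat p\neq p)\to 0$ since $D\le 1$. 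For $\widehat{\mathcal{W}}$ I would invoke the identity $D(\mathcal{N}_1,\mathcal{N}_2)^2=(2m)^{-1}\|\Pi_{\mathcal{N}_1}-\Pi_{\mathcal{N}_2}\|_{(2)}^2$, valid when both subspaces have common dimension $m$, together with the elementary projector identity $\|\Pi_\mathcal{N}-\Pi_{\mathcal{N}'}\|_{(2)}=\|\Pi_{\mathcal{N}^\perp}-\Pi_{\mathcal{N}'^\perp}\|_{(2)}$; on $\{\hat p=p\}$ both $\Phi(\widehat{\mathcal{W}})$ and $\Phi(\mathcal{W})$ have dimension $p$ and are the orthogonal complements of $\Phi(\widehat{\mathcal{D}})$ and $\Phi(\mathcal{D})$, so $d(\widehat{\mathcal{W}},\mathcal{W})\stackrel{p}{\to} 0$ is immediate. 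As for $\widehat{\mathcal{M}}=\widehat{\mathcal{W}}\oplus\widehat{\mathcal{D}}$, its image under $\Phi$ is $\mathrm{span}\{\hat v_1,\ldots,\hat v_d\}=\mathbb{R}^d=\Phi(\mathcal{M})$, hence $d(\widehat{\mathcal{M}},\mathcal{M})=0$ pathwise.

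For (\ref{r1}) and (\ref{r2}) the key identity is $[\hat J^i]_T=\hat v_i^\top [M]_T\hat v_i$, to be compared with $\hat\lambda_i:=\hat v_i^\top \widehat{[M]}_T\hat v_i$, the $i$-th eigenvalue of $\widehat{[M]}_T$. Cauchy--Schwarz together with Assumption \ref{C1} gives $|[\hat J^i]_T-\hat\lambda_i|\le\|\widehat{[M]}_T-[M]_T\|_{(2)}\stackrel{p}{\to} 0$, while Weyl's inequality yields $\hat\lambda_i\stackrel{p}{\to}\lambda_i$, the $i$-th eigenvalue of $[M]_T$. Since $\lambda_1\ge\cdots\ge\lambda_p>0=\lambda_{p+1}=\cdots=\lambda_d$, the monotone ordering of the $\hat\lambda_i$ combined with the $o_P(1)$ perturbation produces (\ref{r1}), and (\ref{r2}) reduces to $[\hat J^i]_T\stackrel{p}{\to} 0$ for $i>\hat p$; a more self-contained route to the latter is the bound $\sup_{v\in\Phi(\widehat{\mathcal{D}}),\,\|v\|=1}\|[M]_T v\|\stackrel{p}{\to} 0$ that is already established inside the proof of Lemma \ref{conv-esp-D}.

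The main obstacle will be that individual eigenvectors $\hat v_i$ need not converge at all when $[M]_T$ has repeated eigenvalues, so the entire argument must be phrased at the level of \emph{subspaces} rather than of individual vectors; this is exactly what Lemma \ref{conv-esp-D} is designed to handle. A secondary, essentially bookkeeping, issue is the need to localize on the event $\{\hat p=p\}$, where Lemma \ref{conv-esp-D} applies literally, using the universal bound $D\le 1$ to absorb the contribution of the complementary event.
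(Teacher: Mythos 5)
Your proposal follows essentially the same route as the paper's proof: reduce to $\mathbb{R}^d$ via $\Phi$, identify $\Phi(\mathcal{D})=\ker[M]_T$ via Lemma \ref{deterministickernel} and $\Phi(\widehat{\mathcal{D}})$ as the span of the bottom eigenvectors, apply Lemma \ref{conv-esp-D} after localizing on $\{\hat p=p\}$, pass to complements for $\widehat{\mathcal{W}}$, and verify (\ref{r1})--(\ref{r2}) from the identities $[\hat J^i]_T=\hat v_i^\top[M]_T\hat v_i$ and $\hat\lambda_i=\hat v_i^\top\widehat{[M]}_T\hat v_i$ together with $\|\widehat{[M]}_T-[M]_T\|_{(2)}\stackrel{p}{\to}0$. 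The only cosmetic extras are the explicit projector/Hilbert--Schmidt identity and the Weyl step (the paper instead decomposes $[\hat J^i]_T-[\hat J^{i+1}]_T$ into three terms directly), plus the observation that $d(\widehat{\mathcal{M}},\mathcal{M})=0$ pathwise, which is a slight sharpening; none of these changes the argument's substance.
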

\begin{proof}
Recall the definition of the isomorphism $\Phi$ used in (\ref{metricsub1}). From Lemma \ref{deterministickernel}, we have
$\Phi(\mathcal{D}) = Ker([M]_T)$ and by the very definition of $\widehat{\mathcal{D}}$, we also have $\Phi(\widehat{\mathcal{D}}) = Ker(\widehat{[M]}_T)$. Thus, from Lemma \ref{conv-esp-D} above, we have

$$d(\mathcal{D},\widehat{\mathcal{D}})\stackrel{p}{\longrightarrow} 0.$$

Now, notice that
$$\mathbb{R}^d = \Phi(\mathcal{D})\oplus \Phi(\mathcal{W}) = \Phi(\widehat{\mathcal{D}})\oplus\Phi(\widehat{\mathcal{W}}).$$
Therefore, it follows from the definition of the metric $d$ that
$$d(\widehat{\mathcal{W}},\mathcal{W})\stackrel{p}{\longrightarrow} 0.$$
Since $\hat{p}$ is an integer-valued consistent estimator, we shall assume that $\hat{p}=p$. By the very definition, we know that

$$
\langle \hat{v}^{i}_T, \widehat{[M]}_T\hat{v}^{i}_T\rangle_{\mathbb{R}^d} \ge\langle \hat{v}^{i+1}_T, \widehat{[M]}_T\hat{v}^{i+1}_T\rangle_{\mathbb{R}^d}~a.s; 1\le i\le d-1.
$$
and
$$
[\hat{J}^i]_T = \langle \hat{v}^{i}_T, [M]_T \hat{v}^i_T\rangle_{\mathbb{R}^d} ~a.s; 1\le i\le d.
$$
Let us write

\begin{eqnarray*}
[\hat{J}^i]_T - [\hat{J}^{i+1}]_T &=& \big([\hat{J}^i]_T  -
\langle \hat{v}^{i}_T, \widehat{[M]}_T\hat{v}^{i}_T\rangle_{\mathbb{R}^d}\big) + \big(\langle \hat{v}^{i}_T, \widehat{[M]}_T\hat{v}^{i}_T\rangle_{\mathbb{R}^d} - \langle \hat{v}^{i+1}_T, \widehat{[M]}_T\hat{v}^{i+1}_T\rangle_{\mathbb{R}^d}\big)\\
& &\\
&+& \big(\langle \hat{v}^{i+1}_T, \widehat{[M]}_T\hat{v}^{i+1}_T\rangle_{\mathbb{R}^d}  - [\hat{J}^{i+1}]_T\big); 1 \le i\le d-1.
\end{eqnarray*}
By construction, $max_{1\le i\le d}|\hat{v}^i_T|$ is bounded in probability and $\|\widehat{[M]}_T - [M]_T\|_{\mathbf{F}}\rightarrow 0$ in probability as $\|\Pi\|\rightarrow 0$. Moreover, $\big(\langle \hat{v}^{i}_T, \widehat{[M]}_T\hat{v}^{i}_T\rangle_{\mathbb{R}^d} - \langle \hat{v}^{i+1}_T, \widehat{[M]}_T\hat{v}^{i+1}_T\rangle_{\mathbb{R}^d}\big)\ge 0~a.s$ and hence (\ref{r1}) holds true. The proof of (\ref{r2}) is similar.
\end{proof}

A straightforward consequence is the following result.

\begin{corollary}\label{WDCor}
Assume that hypotheses in Theorem \ref{WDTh} hold and let $Y\in \mathcal{M}$ be discretely-observed at $\{Y_{t_r}; 0\le r\le n\}$ over $[0,T]$, where $0=t_0 < t_1 \ldots< t_n=T$. Then, there exists $\alpha=(\alpha^{1}, \ldots, \alpha^{d})\in\mathbb{R}^d$ such that

\begin{equation}\label{olsconv}
max_{0\le r\le n}\Big|Y_{t_r} - \sum_{\ell=1}^{\hat{p}}\alpha^{\ell}\hat{J}^{\ell}_{t_r} - \sum_{k={\hat{p}+1}}^{d}\alpha^{k}\hat{J}^{k}_{t_r}\Big|\stackrel{p}{\to}0,
\end{equation}
as $max_{1\le i\le n}|t_r-t_{r-1}|\rightarrow 0$.
\end{corollary}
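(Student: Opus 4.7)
The plan is to observe that the estimated transformation is, realization by realization, an exact change of basis on $\mathcal{M}$, so that (\ref{olsconv}) reduces to a single linear-algebra identity. First I would note that $\widehat{\mathcal{V}}$ is an orthogonal $d\times d$ random matrix, because its rows are orthonormal eigenvectors of the symmetric matrix $\widehat{[M]}_T$. In particular $\widehat{\mathcal{V}}$ is invertible almost surely, and the map $M_t\mapsto \hat J_t=\widehat{\mathcal{V}}M_t$ has the a.s.\ pointwise inverse $\hat J_t \mapsto \widehat{\mathcal{V}}^\top \hat J_t = M_t$. Thus, although Theorem \ref{WDTh} yields only the subspace convergences $\widehat{\mathcal{W}}\stackrel{p}{\to}\mathcal{W}$ and $\widehat{\mathcal{D}}\stackrel{p}{\to}\mathcal{D}$, the full span $\widehat{\mathcal{M}}=\widehat{\mathcal{W}}\oplus\widehat{\mathcal{D}}$ actually coincides with $\mathcal{M}$ on a set of full probability.

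Next, since $Y\in\mathcal{M}=\mathrm{span}\{M^1,\ldots,M^d\}$, I would fix a deterministic vector $\beta=(\beta_1,\ldots,\beta_d)\in\mathbb{R}^d$ with $Y_t=\sum_{j=1}^d\beta_j M^j_t$ a.s.\ for every $t\in[0,T]$, and set $\alpha:=\widehat{\mathcal{V}}\beta\in\mathbb{R}^d$, split as $(\alpha^1,\ldots,\alpha^{\hat p},\alpha^{\hat p+1},\ldots,\alpha^d)$. Using the orthogonality relation $\widehat{\mathcal{V}}^\top\widehat{\mathcal{V}}=I$, a direct computation yields
$$
\sum_{\ell=1}^{\hat p}\alpha^\ell \hat J^\ell_{t_r}+\sum_{k=\hat p+1}^d \alpha^k \hat J^k_{t_r}
\;=\;\alpha^\top \hat J_{t_r}
\;=\;\beta^\top \widehat{\mathcal{V}}^\top\widehat{\mathcal{V}} M_{t_r}
\;=\;\beta^\top M_{t_r}
\;=\;Y_{t_r}
$$
for every $r\in\{0,1,\ldots,n\}$, almost surely. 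Consequently the maximum appearing in (\ref{olsconv}) vanishes a.s., and \emph{a fortiori} converges to $0$ in probability. The separation of the sum into indices $\ell\le\hat p$ and $k>\hat p$ is purely notational and serves to isolate the contribution coming from the estimated volatility subspace $\widehat{\mathcal{W}}$ from that of the drift subspace $\widehat{\mathcal{D}}$, which is the qualitative content of Theorem \ref{WDTh}.

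There is no genuine analytic obstacle to overcome: the corollary is essentially a pointwise linear-algebra consequence of (i) the invertibility of $\widehat{\mathcal{V}}$ and (ii) the existence of deterministic coordinates for $Y$ in the basis $\{M^1,\ldots,M^d\}$. The only mild technical subtlety is to record that the coefficient vector $\alpha$ is a random element of $\mathbb{R}^d$, measurable with respect to the observations through $\widehat{\mathcal{V}}$; this is entirely consistent with the statement once ``$\alpha\in\mathbb{R}^d$'' is read as a realization-dependent vector of coefficients, which is exactly the OLS interpretation suggested by the label \texttt{olsconv}.
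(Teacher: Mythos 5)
Your proof is correct, and in fact yields a \emph{sharper} conclusion than the statement: for the choice $\alpha=\widehat{\mathcal V}\beta$, the quantity inside the absolute value in \eqref{olsconv} is identically zero, since $\widehat{\mathcal V}^\top\widehat{\mathcal V}=I_d$ pointwise and $Y=\beta^\top M$. This is a genuinely different route from the paper. The paper's argument embeds $\{M^1,\ldots,M^d,\hat J^1,\ldots,\hat J^d\}$ in a finite-dimensional space $\mathcal H\subset\mathcal X$, invokes the subspace convergence $d(\widehat{\mathcal M},\mathcal M)\stackrel{p}{\to}0$ from Theorem \ref{WDTh}, and then pushes this convergence through the canonical (homeomorphic) isomorphism $\Phi:\mathcal H\to\mathbb R^m$ to produce a projection-type coefficient $\alpha$; it therefore only delivers the convergence stated in \eqref{olsconv}. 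By contrast, you bypass Theorem \ref{WDTh} entirely by noting that $\widehat{\mathcal V}$ is orthogonal for \emph{every} realization and every $n$ (its rows are an orthonormal eigenbasis of the symmetric matrix $\widehat{[M]}_T$, by Assumption \ref{C1}), so $M\mapsto\hat J$ is an exact invertible change of basis and the representation of $Y$ in the new basis is exact, not merely asymptotic. What the paper's more indirect argument buys is perhaps robustness to situations where the transformation is only approximately orthogonal; under the paper's own hypotheses, your observation is cleaner. Your caveat about $\alpha$ being realization-dependent is apt: both proofs produce an $n$-dependent random $\alpha$, and the paper's subsequent OLS usage makes clear this is the intended reading.
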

\begin{proof}
Let us equip $\mathcal{X}$ with the topology of the uniform convergence in probability. Let $\mathcal{H}$ be the smallest finite-dimensional subspace of $\mathcal{X}$ which contains $\{M^1,\ldots, M^d;\hat{J}^1, \ldots, \hat{J}^d\}$. Let $\Phi:\mathcal{H}\rightarrow \mathbb{R}^{m}$ be the canonical isomorphism for some $m>0$. We notice that $\Phi$ is actually an homeomorphism when $\mathcal{H}$ is endowed with the subspace topology. From Theorem \ref{WDTh} and the definition of the metric $d$, we know that

\begin{equation}\label{progine}
d(\mathcal{M},\widehat{\mathcal{M}}) = D(\Phi(\mathcal{M}),\Phi(\widehat{\mathcal{M}})) = \sqrt{2d}\sup_{\|v\|_{\mathbb{R}^d}=1} \|\mathcal{T}_{\Phi(\mathcal{M})}v - \mathcal{T}_{\Phi(\widehat{\mathcal{M}})}v\|_{\mathbb{R}^d}\stackrel{p}{\to}0
\end{equation}
as $\|\Pi\|\rightarrow 0$, where $\mathcal{T}_A$ denotes the projection onto a closed subspace $A\subset \mathbb{R}^d$. Then from (\ref{progine}) and using the fact that $\Phi$ is an homeomorphism, we get the existence of $\alpha=(\alpha^{1}, \ldots, \alpha^{d})\in\mathbb{R}^d$ such that

$$\Big|\Phi(Y) - \sum_{\ell=1}^{\hat{p}}\alpha^{\ell}\Phi(\hat{J}^{\ell}) - \sum_{k={\hat{p}+1}}^{d}\alpha^{k}\Phi(\hat{J}^{k})\Big|\stackrel{p}{\to}0$$
as $\|\Pi\|\rightarrow 0$ which implies assertion in (\ref{olsconv}).
\end{proof}

Under the assumptions of Theorem \ref{WDTh}, if $Y\in \mathcal{M}$ is a discretely-observed semimartingale at $\{Y_{t_k}; 1\le k\le n\}$ over $[0,T]$, then we shall use Corollary \ref{WDCor} to estimate by OLS

$$
\hat{\alpha} :=  \underset{ \alpha\in \mathbb{R}^d}{\operatorname{argmin}}\sum_{\ell=1}^n \Big|Y_{t_\ell} - \hat{\mathcal{V}}M_{t_\ell}\cdot \alpha^\top\Big|^2,
$$
the regression coefficients which provide us the precise linear contribution of non-null quadratic variation and pure drift components in $\mathcal{W}$ and $\mathcal{D}$, respectively. In this case, the following linear combination

$$
\hat{Y}_{k} := \sum_{\ell=1}^{\hat{p}} \hat{\alpha}^{\ell}\hat{J}^{\ell}_{t_k} + \sum_{r=\hat{p}+1}^d \hat{\alpha}^{r} \hat{J}^{r}_{t_k}; i=1\ldots, d, k=0, \ldots, n.
$$
depicts $\{Y_{t_r}; 0\le r\le n\}$ into elements of $\widehat{\mathcal{W}}\oplus \widehat{\mathcal{D}}$ over the sample $\{Y_{t_k}; 0\le k\le n\}$ in $[0,T]$. The estimation of the factor spaces $(\mathcal{W},\mathcal{D})$ provides a tool to optimal asset allocation/dimension reduction in high-dimensional portfolios composed by semimartingales, a topic which will be further explored in a future paper.

\section{Estimation of Finite-Dimensional Invariant Manifolds}\label{estimationFDIM}
In this section, we apply the theory developed in previous sections to present a methodology for the estimation of finite-dimensional invariant manifolds related to space-time data generated by stochastic PDEs of the form

\begin{equation}\label{spde1}
dr_t = \big(A(r_t) + F(r_t)\big)dt + \sum_{j=1}^m\sigma^j(r_t)dB^j_t;t\ge 0;r_0=h\in E,
\end{equation}
where $A$ is an infinitesimal generator of a $C_0$-semigroup on a separable Hilbert space $E$ which we assume to be a subspace of absolutely continuous functions $g:K\rightarrow \mathbb{R}$ where for simplicity of exposition we work with the one-dimensional space\footnote{Indeed, it is not too difficult to extend the results of this section to the multi-dimensional case where $K$ is a compact subset of $\mathbb{R}^n$. This type of flexibility is important to treat more complex space-time data such as volatility surfaces in Financial Engineering.} set $K=[a,b]$ where $-\infty< a\le x \le b < +\infty$. The vector fields $F,\sigma^i; i=1,\ldots, m$ are assumed to be Lipschitz and the dimension $m$ is fixed.

\subsection{Splitting the invariant manifold}
Let us now introduce the basic geometric objects related to the stochastic PDE~(\ref{spde1}) that we are interested in estimating. We refer the reader to Tappe~\cite{tappe1} for a very clear treatment of these objects.
\begin{definition}
A family $(\mathcal{V}_t)_{t\ge 0}$ of affine manifolds in $E$ is called a foliation generated by a finite-dimensional subspace $V\subset E$ if there exists $\phi\in C^1(\mathbb{R}_+; E)$ such that

$$\mathcal{V}_t= \phi(t) + V;~t\ge 0.$$
The map $\phi$ is a parametrization of $(\mathcal{V}_t)_{t\ge 0}$.
\end{definition}

\begin{remark}
We notice that the parametrizations of $(\mathcal{V}_t)_{t\ge 0}$ are not unique, but for any distinct parametrizations $\phi^1$ and $\phi^2$ we have $\phi^1(t)-\phi^2(t)\in V$ for every $t\in [0,T]$.
\end{remark}

In the remainder of this paper, $(\mathcal{V}_t)_{t\ge 0}$ denotes a foliation generated by a finite-dimensional subspace.
\begin{definition}
The foliation $(\mathcal{V}_t)_{t\ge 0}$ of affine manifolds is invariant w.r.t the stochastic PDE~(\ref{spde1}) if for every $t_0\in \mathbb{R}_+$ and $h\in \mathcal{V}_{t_0}$ we have

$$\mathbb{P}\{r_t\in \mathcal{V}_{t_0+t},~\hbox{for all}~ t\ge 0\}=1$$
for $r_0=h$.
\end{definition}

The above objects lead us to the following definition which is the main object of statistical study in this section.

\begin{definition}
We say that the stochastic PDE~(\ref{spde1}) has an affine realization generated by a finite-dimensional subspace $V\subset E$ if for each $h_0\in dom~(A)$ there exists a foliation $(\mathcal{V}^{h_0}_t)_{t\ge 0}$ generated by $V$ with $h_0\in \mathcal{V}^{h_0}_0$ which is invariant w.r.t~(\ref{spde1}). An affine realization with a generator $V$ is called minimal, if for another affine realization generated by some subspace $W$ we have $V\subset W$.
\end{definition}

\begin{remark}
Suppose that the stochastic PDE~(\ref{spde1}) has an affine realization generated by a subspace $V$. We recall that for each $h_0\in dom~(A)$ the foliation $(\mathcal{V}^{h_0}_t)_{t\ge 0}$ generated by $V$ is uniquely defined. See e.g~[Lemma 2.7~\cite{tappe1}].
\end{remark}
See Section \ref{expSPDE} for a brief discussion on affine realizations in the context of Mathematical Finance. Throughout this paper, we assume that the stochastic PDE data generating process satisfies the following assumption.

\

\noindent \textbf{Assumption (A1):} The stochastic PDE~(\ref{spde1}) has an affine realization generated by a finite-dimensional subspace.

\

Let us now introduce the basic operators which will encode the underlying loading factors of the stochastic PDE that we are interested in estimating. We fix once and for all a terminal time $0< T < \infty$, $r_0\in dom~(A)$, the minimal subspace generator $V$ of (\ref{spde1}) spanned by linearly independent vectors $\{w_1,\ldots,w_d \}$  and a parametrization $\phi\in C([0,T];E)$ with null quadratic variation $[\phi(u)]_T=0;~u\in [a,b]$. Under Assumption~(A1), the stochastic PDE~(\ref{spde1}) has a strong solution. From the reproducing kernel property of $E\subset C([a,b];\mathbb{R})$, the evaluation map $\tau_u: f\mapsto f(u)$ is a bounded linear functional and therefore point-wise evaluation of the stochastic PDE is well-defined for every point-space and the following representation holds

\begin{equation}\label{pointspde}
r_t(u) = r_0(u) + \int_0^t \big(A(r_s)(u) + F(r_s)(u)\big)ds + \sum_{i=1}^m\int_0^t\sigma^i(r_s)(u)dB^i_s,
\end{equation}

\noindent where we set $r_t(u):=\tau_ur_t$ for $0\le t\le T$ and $u\in [a,b]$. Let us consider the following kernels

$$\sigma_t(u,v):=\sum_{j=1}^m\sigma^j(r_t)(u)\sigma^j(r_t)(v);0\le t\le T,$$

$$Q_{T}(u,v):=[r(u),r(v)]_T=\int_0^{T}\sigma_s(u,v)ds,~u,v\in [a,b].$$

The above kernels induce random linear operators $Q_{T}$ and $\sigma_t$ defined almost everywhere by
$$(Q_{T} f)(\cdot) := \langle Q_{T}(\cdot, ), f \rangle_E; f\in E.$$

$$\sigma_t f(\cdot):= \langle \sigma_t(\cdot,),f\rangle_E;~f\in E, 0\le t\le T.$$

\noindent By the very definition, the random linear operator $Q_{T}$ can be written as
$$
(Q_{T}f)(u) = \int_0^{T}(\sigma_sf)(u)ds;~f\in E.
$$
where we denote $\mathcal{Q}:=Range~Q_T$. In the remainder of this article, we denote by $\mathcal{N}$ the supplementary subspace of $\mathcal{Q}$ in the minimal subspace $V$.

From~Assumption (A1), we know (see e.g Th. 2.11 and (2.27) in \cite{tappe1}) that there exists a truly $d$-dimensional semimartingale $Z=(Z^1,\ldots, Z^p)$ which realizes the strong solution~(\ref{pointspde}) as follows

\begin{equation}\label{ss}
r_t(u)=\phi_t(u) + \sum_{i=1}^dZ^i_t w_i(u);~0\le t\le T,u\in [a,b].
\end{equation}

\begin{definition}
We say that the stochastic PDE in~(\ref{spde1}) admits a finite-dimensional realization (FDR) if for each $h\in dom~(A)$ there exists a truly $d$-dimensional semimartingale $Z\in \mathcal{S}^d$, a parametrization $\phi\in C([0,T];E)$ and a linearly independent set $\{w_1\ldots, w_d\}\subset E$ which realize~(\ref{ss}).
\end{definition}

See~e.g~\cite{bjork1,tappe1,filipo,filipo5} for more details on this affine construction of the stochastic PDE. Representation~(\ref{ss}) is not unique but it will be the basis for our splitting scheme as follows. At first, in order to apply the spectral analysis in previous sections, we will assume the following hypothesis on the stochastic PDE (\ref{spde1}):

\

\noindent \textbf{Assumption (A2):} For each initial condition $h\in dom~(A)$, there exists a factor representation $Z$ which realizes (\ref{ss}) and it satisfies Assumption \ref{A2}.

\

In the sequel, if $L\in \mathbb{M}_{d\times d}$ and $\eta=(\eta_1,\ldots,\eta_d)$ is a list of real-valued functions on $[a,b]$, then $\eta(x) = (\eta_1(x), \ldots, \eta_d(x))\in \mathbb{M}_{d\times 1}$ and we set $L\eta$ meaning the $\mathbb{R}^d$-valued function $x\mapsto L\eta(x)$.

\begin{remark}\label{randomrep1}
Let $r_t(u)=\phi_t(u) + \sum_{i=1}^dZ^i_t w_i(u);~0\le t\le T,u\in [a,b]$ be a representation of the FDR of (\ref{spde1}). Let $\mathcal{A}\in \mathbb{M}_{d\times d}$ be a non-singular random matrix. Then

\begin{equation}\label{randomrep2}
r_t(x) = \phi_t(x) + \sum_{j=1}^d Y^j_t \varphi_j(x); 0\le t\le T, x\in [a,b]
\end{equation}
where $\varphi = (\mathcal{A}^{-1})^\top w$ is a random basis for $V$ and $Y_\cdot=\mathcal{A}Z_\cdot\in \mathcal{X}^d$.
\end{remark}

We can actually write $Q_{T}$ in terms of any representation~(\ref{ss}) as follows

\begin{equation}\label{ss1}
(Q_Tf)(u)=\sum_{i,j=1}^d\langle f,w_{i}\rangle_E w_{j}(u)[Z^i, Z^j]_T;~f\in E;u\in [a,b],
\end{equation}
and, moreover, the following remark holds.

\begin{remark}\label{invprop}
From Lemma \ref{WZ}, one can easily see that under Assumption (A2), any truly $d$-dimensional factor process realizing (\ref{ss}) (or (\ref{randomrep2})) will satisfy Assumption \ref{A2}.
\end{remark}

In the sequel, we need to introduce new notation. For a given $Z\in \mathcal{X}^d$ satisfying Assumptions \ref{A1} and \ref{A2}, we denote $\mathcal{M}(Z):=\text{span}~\{Z^1,\ldots, Z^d \}$, $\widetilde{\mathcal{M}}(Z):=\mathcal{M}(Z)/\mathcal{D}(Z)$ where $\mathcal{D}(Z):=\{X\in \mathcal{M}(Z); [X]_\cdot=0~a.s ~\text{on}~[0,T]\}$ and the quotient space is defined by the equivalence relation (\ref{eqrel}) over $[0,T]$. We stress that $\mathcal{M}(Z), \mathcal{D}(Z)$ and $\widetilde{\mathcal{M}}(Z)$ are $\mathcal{M}, \mathcal{D}$ and $\widetilde{\mathcal{M}}$, respectively, which are defined in (\ref{quot}) for the specific choice $M=Z$.

In practice, we are not able to observe any semimartingale factor $Z=(Z^1,\ldots, Z^d)$ of a stochastic PDE admiting a FDR. But it will be very important for our estimation strategy to identify the pair $(\mathcal{Q}, \mathcal{N})$ in terms of the random matrix $[Z]_T$, or more precisely, in terms of the quadratic variation of random rotations of $Z$. Next, we recall the following result.

\begin{lemma}\label{Qoperator}
Let $r$ be the stochastic PDE (\ref{spde1}) satisfying Assumptions (A1-A2) and admitting a FDR generated by the minimal foliation $\mathcal{V}^h_t =\{\phi_t + V\};0\le t\le T$ where $dim~V=d$ and $r_0=h$. Then, we shall represent (\ref{spde1}) as follows


\begin{equation}\label{goodrepr}
r_t = \phi_t + \sum_{i=1}^p Y^i_t\varphi_i + \sum_{j=p+1}^d Y^j_t\varphi_{j}; 0\le t\le T,
\end{equation}
where $Y$ is a truly $d$-dimensional semimartingale $Y$ satisfying $\mathcal{W}(Y) = \text{span} \{Y^1,\ldots, Y^p\}$, $\mathcal{D}(Y) = \text{span} \{Y^{p+1}, \ldots, Y^d\}$ and $V=\mathcal{Q}\oplus \mathcal{N}$, where $\mathcal{Q}=\text{span}~\{\varphi_{1},\ldots, \varphi_{p}\}$ and $\mathcal{N}=\text{span}~\{\varphi_{p+1},\ldots, \varphi_{d}\}$.
\end{lemma}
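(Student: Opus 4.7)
The plan is to pass from the generic FDR representation provided by Assumption (A1) to one in which the factor process is ``diagonalized'' with respect to its quadratic variation. By Assumption (A2) I first fix a truly $d$-dimensional semimartingale $Z=(Z^1,\ldots,Z^d)$ that realizes (\ref{ss}), so that $r_t = \phi_t + \sum_i Z^i_t w_i$ with $Z$ satisfying Assumption \ref{A2} (Remark \ref{invprop}). I then apply Proposition \ref{randomV} to $Z$ to obtain a random orthogonal matrix $\mathcal{V}$ whose rows are orthonormal eigenvectors of $[Z]_T$, ordered by decreasing eigenvalue. Setting $Y := \mathcal{V} Z$, Proposition \ref{randomV} guarantees that $\{Y^1,\ldots,Y^p\}$ spans $\mathcal{W}(Z)$ while $\{Y^{p+1},\ldots,Y^d\}$ spans $\mathcal{D}(Z)$. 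Because $\mathcal{V}$ is non-singular, $\mathcal{M}(Y)=\mathcal{M}(Z)$, hence $\mathcal{D}(Y)=\mathcal{D}(Z)$ and $\text{span}\{Y^1,\ldots,Y^p\}$ is a valid choice of $\mathcal{W}(Y)$ in the splitting $\mathcal{M}(Y)=\mathcal{W}(Y)\oplus\mathcal{D}(Y)$.

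Next, invoking Remark \ref{randomrep1} with the non-singular matrix $\mathcal{A}=\mathcal{V}$, I define the random basis $\varphi := (\mathcal{V}^{-1})^\top w$ and obtain the representation
$$r_t(x) = \phi_t(x) + \sum_{j=1}^d Y^j_t\,\varphi_j(x),\qquad 0\le t\le T,\; x\in[a,b].$$
Note that $\varphi_1,\ldots,\varphi_d$ is a basis of the (deterministic) minimal subspace $V$ since they are obtained from the basis $w_1,\ldots,w_d$ of $V$ by a non-singular transformation. The first halves of the desired conclusion -- namely the identifications $\mathcal{W}(Y)=\text{span}\{Y^1,\ldots,Y^p\}$ and $\mathcal{D}(Y)=\text{span}\{Y^{p+1},\ldots,Y^d\}$ -- are thus secured.

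It remains to identify $\mathcal{Q}$ and exhibit a complement $\mathcal{N}$ in $V$. Since $\phi$ has null quadratic variation and the $\varphi_j$ are $t$-independent, a direct computation (analogous to (\ref{ss1})) gives
$$Q_T(u,v)=\sum_{i,j=1}^d \varphi_i(u)\varphi_j(v)\,[Y^i,Y^j]_T.$$
For $j>p$ the process $Y^j$ lies in $\mathcal{D}(Y)$, hence is continuous of bounded variation, so $[Y^i,Y^j]_T=0$ whenever $i>p$ or $j>p$. Therefore $Q_T f\in \text{span}\{\varphi_1,\ldots,\varphi_p\}$ for every $f\in E$, giving $\mathcal{Q}\subseteq\text{span}\{\varphi_1,\ldots,\varphi_p\}$. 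The main technical point is the reverse inclusion: Lemma \ref{WZ} applied to $(Y^1,\ldots,Y^p)$, together with Assumption \ref{A2}, ensures that the $p\times p$ block $([Y^i,Y^j]_T)_{1\le i,j\le p}$ is almost surely non-singular, and the linear independence of $\varphi_1,\ldots,\varphi_p$ in $E$ implies that the map $f\mapsto (\langle f,\varphi_j\rangle_E)_{j=1}^p$ is surjective onto $\mathbb{R}^p$. Composing with the invertible block yields that every element of $\text{span}\{\varphi_1,\ldots,\varphi_p\}$ is of the form $Q_Tf$ for some $f\in E$, so $\mathcal{Q}=\text{span}\{\varphi_1,\ldots,\varphi_p\}$. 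Setting $\mathcal{N}:=\text{span}\{\varphi_{p+1},\ldots,\varphi_d\}$, the linear independence of the full basis $\varphi_1,\ldots,\varphi_d$ delivers $V=\mathcal{Q}\oplus\mathcal{N}$ and completes the proof. The only subtle step is this reverse inclusion, where one must combine the almost sure full-rank structure of the $\mathcal{W}$-block of $[Y]_T$ with the surjectivity of inner-product evaluations against the $\varphi_i$'s.
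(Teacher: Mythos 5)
Your proof is correct and follows essentially the same architecture as the paper's: start from the FDR representation $(Z,w)$ guaranteed by Assumptions (A1--A2), pass to a new factor process $Y$ whose first $p$ and last $d-p$ components span $\mathcal{W}(Z)$ and $\mathcal{D}(Z)$ respectively, and read off a companion basis $\varphi$ of $V$ from the corresponding change of variables. The two differences are instructive. First, where the paper takes an \emph{arbitrary} deterministic basis adapted to the splitting $\mathcal{M}(Z)=\mathcal{W}(Z)\oplus\mathcal{D}(Z)$ and introduces the change-of-basis isomorphism $I$, you instantiate the construction concretely via Proposition~\ref{randomV}, i.e.\ by a fixed realization $\mathcal{V}(\omega^*)$ of the eigenvector matrix of $[Z]_T$. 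This is harmless but narrower than necessary, and you should be explicit that you are working with a \emph{fixed realization} $\omega^*\in\Omega^*$ (so that $\mathcal{V}(\omega^*)$ is a deterministic matrix and $Y=\mathcal{V}(\omega^*)Z$ is genuinely a semimartingale); calling $\mathcal{V}$ a ``random orthogonal matrix'' and then writing $Y:=\mathcal{V}Z$ suggests a non-adapted process. Second, and more valuably, you spell out the identification $\mathcal{Q}=\mathrm{span}\{\varphi_1,\ldots,\varphi_p\}$ that the paper dispatches with ``we clearly see'': the inclusion $\mathcal{Q}\subseteq\mathrm{span}\{\varphi_1,\ldots,\varphi_p\}$ from $[Y^i,Y^j]_T=0$ whenever $j>p$, and the reverse inclusion from the a.s.\ invertibility of the $p\times p$ block $([Y^i,Y^j]_T)_{1\le i,j\le p}$ (via Lemma~\ref{WZ} applied to $(Y^1,\ldots,Y^p)$) composed with the surjectivity of $f\mapsto(\langle f,\varphi_j\rangle_E)_{j\le p}$. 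That last point is exactly the implicit content of the paper's claim, and filling it in is a genuine improvement in rigor.
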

\begin{proof}
By assumption, there exists a truly $d$-dimensional semimartingale $Z = (Z^ 1, \ldots, Z^d)$ satisfying Assumption \ref{A2} and a basis $w = \{w_i\}_{i=1}^d$ for $V$ such that

$$r_t = \phi_t+ \sum_{i=1}^d Z^i_t w_i; 0\le t\le T.$$

From (\ref{ss1}), we have $\mathcal{Q}\subset V~a.s$ so that we shall consider the random operator $Q_T$ restricted to $V$ as follows $Q_T: \Omega\times V\rightarrow V$. Moreover, from (\ref{ss1}) we readily see that the random matrix of the linear operator $Q_T$ is given by $\{[Z^i,Z^j]_T; 1\le i, j\le d\}$ for any pair $(Z,w)$ of latent semimartingale representation $Z$ and a basis $w$ for $V$. By Lemma \ref{WZ}, we have $dim~\mathcal{Q} = dim~\widetilde{\mathcal{M}}(Z)~a.s$. Let $Y = \{Y^1, \ldots,Y^d\}$ be a truly $d$-dimensional semimartingale such that $\{Y^1, \ldots, Y^p \}$ is a basis for $\mathcal{W}(Z)$ and $\{Y^{p+1}, \ldots, Y^d\}$ is a basis for $\mathcal{D}(Z)$ where $p=dim~\mathcal{Q}$. Then $\text{span} \{Y^1, \ldots, Y^d\}=\mathcal{M}(Z)$ and $Y$ satisfies Assumptions \ref{A1} and \ref{A2}. Let $I:\mathcal{M}(Z)\rightarrow \mathcal{M}(Z)$ be the linear isomorphism given by the change of basis from $Z$ to $Y$. If $[I]^Z_Y = \{a_{ij}; 1\le i,j\le d\}$ is the matrix of $I$, then we shall write

\begin{equation}\label{secrep}
r_t = \phi_t + \sum_{i=1}^dY^i_t \varphi_i ; 0\le t\le T,
\end{equation}
where $\varphi_j := \sum_{i=1}^da_{ij}w_i; 1\le j\le d$. By writing $Q_T$ in terms of the basis $\{\varphi_j\}_{j=1}^d$ and using (\ref{secrep}), we clearly see that $\mathcal{Q}= \text{span}~\{\varphi_1, \ldots, \varphi_p\}$. By taking $\mathcal{N}=\text{span} \{\varphi_{p+1}, \ldots, \varphi_d\}$, we then conclude (\ref{goodrepr}).
\end{proof}

The main message of Lemma \ref{Qoperator} is the following. When the stochastic PDE is projected onto $\mathcal{Q}$ ($\mathcal{N}$), then the associated latent factors are non-null quadratic variation (bounded variation) semimartingales. We remark that the form of the FDRs (\ref{goodrepr}) has already been derived in Bjork and Land\'en \cite{bjork1} and Filipovic and Teichmann \cite{filipo5} in the context of HJM models. Lemma \ref{Qoperator} provides an explicit splitting for $V$ by separating the loading factors which generate $\mathcal{Q}$ from its complementary subspace $\mathcal{N}$ attached to their associated spaces $\mathcal{W}(Y)$ and $\mathcal{D}(Y)$, respectively.

Summing up the above results, we arrive at the following identification result.
\begin{proposition}\label{mainREP}
Let $r$ be the stochastic PDE (\ref{spde1}) satisfying Assumptions (A1-A2). For a given $h\in dom~(A)$, let $\mathcal{V}^h_t = \phi_t + V; 0\le t\le T$ be the minimal foliation generated by some $V$ such that $r_0=h \in \mathcal{V}^h_0$. Let

$$r_t = \phi_t + \sum_{i=1}^d Z^i_t\eta_i; 0\le t \le T,$$
be a factor semimartingale representation, where $V =~\text{span}~\{\eta_1, \ldots, \eta_d\}$ and $Z$ satisfies Assumptions \ref{A1} and \ref{A2}. Let $\mathcal{A}\in\mathbb{M}_{d\times d}$ be a nonsingular random matrix. Let $\varphi(x) = (\mathcal{A}^{-1})^\top\eta(x); x\ge 0$ and $Y_t =\mathcal{A}Z_t; 0\le t\le T$. Let $\mathcal{L}:\Omega\rightarrow \mathbb{M}_{d\times d}$ be the random matrix whose rows are given by
$\mathcal{L}_i = v_i; 1\le i\le d$ where $\{v_1, \ldots, v_d\}$ is an orthonormal eigenvector set of $[Y]_T$ associated to the ordered eigenvalues $q_1\ge q_2\ge \ldots \ge q_d~a.s$. Then

\begin{equation}\label{goodN1}
\mathcal{Q}=~\text{span}~\Big\{(\mathcal{L}\varphi)_{1},\ldots, (\mathcal{L}\varphi)_{p}\Big\}~a.s, \mathcal{N}=~\text{span}~\Big\{(\mathcal{L}\varphi)_{p+1},\ldots, (\mathcal{L}\varphi)_{d} \Big\}~a.s,
\end{equation}
and
\begin{equation}\label{goodN2}
 \mathcal{W}(Y)=~\text{span}~\Big\{(\mathcal{L}Y)^{1},\ldots, (\mathcal{L}Y)^{p}\Big\}, \mathcal{N}(Y)=~\text{span}~\Big\{(\mathcal{L}Y)^{p+1},\ldots, (\mathcal{L}Y)^{d} \Big\}.
\end{equation}
\end{proposition}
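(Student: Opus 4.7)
The plan is to reduce the claim to Proposition \ref{randomV} applied to the rotated process $Y=\mathcal{A}Z$, after first verifying that the factor/loading decomposition of the PDE is invariant under the orthogonal change of coordinates encoded by $\mathcal{L}$.

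First I would establish the auxiliary representation
\begin{equation*}
r_t = \phi_t + \sum_{i=1}^d (\mathcal{L}Y)^i_t\,(\mathcal{L}\varphi)_i, \qquad 0\le t\le T.
\end{equation*}
Starting from the given representation $r_t=\phi_t+\sum_i Z^i_t\eta_i$ and the definitions $\varphi=(\mathcal{A}^{-1})^\top\eta$, $Y=\mathcal{A}Z$, one checks by matrix algebra that $Y^\top\varphi=Z^\top\mathcal{A}^\top(\mathcal{A}^{-1})^\top\eta=Z^\top\eta$; since $\mathcal{L}^\top\mathcal{L}=I$, we further have $Y^\top\varphi=(\mathcal{L}Y)^\top(\mathcal{L}\varphi)$. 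By Remark \ref{randomrep1} and Remark \ref{invprop}, $Y$ is a truly $d$-dimensional process satisfying Assumption \ref{A2}, and the same holds for the rotated process $\mathcal{L}Y$ because $\mathcal{L}$ is $\mathcal{F}_T$-measurable and orthogonal. In particular $\mathcal{M}(\mathcal{L}Y)=\mathcal{M}(Y)$ (and analogously for $\mathcal{W},\mathcal{D}$), so the $\omega$-wise relabeling does not change the intrinsic spaces.

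Next, the identity (\ref{goodN2}) is exactly the conclusion of Proposition \ref{randomV} applied to $Y$: the last $d-p$ rows of $\mathcal{L}Y$ live in $\mathcal{D}(Y)$ (the eigenvectors lie in $\ker[Y]_T$, and Lemma \ref{WZ} together with the identity (\ref{polar}) turn this kernel condition into null quadratic variation in $\mathcal{M}(Y)$), while the first $p$ rows are linearly independent modulo $\mathcal{D}(Y)$ and hence form a basis of $\mathcal{W}(Y)$.

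For (\ref{goodN1}) I would compute $Q_T$ in the rotated representation: using formula (\ref{ss1}) with basis $\{(\mathcal{L}\varphi)_i\}$ and factors $\{(\mathcal{L}Y)^i\}$,
\begin{equation*}
Q_Tf=\sum_{i,j=1}^d\langle f,(\mathcal{L}\varphi)_i\rangle_E\,(\mathcal{L}\varphi)_j\,[(\mathcal{L}Y)^i,(\mathcal{L}Y)^j]_T.
\end{equation*}
The quadratic variation matrix of $\mathcal{L}Y$ is $\mathcal{L}[Y]_T\mathcal{L}^\top=\operatorname{diag}(q_1,\ldots,q_d)$ by the spectral decomposition built into $\mathcal{L}$, with $q_i>0$ for $i\le p$ and $q_i=0$ for $i>p$ (Lemma \ref{WZ} guarantees exactly $p$ nonzero eigenvalues a.s.). Therefore
\begin{equation*}
Q_Tf=\sum_{i=1}^p q_i\,\langle f,(\mathcal{L}\varphi)_i\rangle_E\,(\mathcal{L}\varphi)_i,
\end{equation*}
so $\mathcal{Q}=\operatorname{range}Q_T=\operatorname{span}\{(\mathcal{L}\varphi)_1,\ldots,(\mathcal{L}\varphi)_p\}$, and since $\{(\mathcal{L}\varphi)_i\}_{i=1}^d$ is a basis of $V$ (the matrix $\mathcal{L}$ being orthogonal and $\{\varphi_i\}$ being a basis), the remaining vectors furnish the complementary splitting $\mathcal{N}=\operatorname{span}\{(\mathcal{L}\varphi)_{p+1},\ldots,(\mathcal{L}\varphi)_d\}$ given by Lemma \ref{Qoperator}.

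The only delicate point is the bookkeeping around the random orthogonal matrix $\mathcal{L}$: although $\mathcal{L}$ is merely $\mathcal{F}_T$-measurable (not adapted), the bracket $[\mathcal{L}Y]_T=\mathcal{L}[Y]_T\mathcal{L}^\top$ should be interpreted exactly in the ``random coefficient'' sense of (\ref{rqf}) from Section \ref{randomdirections}, which is precisely the framework in which Proposition \ref{randomV} is formulated; once this is recognized, the three identifications above follow without further analytical work.
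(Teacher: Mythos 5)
Your proposal is correct and follows essentially the same route as the paper's (very brief) proof: both rest on the orthogonality identity $Z^\top\eta = Y^\top\varphi = (\mathcal{L}Y)^\top(\mathcal{L}\varphi)$, apply Proposition \ref{randomV} to $Y$ to obtain (\ref{goodN2}), and use the representation (\ref{ss1}) of $Q_T$ in the rotated basis (which is the content of Lemma \ref{Qoperator}) to obtain (\ref{goodN1}). You simply unpack the diagonalization $\mathcal{L}[Y]_T\mathcal{L}^\top=\operatorname{diag}(q_1,\ldots,q_d)$ explicitly where the paper cites Lemma \ref{Qoperator}.
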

\begin{proof}
This is a straightforward consequence of Proposition \ref{randomV}, Lemma \ref{Qoperator} and the identity
$$\langle Z_t,\eta(x) \rangle_{\mathbb{R}^d} =\langle \mathcal{A}Z_t,(\mathcal{A}^{-1})^\top\eta(x) \rangle_{\mathbb{R}^d} = \langle \mathcal{L}\mathcal{A}Z_t,\mathcal{L}(\mathcal{A}^{-1})^\top\eta(x) \rangle_{\mathbb{R}^d},$$
$0\le t\le T, x\ge 0$ due to the orthogonality of the random matrix $\mathcal{L}$.
\end{proof}

\subsection{Preliminaries on Factor models}\label{prpca}
The goal of this section is to describe an estimation methodology for the pair $(\mathcal{Q},\mathcal{N})$ which generates invariant foliations for stochastic PDEs of the form (\ref{spde1}). The methodology will be inspired by the so-called Factor Analysis developed in the Econometrics literature (see e.g \cite{stock},~\cite{bai}, \cite{bai1}, \cite{forni}), but with some fundamental differences: \textbf{(a)} Unlike the classical discrete Factor Analysis, we are working with an underlying continuous time process sampled in high-frequency at discrete points in time and space. \textbf{(b)} The spaces $(\mathcal{Q}, \mathcal{N})$ cannot be identified by applying standard techniques from Factor Analysis due to the rather distinct behavior between quadratic variation and covariance matrices in the high-frequency setup. \textbf{(c)} More importantly, the factor analysis introduced here allows us to reduce and rank the underlying semimartingale factors in terms of quadratic variation rather than covariance, including bounded variation components.

Throughout this section, Assumptions~(A1-A2) are in force. We also assume the underlying state-space $E$ is the Sobolev space of absolutely continuous functions $f:[a,b]\rightarrow \mathbb{R}$ such that

$$\|f\|^2_E:= |f(a)|^2 + \int_a^b |f'(x)|^2\mu(dx)< \infty$$
where $\mu$ is absolutely continuous w.r.t Lebesgue measure (see e.g \cite{filipo}) and we write $\langle\cdot, \cdot\rangle_E$ to denote the associated inner product. For simplicity of exposition, we work with the closed subspace of $E$ formed by functions $f(a)=0$ and we set $\frac{d\mu}{dx}= 1$. With a slight abuse of notation we denote it by $E$.

We are going to fix the minimal invariant foliation $\mathcal{V}_t = \phi_t + V$ generated by a $d$-dimensional subspace $V$ equipped with a basis $\{\lambda_1,\ldots, \lambda_d\}$ and a truly $d$-dimensional semimartingale $(Z^1, \ldots, Z^d)$ satisfying Assumption \ref{A2} such that

\begin{equation}\label{sss}
r_t = \phi_t + \sum_{j=1}^dZ^j_t\lambda_j; 0\le t\le T.
\end{equation}
In this section, we work in a high-frequency setup as follows. To shorten notation, the points of partition in time  $(t^n_i)_{i=1}^{\bar{n}}$ and space $(x^N_j)_{j=1}^{\bar{N}}$ will be denoted by $t_i=t^n_i$ and $x_j=x^N_j$, respectively, and we set $\rho(n):=\sup_{1\le i\le \bar{n}-1}|t_{i+1}-t_{i}|$ and $\delta(N):=\sup_{1\le j\le \bar{N}-1}|x_{j+1}-x_{j}|$. We will assume the samplings in time and space will be equally spaced and equidistant. For the sake of preciseness, it should be noted we are dealing with a sequence of refining partitions and we always assume that $\rho(n)\rightarrow 0$, $\delta(N)\rightarrow 0$, $\bar{n}\rightarrow \infty$, $\bar{N}\rightarrow \infty$ as $n,N\rightarrow \infty$, where both $n$ and $N$ goes to infinity.

We assume that the observations are generated by a space-time process

\begin{equation}\label{observedXcont}
X_t(x) := r_t(x) + \varepsilon_t(x); 0\le t\le T, x\in [a,b]
\end{equation}
where $\varepsilon$ represents a space-time error component satisfying some regularity conditions. In this section, we assume that one is able to sample the curves $x\mapsto X_t(x)$ in high-frequency in time. For instance, term-structure objects like interpolated forward rate curves are examples of this type of data. See e.g ~\cite{laurini} and other references therein.

In particular, under Assumptions (A1-A2), the $(\bar{n}\times \bar{N})$-matrix $X_{t_i}(x_j)$ of observations admits an affine noisy representation

\begin{equation}\label{observedX}
X_{t_i}(x_j)=\phi_{t_i}(x_j) + \sum_{k=1}^dZ^k_{t_i}\lambda_k(x_j) + \varepsilon_{t_i}(x_j)
\end{equation}
for $i=1,\ldots, \bar{n}$ and $j=1,\ldots, \bar{N}$. Throughout this section, we assume that $\phi$ is known by the observer and with a slight abuse of notation  we write $X$ for the difference $X-\phi$. In matrix representation, we shall write

$$\mathbb{X}=\mathbb{Z}\Lambda^{\top} + \mathcal{E},\quad \mathbb{X}_{i}=\Lambda \mathbb{Z}_{i} + \mathcal{E}_{i}; 1\le i\le \bar{n}$$
where $\Lambda:=\{\lambda_j(x_i); 1\le i\le \bar{N}, 1\le j\le d\}$, $\mathbb{X} := \{X_{t_i}(x_j); 1\le i\le \bar{n}, 1\le j\le \bar{N}\}$, $\mathbb{Z} := \{Z^j_{t_i}; 1\le i\le \bar{n}, 1\le j\le d\}$ and $\mathcal{E}:=\{\varepsilon_{t_i}(x_j); 1\le i\le \bar{n}, 1\le j\le \bar{N}\}$.

\subsection{Estimating the underlying dimension}

Obviously, the first step is to estimate the underlying dimension of the finite-dimensional realization. But this is an almost straightforward application of Bai and Ng~\cite{bai}. Indeed, we are interested in solving the following optimization problem (for large $n,N$)

$$
\min_{\Lambda^k, \mathcal{Y}(k)}
\rho(n)\delta(N)\sum_{i=1}^{\bar{n}}\sum_{j=1}^{\bar{N}}\Big(X_{t_i}(x_j)-\langle g^k(x_j),Y_{t_i}(k)\rangle_{\mathbb{R}^k} \Big)^2,
$$
where the minimum is taken over the set of real matrices with columns
$$\Lambda^k=(g^1,\ldots, g^k)\in \mathbb{M}_{\bar{N}\times k}\quad;~\mathcal{Y}(k) = (Y(1), \ldots, Y(k))\in \mathbb{M}_{\bar{n}\times k},$$
subject to either $\delta(N)\Lambda_k^\top \Lambda_k =I_k$ or $\rho(n) \mathcal{Y}^\top(k)\mathcal{Y}(k) = I_k$ (Identity matrix in $\mathbb{M}_{k\times k}$). Here $g^i:=(g^i(x_1), \ldots , g^i(x_{\bar{N}}))^\top$ and $Y(i):= (Y_{t_1}(i), \ldots, Y_{t_{\bar{n}}}(i))^\top$ for $1\le i\le k$. The index $k$ encodes the allowance of $k$ factors in the estimation procedure.

\begin{remark}
In order to avoid curse of dimensionality issues, we do assume $k < \min \{\bar{n},\bar{N}\}$ and $n,N\rightarrow \infty$ jointly.
\end{remark}

The factor estimator is defined as follows. Let $\hat{Y}(k)\in \mathbb{M}_{\bar{n}\times k}$ be the random matrix defined by $\hat{Y}_{t_i,j}(k):=\rho(n)^{-1/2}y^j_{t_i};~1\le j\le k, 1\le i\le \bar{n}$ whose the $j$th column
$$y^{j}:= (y^{j}_{t_1},\ldots, y^{j}_{t_{\bar{n}}})\in \mathbb{M}_{\bar{n}\times 1}$$
is an eigenvector associated to the $j$-th largest eigenvalue of $\mathbb{X}\mathbb{X}^{\top}\in \mathbb{M}_{\bar{n}\times \bar{n}}$ subject to $\rho(n)\hat{Y}^{\top}(k)\hat{Y}(k)=I_k$. The loading factor estimator is given by $\hat{\Lambda}^k:=\rho(n)\mathbb{X}^\top \hat{Y}(k)$

In the sequel, we denote
$$
V(k,\hat{Y}(k)):=\min_{\Lambda^k}
\rho(n)\delta(N)\sum_{i=1}^{\bar{n}}\sum_{j=1}^{\bar{N}}\Big(X_{t_i}(x_j)-\langle g^k(x_j),\hat{Y}_{t_i}(k)\rangle_{\mathbb{R}^k} \Big)^2.
$$
The estimation procedure for the underlying dimension of $V$ is due to
Bai and Ng~\cite{bai}. They propose a class of information criteria of the form
\begin{equation}\label{pc}
PC(k):=V(k,\hat{Y}(k))+kq(n,N)
\end{equation}
for suitable penalty functions $q(n,N)$. One can show the estimation of $dim~V$ can be still carry out on the basis of the ideas contained in ~\cite{bai} even in the high-frequency setup, as long as the following assumptions hold true. The following assumptions are inspired by Bai and Ng \cite{bai} and Bai \cite{bai1} but in the context of a continuous time setup sampled at discrete times. For the sake of completeness, we list them here. In the sequel, $\mathcal{H}^q$ is the space of $q$-integrable continuous Brownian semimartingales.

\

\noindent \textbf{(D1)} $Z^j\in\mathcal{H}^4$ for each $j=1,\ldots, d$ and

$$\rho(n)\sum_{i=1}^{\bar{n}}Z_{t_i}Z^{\top}_{t_i}\rightarrow \Sigma_{Z}:= \Big(\langle Z^i, Z^j\rangle_{L^2([0,T];\mathbb{R})}\Big)_{1\le i,j\le d}$$
in probability as $n\rightarrow \infty$ and $\Sigma_Z$ is a $d\times d$ positive definite matrix a.s

\

\noindent \textbf{(D2)} $\sup_{j\ge 1}\|\lambda(x_j)\|_{\mathbb{R}^d} < \infty$ and $$\Bigg\|\delta(N)\sum_{j=1}^{\bar{N}}\lambda^{\top}(x_j)\lambda(x_j)-\int_a^b\lambda^{\top}(x)\lambda(x)dx\Bigg\|_{(2)}\rightarrow 0$$
as $\delta(N)\rightarrow 0$. Moreover, $\Sigma_\lambda:=\int_a^b\lambda^{\top}(x)\lambda(x)dx$ is a $d\times d$-positive definite matrix.

\

\noindent \textbf{(D3)} The error process $\varepsilon$ satisfies assumptions:

\begin{itemize}
\item $\mathbb{E}\varepsilon_{t_i}(x_j)=0,~\mathbb{E}\sup_{i,j}|\varepsilon_{t_i}(x_j)|^8 < \infty$

\item If $\gamma_N(t_i,t_j):=\mathbb{E}\langle \varepsilon_{t_i},\varepsilon_{t_j} \rangle_{\mathbb{R}^{\bar{N}}}\delta(N)$ then $\sup_{i}\gamma_N(t_i,t_i) < \infty$ and the sum $\rho(n)\sum_{i,j=1}^{\bar{n}}|\gamma_N(t_i,t_j)|$ is bounded in $n,N$.

\item $\sup_{N\ge 1}\delta(N)\sum_{\ell,m=1}^{\bar{N}}\sup_{i}|\mathbb{E}\varepsilon_{t_i}(x_m)\varepsilon_{t_i}(x_\ell)| < \infty$.

    \item $\sup_{n,N\ge 1}\delta(N)\rho(n) \sum_{i,j=1}^{\bar{n}} \sum_{\ell,m}^{\bar{N}}|\mathbb{E}\varepsilon_{t_i}(x_\ell)\varepsilon_{t_j}(x_m)| < \infty.$

        \item $\mathbb{E}\Big|\delta^{1/2}(N)\sum_{\ell=1}^{\bar{N}}[\varepsilon_{t_i}(x_\ell)\varepsilon_{t_j}(x_\ell)-
            \mathbb{E}\varepsilon_{t_i}(x_\ell)\varepsilon_{t_j}(x_\ell)]\Big|^4.$

       \item The error $\varepsilon$ and the factors $Z$ are mutually independent.
\end{itemize}

\

\noindent \textbf{(D4)}

$$\sup_{n,N}\sup_{t_s}\mathbb{E}\Big\| \sqrt{\rho(n)\delta(N)}\sum_{i=1}^{\bar{n}} \sum_{j=1}^{\bar{N}} Z_{t_i} \big[\varepsilon_{t_i}(x_j)\varepsilon_{t_s}(x_j) - \mathbb{E}[\varepsilon_{t_i}(x_j)\varepsilon_{t_s}(x_j)\big]  \Big\|^2_{\mathbb{R}^d} < \infty$$

$$\sup_{n,N}\mathbb{E}\Big\| \sqrt{\rho(n)\delta(N)}\sum_{i=1}^{\bar{n}}\sum_{j=1}^{\bar{N}} Z^\top_{t_i}\lambda (x_j)\varepsilon_{t_i}(x_j)    \Big\|^2_{(2)}<\infty$$

\begin{remark}\label{discussionFR}
The assumption $\text{Rank}~\Sigma_Z=d$ a.s is not strong. Indeed, since $\Sigma_Z$ is a Gramian matrix, then if $Z$ does not satisfy \textbf{(D1)} then we shall reduce the effective dimension without losing information. More importantly, we stress that \textbf{(D1)} implies that factors satisfy Assumption 2.1 but it does not imply that $[Z]_T$ has full rank a.s. The fact that $\Sigma_\lambda$ is a positive definite matrix is equivalent to the fact that $\{\lambda_1,\ldots\lambda_d\}$ is linearly independent on the state space $E$ equipped with the $L^2([a,b];\mathbb{R})$-inner product\footnote{Since we work with the subspace of functions $f\in E$ such that $f(a)=0$, then $\langle\cdot, \cdot \rangle_{L^2}$ is indeed an inner product over $E$.}. In contrast to the usual factor analysis (see e.g~\cite{bai,bai1}), we stress that $\Sigma_Z$ is random.
\end{remark}

In this case, under some mild growth condition on $q(n,N)$,

\begin{equation}\label{dhat}
\hat{d}:=~argmin_{1\le k\le kmax}~PC(k)
\end{equation}
will be a consistent estimator for $dim~V$, where $kmax$ is an arbitrary integer such that $d \le kmax$. The proof of this statement will be inspired by the arguments given by Bai and Ng~\cite{bai} and Bai~\cite{bai1}. In one hand, in contrast to \cite{bai} and \cite{bai1}, our asymptotic matrix $\Sigma_Z$ is random and the sampling should be in high-frequency. On the other hand, Assumption \textbf{D1} allows us to prove similar results without significant extra effort. For the sake of completeness, we give the details here. In the sequel, we denote $C_{nN} := min\{\delta(N)^{-1/2},\rho(n)^{-1/2}\}$ and

$$\gamma_N(t_{\ell},t_i):=\delta(N)\mathbb{E}\langle \varepsilon_{t_{\ell}}, \varepsilon_{t_i}\rangle_{\mathbb{R}^N}\quad  \theta_N(t_{\ell},t_i):=\delta(N)\langle \varepsilon_{t_{\ell}}, \varepsilon_{t_i}\rangle_{\mathbb{R}^N}-\gamma_N(t_{\ell},t_i)
$$

$$
\xi_N(t_\ell,t_i):=Z^{\top}_{t_i}\Lambda^{\top}\varepsilon_{t_\ell}\delta(N);\quad \eta_N(t_\ell,t_i):=Z^{\top}_{t_\ell}\Lambda^{\top}\varepsilon_{t_i}\delta(N)
$$
for $1\le i,\ell\le \bar{n}$.

\begin{lemma}\label{techlemma}
If Assumptions \textbf{(D1-D2-D3-D4)} hold, then

\noindent (a) $\rho(n)\sum_{\ell=1}^{\bar{n}}\hat{Y}_{t_\ell}(d)\gamma_N(t_\ell,t_i) = O_{\mathbb{P}}(\frac{1}{\rho(n)^{-1/2}C_{nN}})$

\

\noindent (b) $\rho(n)\sum_{\ell=1}^{\bar{n}}\hat{Y}_{t_\ell}(d)\theta_N(t_\ell,t_i)=O_{\mathbb{P}}(\frac{1}{\delta(N)^{-1/2}C_{nN}})$

\

\noindent (c) $\rho(n)\sum_{\ell=1}^{\bar{n}}\hat{Y}_{t_\ell}(d)\xi_N(t_\ell,t_i) = O_{\mathbb{P}}(\delta(N)^{1/2})$

\

\noindent (d) $\rho(n)\sum_{\ell=1}^{\bar{n}}\hat{Y}_{t_\ell}(d)\eta_N(t_\ell,t_i)= O_{\mathbb{P}}(\frac{1}{\delta(N)^{-1/2}C_{nN}})$.

\end{lemma}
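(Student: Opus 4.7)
The plan is to follow the template of the auxiliary rate lemmas in Bai~(2003) and Bai and Ng~(2002), adapted to the present high-frequency, continuous-time setting where the averaging factors $1/T$ and $1/N$ are replaced by the mesh sizes $\rho(n)$ and $\delta(N)$. The crucial structural input is the eigenvalue identity
\[
\rho(n)\delta(N)\mathbb{X}\mathbb{X}^\top \hat{Y}(d) = \hat{Y}(d)\,\mathcal{D}_{nN},
\]
where $\mathcal{D}_{nN}$ is the diagonal matrix of the $d$ largest eigenvalues of $\rho(n)\delta(N)\mathbb{X}\mathbb{X}^\top$, combined with the normalization $\rho(n)\hat{Y}^\top(d)\hat{Y}(d)=I_d$. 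The latter immediately gives $\rho(n)\sum_{\ell=1}^{\bar n}\|\hat{Y}_{t_\ell}(d)\|_{\mathbb{R}^d}^2 = d$, which provides the scale control underlying all four bounds.

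For part~(a), I would first apply Cauchy--Schwarz to get
\[
\Big\|\rho(n)\sum_{\ell=1}^{\bar n}\hat{Y}_{t_\ell}(d)\gamma_N(t_\ell,t_i)\Big\|_{\mathbb{R}^d}\;\le\;\sqrt{d}\,\Big(\rho(n)\sum_{\ell=1}^{\bar n}\gamma_N^2(t_\ell,t_i)\Big)^{1/2},
\]
and bound the second factor via the summability condition on $\gamma_N$ in~(D3) combined with the elementary estimate $|\gamma_N(t_\ell,t_i)|\le\sqrt{\gamma_N(t_\ell,t_\ell)\gamma_N(t_i,t_i)}$. This first pass is coarser than the target rate, and to recover the sharp $O_{\mathbb{P}}(\rho(n)^{1/2}/C_{nN})$ I would decompose $\hat{Y}_{t_\ell}(d)=HZ_{t_\ell}+(\hat{Y}_{t_\ell}(d)-HZ_{t_\ell})$, with $H$ the rotation matrix produced by the eigenvalue equation above. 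The $HZ_{t_\ell}$ part is controlled by a direct moment bound on $\rho(n)\sum_\ell Z_{t_\ell}\gamma_N(t_\ell,t_i)$ via~(D1) and the covariance structure of $\varepsilon$, while the remainder is handled by plugging a preliminary crude bound on $\hat{Y}-HZ$ back into the expansion.

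Parts~(b), (c), and (d) follow the same two-step pattern but with different moment inputs. For~(b), the centering of $\theta_N$ and the fourth-moment assumption in~(D3) yield the additional $\delta(N)^{1/2}$ decay; for~(c) and~(d), the cross-moment bounds in~(D4), together with the independence of $Z$ and $\varepsilon$ (last bullet of~(D3)), extract the $\delta(N)^{1/2}$ factor, and in~(d) an additional recursion against $\hat{Y}-HZ$ yields the extra $1/C_{nN}$. In each case, the independence of $Z$ and $\varepsilon$ decouples the factor and error contributions, which is what allows the cross-terms to be estimated by products of two independent square-root--type moment bounds.

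The main obstacle is the recursive step that sharpens the naive Cauchy--Schwarz estimates by the factor $1/C_{nN}$. It rests on a tight preliminary rate for $\hat{Y}-HZ$, whose derivation in turn feeds on the four bounds of the lemma, so that~(a)--(d) and the preliminary rate must be proved simultaneously by a bootstrap argument. A further complication, absent in the classical Bai--Ng setting, is that the limit matrix $\Sigma_Z$ in~(D1) is random: the rotation $H$ is random as well, so the usual deterministic LLN/moment arguments must be replaced by conditional versions (conditioning on the factor path) before invoking the uniform moment bounds in~(D1)--(D4). Carrying this through carefully while preserving the $C_{nN}$-rate at every stage of the bootstrap is the genuinely new ingredient needed beyond a direct translation of the discrete-time Bai--Ng proofs.
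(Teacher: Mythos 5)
Your overall strategy is the right one and essentially what the paper does: exploit the eigenvalue identity $\rho(n)\delta(N)\mathbb{X}\mathbb{X}^\top\hat{Y}(d)=\hat Y(d)L_{n,N}$ together with the normalization $\rho(n)\hat Y^\top(d)\hat Y(d)=I_d$, decompose $\hat Y_{t_\ell}(d)=H_d^\top Z_{t_\ell}+(\hat Y_{t_\ell}(d)-H_d^\top Z_{t_\ell})$, control the $Z$-part by the moment assumptions \textbf{(D1--D4)}, and control the remainder by a preliminary rate for $\hat Y - H_d^\top Z$. The paper itself gives only a two-line proof that delegates everything to this decomposition: it first records $\|L_{n,N}\|_{(2)}=O_{\mathbb P}(1)$, then invokes the mean-square rate $C_{nN}^2\,\rho(n)\sum_i\|\hat Y_{t_i}(d)-H_d^\top Z_{t_i}\|^2=O_{\mathbb P}(1)$ as the analogue of Lemma~A1 of Bai~(2003), and finally says that repeating Bai's Lemma~A2 gives (a)--(d). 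So the template you propose is the intended one.

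However, there is a genuine misconception in your logical structure: you claim that the derivation of the preliminary rate for $\hat Y - H_d^\top Z$ ``in turn feeds on the four bounds of the lemma, so that (a)--(d) and the preliminary rate must be proved simultaneously by a bootstrap argument.'' This is not the case, and if you tried to set up such a simultaneous fixed-point argument you would be solving a harder problem than necessary. In Bai~(2003) the mean-square rate (Theorem~1 there, equation \eqref{averageY} here) is proved \emph{directly and independently}: one expands $\hat Y_{t_i}(d)-H_d^\top Z_{t_i}=L_{n,N}^{-1}\rho(n)\sum_\ell\hat Y_{t_\ell}(d)\bigl(\gamma_N+\theta_N+\xi_N+\eta_N\bigr)(t_\ell,t_i)$, squares, and averages over $i$. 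After averaging over $i$, each of the four resulting double sums is controlled by Cauchy--Schwarz together with the crude identity $\rho(n)\sum_\ell\|\hat Y_{t_\ell}(d)\|^2=d$ and the summability conditions in \textbf{(D3--D4)} --- one never needs the pointwise-in-$t_i$ rates of parts (a)--(d). It is only \emph{after} the averaged rate is in hand that one goes back and refines each term pointwise in $t_i$, using the decomposition $\hat Y=H_d^\top Z+(\hat Y-H_d^\top Z)$ and the now-available mean-square bound on the remainder. The argument is therefore linear, not circular. Your observation that the randomness of $\Sigma_Z$ (and hence of $H_d$) requires some care is fair, but the paper handles it without change because the relevant estimates are uniform moment bounds in \textbf{(D1--D4)} that make no use of determinism; it is not a bootstrap that rescues this.
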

\begin{proof}
Let $L_{n,N}$ be the diagonal matrix of the eigenvalues of $\rho(n)\delta(N)\mathbb{X}\mathbb{X}^{\top}$ arranged in decreasing order. From \textbf{(D1-D2-D3-D4)}, one can easily check that $\|\rho(n)\delta(N) \mathbb{X}\mathbb{X}^\top \|_{(2)} = O_{\mathbb{P}}(1)$ and hence $\|L_{n,N}\|_{(2)} = O_{\mathbb{P}}(1)$. In this case, the same argument given in the proof of Lemma A1 in \cite{bai1} allows us to state that

\begin{equation}\label{averageY}
C^2_{nN} \Big(\rho(n)\sum_{i=1}^{\bar{n}}\|\hat{Y}_{t_i}(d) - H_d^\top Z_{t_i}\|^2_{\mathbb{R}^{d}} \Big) = O_{\mathbb{P}}(1)
\end{equation}
where $H_d:=\delta(N)\Lambda^\top \Lambda \mathbb{Z}^\top \hat{Y}(d)\rho(n) L^{-1}_{n,N}\in \mathbb{M}_{d\times d}$. Assumptions \textbf{(D1-D2-D3-D4)} together with (\ref{averageY}) allow us to repeat the same argument given in the proof of Lemma A2 in \cite{bai1} to conclude that the statement hold true. We omit the details.
\end{proof}

The next result was enunciated by Bai and Ng~\cite{bai} in Lemma A3 (in the context of a discrete-time model and deterministic $\Sigma_Z$) without a complete proof. For sake of completeness, we give the details here in our context.

\begin{lemma}\label{LnN}
Let $L_{n,N}$ be the diagonal matrix of the eigenvalues of $\rho(n)\delta(N)\mathbb{X}\mathbb{X}^{\top}$ arranged in decreasing order. If Assumptions \textbf{(D1-D2-D3-D4)} hold then
$$L_{n,N}\stackrel{p}\to \mathcal{C}:=diag~(c_1,\ldots, c_d)$$
as $n,N\rightarrow \infty$, where $(c_1, \ldots, c_d)$ are the eigenvalues (in decreasing order) of $\Sigma_{\lambda}\Sigma_Z$.
\end{lemma}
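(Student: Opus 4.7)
The plan is to derive an explicit matrix identity for $L_{n,N}$ and then substitute the first-order approximation $\hat Y(d)\approx \mathbb Z H_d$ furnished by the proof of Lemma \ref{techlemma}, following the scheme of Lemma A.3 of \cite{bai} adapted to our continuous-time high-frequency setting. I start from the eigen-equation $\rho(n)\delta(N)\mathbb X\mathbb X^\top\hat Y(d)=\hat Y(d)L_{n,N}$ together with the normalization $\rho(n)\hat Y(d)^\top\hat Y(d)=I_d$, which give
\begin{equation*}
L_{n,N} \;=\; \rho(n)^2\delta(N)\,\hat Y(d)^\top\mathbb X\mathbb X^\top\hat Y(d).
\end{equation*}
Letting $H_d = (\delta(N)\Lambda^\top\Lambda)(\rho(n)\mathbb Z^\top\hat Y(d))L_{n,N}^{-1}$, the same argument used in Lemma \ref{techlemma} yields the decomposition $\hat Y(d) = \mathbb Z H_d + e_{n,N}$ with $\rho(n)\|e_{n,N}\|_{(2)}^2 = O_{\mathbb{P}}(C_{nN}^{-2})$. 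Plugging this and the signal/noise decomposition $\mathbb X = \mathbb Z\Lambda^\top + \mathcal E$ into the identity for $L_{n,N}$ splits it as $L_{n,N}=T_1+T_2+T_3+T_4$, with signal piece $T_1 = (\rho(n)\hat Y(d)^\top\mathbb Z)(\delta(N)\Lambda^\top\Lambda)(\rho(n)\mathbb Z^\top\hat Y(d))$ and three cross/noise terms $T_2,T_3,T_4$ containing $\mathcal E$.

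I then expect to show, using Assumptions (D1)--(D2) and the bound on $e_{n,N}$, that $T_1 = H_d^\top\Sigma_Z\Sigma_\lambda\Sigma_Z H_d + o_{\mathbb{P}}(1)$, while the normalization $\rho(n)\hat Y(d)^\top\hat Y(d)=I_d$ rewrites as $H_d^\top\Sigma_Z H_d = I_d + o_{\mathbb{P}}(1)$. For the three noise terms, after the same substitution $\hat Y(d)=\mathbb Z H_d + e_{n,N}$ each collapses to a $d\times d$ object whose dominant block is either $\rho(n)\delta(N)\mathbb Z^\top\mathcal E\Lambda$---of size $O_{\mathbb{P}}(\sqrt{\rho(n)\delta(N)})$ by the second line of (D4)---or $\rho(n)\delta(N)\mathbb Z^\top\mathcal E\mathcal E^\top\mathbb Z$, which I handle by splitting $\mathcal E\mathcal E^\top$ into its mean and centered parts via $(\mathcal E\mathcal E^\top)_{\ell,i}=\delta(N)^{-1}[\gamma_N(t_\ell,t_i)+\theta_N(t_\ell,t_i)]$ and invoking (D3), the first line of (D4) and Lemma \ref{techlemma}(a)--(b). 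Both contributions are $o_{\mathbb{P}}(1)$, giving
\begin{equation*}
L_{n,N} \;=\; H_d^\top \Sigma_Z \Sigma_\lambda \Sigma_Z H_d + o_{\mathbb{P}}(1).
\end{equation*}

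Combining this with $H_d^\top\Sigma_Z H_d = I_d + o_{\mathbb{P}}(1)$, the columns of $H_d$ are asymptotically a $\Sigma_Z$-orthonormal eigenbasis of $\Sigma_\lambda\Sigma_Z$, and the diagonal entries of $L_{n,N}$ are asymptotically the associated eigenvalues. Since $L_{n,N}$ is diagonal with decreasing entries for every $n,N$ and the eigenvalues of $\Sigma_\lambda\Sigma_Z$ in decreasing order are $c_1\ge\cdots\ge c_d$, this forces $L_{n,N}\stackrel{p}{\to}\mathcal C$. The hard part will be the control of the noise terms: the scaled noise matrix $\rho(n)\delta(N)\mathcal E\mathcal E^\top$ has operator norm only $O_{\mathbb{P}}(1)$, so a direct Weyl-type perturbation argument applied to the full $\bar n\times \bar n$ matrix $\rho(n)\delta(N)\mathbb X\mathbb X^\top$ is insufficient; one must exploit the effective rank-$d$ projection $\hat Y(d)^\top(\cdot)\hat Y(d)$ and carry out all estimates at the $d\times d$ level, where the extra $\sqrt{\rho(n)\delta(N)}$ factor furnished by (D3)--(D4) and Lemma \ref{techlemma} delivers the result.
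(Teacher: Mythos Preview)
Your approach is correct but takes a different route from the paper. The paper, following Bai~\cite{bai1} Proposition~1, left-multiplies the eigen-equation $\rho(n)\delta(N)\mathbb X\mathbb X^\top\hat Y(d)=\hat Y(d)L_{n,N}$ by $(\delta(N)\Lambda^\top\Lambda)^{1/2}\rho(n)\mathbb Z^\top$ to obtain a genuine $d\times d$ eigenvalue problem $[U_{n,N}+s_{n,N}E_{n,N}E_{n,N}^{+}]E_{n,N}=E_{n,N}L_{n,N}$, where $E_{n,N}=(\delta(N)\Lambda^\top\Lambda)^{1/2}(\rho(n)\mathbb Z^\top\hat Y(d))$ and $U_{n,N}=(\delta(N)\Lambda^\top\Lambda)^{1/2}(\rho(n)\mathbb Z^\top\mathbb Z)(\delta(N)\Lambda^\top\Lambda)^{1/2}\to\Sigma_\lambda^{1/2}\Sigma_Z\Sigma_\lambda^{1/2}$; since $E_{n,N}E_{n,N}^{+}$ is a projection it is automatically $O_{\mathbb P}(1)$, and $s_{n,N}=o_{\mathbb P}(1)$ via Lemma~\ref{techlemma}, so continuity of eigenvalues gives $L_{n,N}\to\mathcal C$. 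Your route instead writes $L_{n,N}$ as the quadratic form $\rho(n)\hat Y(d)^\top[\rho(n)\delta(N)\mathbb X\mathbb X^\top]\hat Y(d)$ and substitutes $\hat Y(d)\approx\mathbb ZH_d$ everywhere, reducing to the pair of approximate identities $L_{n,N}\approx H_d^\top\Sigma_Z\Sigma_\lambda\Sigma_Z H_d$ and $H_d^\top\Sigma_Z H_d\approx I_d$, which together say that $\Sigma_Z^{1/2}H_d$ is nearly orthogonal and diagonalizes $\Sigma_Z^{1/2}\Sigma_\lambda\Sigma_Z^{1/2}$. The advantage of the paper's version is that it never needs $H_d=O_{\mathbb P}(1)$ explicitly: it works with $E_{n,N}$, which does not contain $L_{n,N}^{-1}$, and the pseudoinverse trick absorbs the remainder. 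Your argument, by contrast, repeatedly uses $H_d=O_{\mathbb P}(1)$ to turn $o_{\mathbb P}(1)\cdot H_d$ into $o_{\mathbb P}(1)$; since $H_d$ carries a factor $L_{n,N}^{-1}$, this quietly assumes the smallest diagonal entry of $L_{n,N}$ is bounded away from zero---which is essentially the conclusion. The fix is standard (bound $\rho(n)\mathbb Z^\top\hat Y(d)$ directly by Cauchy--Schwarz and phrase everything in terms of this quantity rather than $H_d$), but the paper's eigen-equation/pseudoinverse formulation sidesteps the issue more cleanly.
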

\begin{proof}
We follow closely the idea contained in the proof of Proposition 1 in \cite{bai1}. By the very definition, $\rho(n)\delta(N) \mathbb{X}\mathbb{X}^\top\hat{Y}(d) = \hat{Y}(d)L_{n,N}~a.s$ and hence

$$\Big(\delta(N)\Lambda^\top \Lambda\Big)^{1/2}\rho(n) \mathbb{Z}^\top\Big(\rho(n)\delta(N) \mathbb{X}\mathbb{X}^\top\Big)\hat{Y}(d) = \Big(\delta(N)\Lambda \Lambda^\top\Big)^{1/2}\big(\rho(n)\mathbb{Z}^\top\hat{Y}(d)\big)L_{n,N}$$
From the identity $\mathbb{X} = \mathbb{Z}\Lambda^\top + \mathcal{E}$, we actually have

\begin{eqnarray}
\nonumber\Big(\delta(N)\Lambda^\top \Lambda\Big)^{1/2}\big(\rho(n)\mathbb{Z}^\top\mathbb{Z}\big)\big(\delta(N)\Lambda^\top \Lambda\big) \nonumber\big(\mathbb{Z}^\top\hat{Y}(d)\rho(n)\big) + s_{nN} &=& \Big(\delta(N)\Lambda \Lambda^\top\Big)^{1/2}\\
\label{inter1}& &\\
\nonumber&\cdot& \big(\rho(n)\mathbb{Z}^\top\hat{Y}(d)\big)L_{n,N}
\end{eqnarray}
where

\begin{small}
\begin{eqnarray}
\nonumber s_{n,N}&:=&\big(\delta(N)\Lambda^\top \Lambda\big)^{1/2}\big[\rho(n)\big(\mathbb{Z}^\top\mathbb{Z}\big)\rho(n)\delta(N)\Lambda^\top\mathcal{E}^\top \hat{Y}(d) + \nonumber\rho(n)\delta(N)\mathbb{Z}^\top\mathcal{E}\Lambda\mathbb{Z}^\top\hat{Y}(d)\rho(n)\\
\label{inter2} & &\\
\nonumber& + & \rho(n)\delta(N)\mathbb{Z}^\top\mathcal{E}\mathcal{E}^\top\hat{Y}(d)\rho(n)]=o_{\mathbb{P}}(1)
\end{eqnarray}
\end{small}
due to Lemma \ref{techlemma}. Let $U_{n,N}:=\Big(\delta(N)\Lambda^\top \Lambda\Big)^{1/2}\big(\rho(n)\mathbb{Z}^\top\mathbb{Z}\big)\big(\delta(N)\Lambda^\top \Lambda\big)^{1/2}$ and
$$E_{n,N}:=\Big(\delta(N)\Lambda \Lambda^\top\Big)^{1/2}\big(\rho(n)\mathbb{Z}^\top\hat{Y}(d)\big).$$

We shall write (\ref{inter1}) as follows

$$[U_{n,N} + s_{n,N}E_{n,N}E^+_{n,N}]E_{n,N} = E_{n,N}L_{n,N}$$
where $E^+_{n,N}$ is the pseudoinverse of $E_{n,N}$. Then each column of $E_{n,N}$ is an eigenvector of $N_{n,N} + s_{n,N}E_{n,N}E^+_{n,N}$. Since $E_{n,N}E^+_{n,N} = O_{\mathbb{P}}(1)$ then (\ref{inter2}) and Assumptions \textbf{(D1, D2)} yield

$$\|U_{n,N} +s_{n,N}E_{n,N}E^+_{n,N}- \Sigma_{\lambda}^{1/2}\Sigma_{Z}\Sigma^{1/2}_{\lambda}\|_{(2)} \stackrel{p}\to 0$$
as $n,N\rightarrow \infty$. By the continuity of the eigenvalues, we do have $\|L_{n,N} - \mathcal{C}\|_{(2)}\stackrel{p}\to 0$
as $n,N\rightarrow \infty$. Since $\Sigma_{\lambda}^{1/2}\Sigma_{Z}\Sigma^{1/2}_{\lambda}$ and $\Sigma_{\lambda}\Sigma_Z$ have the same random eigenvalues, we conclude the proof.
\end{proof}

\begin{lemma}\label{Gmatrix}
Assume that hypotheses \textbf{(D1-D2-D3-D4)} hold and the eigenvalues of $\Sigma_{\lambda}\Sigma_{Z}\in \mathbb{M}_{d\times d}$ are distinct almost surely. Then, for every $j=1,\ldots, d$, there exists a random vector $(G_{1j},\ldots, G_{dj})$ such that

$$\Bigg(\sum_{\ell=1}^{\bar{n}}\sqrt{\rho(n)}y^1_{t_\ell}Z^j_{t_\ell},\ldots ,\sum_{\ell=1}^{\bar{n}}\sqrt{\rho(n)}y^{\hat{d}}_{t_\ell}Z^j_{t_\ell}\Bigg) \stackrel{p}{\to} \big(G_{1j},\ldots, G_{dj}\big)$$
as $n,N\rightarrow \infty$. Moreover, the matrix $G := (G_{ij})_{1\le i,j\le d}$ is invertible a.s and it is given by $G=\mathcal{C}^{1/2}\Phi^\top \Sigma^{-1/2}_{\lambda}$ and $\Phi$ is the eigenvector matrix related to $\mathcal{C}$ subject to $\Phi^\top\Phi=I_d~a.s$.
\end{lemma}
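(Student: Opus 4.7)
The plan is to recognize the tuple appearing in the statement as the $k$-th column of the random $d\times d$ matrix
$$A_{n,N}:=\rho(n)\,\mathbb{Z}^{\top}\hat{Y}(d),$$
since a direct computation gives $(A_{n,N})_{jk}=\sum_{\ell=1}^{\bar n}\sqrt{\rho(n)}\,y^{k}_{t_{\ell}}Z^{j}_{t_{\ell}}$. The lemma is therefore equivalent to $A_{n,N}\stackrel{p}{\to}\Sigma_{\lambda}^{-1/2}\Phi\,\mathcal{C}^{1/2}$, whose transpose is the claimed matrix $G$.

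I would begin from the spectral identity $\rho(n)\delta(N)\,\mathbb{X}\mathbb{X}^{\top}\hat{Y}(d)=\hat{Y}(d)L_{n,N}$ already used in Lemma \ref{LnN}. Expanding $\mathbb{X}=\mathbb{Z}\Lambda^{\top}+\mathcal{E}$, left-multiplying by $\rho(n)\hat{Y}(d)^{\top}$, and using the normalization $\rho(n)\hat{Y}(d)^{\top}\hat{Y}(d)=I_{d}$, the four cross terms that appear are each $o_{\mathbb{P}}(1)$ by the four bounds of Lemma \ref{techlemma}. What remains is
$$A_{n,N}^{\top}\bigl(\delta(N)\Lambda^{\top}\Lambda\bigr)A_{n,N}=L_{n,N}+o_{\mathbb{P}}(1).$$
By Assumption \textbf{(D2)} and Lemma \ref{LnN} the right-hand side converges to $\mathcal{C}$ and $\delta(N)\Lambda^{\top}\Lambda\to\Sigma_{\lambda}$, so every subsequential limit $A$ of $A_{n,N}$ must satisfy $A^{\top}\Sigma_{\lambda}A=\mathcal{C}$.

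Next I would combine this with the eigenvector half of the argument already carried out inside the proof of Lemma \ref{LnN}. Setting $E_{n,N}:=(\delta(N)\Lambda^{\top}\Lambda)^{1/2}A_{n,N}$, that computation gives $[U_{n,N}+o_{\mathbb{P}}(1)]E_{n,N}=E_{n,N}L_{n,N}$ with $U_{n,N}\stackrel{p}{\to}\Sigma_{\lambda}^{1/2}\Sigma_{Z}\Sigma_{\lambda}^{1/2}$. Passing to a subsequential limit, $E=\Sigma_{\lambda}^{1/2}A$ satisfies simultaneously
$$\Sigma_{\lambda}^{1/2}\Sigma_{Z}\Sigma_{\lambda}^{1/2}E=E\mathcal{C}\qquad\text{and}\qquad E^{\top}E=\mathcal{C}.$$
Thus the $k$-th column of $E$ is an eigenvector of $\Sigma_{\lambda}^{1/2}\Sigma_{Z}\Sigma_{\lambda}^{1/2}$ with eigenvalue $c_{k}$ and squared norm $c_{k}$. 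Under the distinct-eigenvalue hypothesis it is forced to equal $\sqrt{c_{k}}\phi_{k}$ with $\phi_{k}$ a unit eigenvector, determined up to sign, so $E=\Phi\mathcal{C}^{1/2}$, $A=\Sigma_{\lambda}^{-1/2}\Phi\mathcal{C}^{1/2}$, and $G=A^{\top}=\mathcal{C}^{1/2}\Phi^{\top}\Sigma_{\lambda}^{-1/2}$. Invertibility of $G$ is then immediate: $\Phi$ is orthogonal, $\Sigma_{\lambda}$ is almost surely positive definite by \textbf{(D2)}, and the diagonal of $\mathcal{C}$ is almost surely strictly positive since the eigenvalues of $\Sigma_{\lambda}\Sigma_{Z}$ coincide with those of the positive definite matrix $\Sigma_{\lambda}^{1/2}\Sigma_{Z}\Sigma_{\lambda}^{1/2}$.

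The main obstacle is the column-sign ambiguity inherent to any eigenvector decomposition: the eigenvectors $y^{k}$ of $\mathbb{X}\mathbb{X}^{\top}$ are each defined only up to sign, and so are the $\phi_{k}$. The distinct-eigenvalue hypothesis is precisely what collapses the generic orthogonal-group ambiguity within eigenspaces to this finite one. I would handle it through a standard subsequential argument: given any subsequence, the two constraints above already determine $A$ up to a random diagonal sign matrix; since all subsequential limits agree once the sign convention of $\hat{Y}(d)$ is matched to that chosen for $\Phi$, convergence in probability of the whole sequence to $G^{\top}$ follows.
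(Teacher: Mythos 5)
Your proposal is correct and mirrors the route the paper takes, which after invoking Lemma \ref{LnN} simply cites Proposition 1 in Bai (2003) and the discussion on its page 162; your derivation of $A_{n,N}^{\top}(\delta(N)\Lambda^{\top}\Lambda)A_{n,N}\to\mathcal{C}$ plus the eigenvector relation $\Sigma_\lambda^{1/2}\Sigma_Z\Sigma_\lambda^{1/2}E=E\mathcal{C}$, and the ensuing identification $E=\Phi\,\mathcal{C}^{1/2}$ up to column signs, is exactly the substance of Bai's argument transcribed to this setting. The column-sign ambiguity is indeed the one delicate point; both the source and your sketch fix it only by an implicit sign convention on the eigenvectors of $\mathbb{X}\mathbb{X}^{\top}$ and of $\mathcal{C}$, which is standard in the factor-model literature and does not constitute a gap relative to what the paper itself establishes.
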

\begin{proof}
By using Lemma \ref{LnN}, the proof is identical to Proposition 1 in Bai \cite{bai1} even in the case when $\Sigma_Z$ is random. We refer the reader to the discussion in page 162 in \cite{bai1}.
\end{proof}

We are now able to present the following result.

\begin{lemma}\label{pestimation}
Let us assume that assumptions~\textbf{(D1,D2, D3, D4)} hold and let $\hat{d} = \text{arg~min}_{1\le k \le kmax}PC(k)$. Assume the eigenvalues of $\Sigma_{\lambda}\Sigma_{Z}\in \mathbb{M}_{d\times d}$ are distinct almost surely. Then, $lim_{n,N\rightarrow \infty} \mathbb{P}[\hat{d} = d] = 1$ if (i) $q(N, n) \rightarrow 0$ and (ii)
$ C_{nN}q(N, T) \rightarrow  \infty$ as $n,N\rightarrow \infty$ where $C_{nN} = min\{\delta{(N)}^{-1/2}, \rho(n)^{-1/2}\}$.
\end{lemma}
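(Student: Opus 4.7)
The plan is to follow the Bai--Ng (2002) argument for information criteria, adapted to the present high-frequency continuous-time setup where the limiting covariance $\Sigma_Z$ is random. It suffices to show that, for every $k \in \{1,\ldots,kmax\} \setminus \{d\}$,
$$\mathbb{P}\bigl[PC(k) > PC(d)\bigr] \longrightarrow 1$$
as $n,N \to \infty$. I would split into the over-estimation case $k>d$ and the under-estimation case $k<d$, treating them by rather different arguments.

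For $k \geq d$, the strategy is to prove the ``fundamental approximation identity''
$$V(d,\widehat Y(d)) - V(k,\widehat Y(k)) = O_{\mathbb{P}}(C_{nN}^{-2}).$$
This follows from the estimate in equation (\ref{averageY}) of the proof of Lemma \ref{LnN}, which shows that the estimated factor space $\widehat Y(d)$ approximates the rotated true factor space $\mathbb{Z}H_d$ at rate $C_{nN}^{-1}$ in the Frobenius norm. Hence the residual sum of squares for $k=d$ factors is already asymptotically equivalent to the one fitted with the true factors, and enlarging the model from $d$ to $k>d$ can only reduce the residual by a further $O_{\mathbb{P}}(C_{nN}^{-2})$ term (the extra eigenvalues of $\rho(n)\delta(N)\mathbb{X}\mathbb{X}^\top$ beyond the $d$-th one are $o_{\mathbb{P}}(1)$ by Lemma \ref{LnN}, and in fact $O_{\mathbb{P}}(C_{nN}^{-2})$ once the noise contributions of Assumption \textbf{(D3)} are accounted for). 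Combining this with hypothesis (ii),
$$PC(k) - PC(d) \;=\; (k-d)\,q(n,N) \;-\; O_{\mathbb{P}}(C_{nN}^{-2}) \;\xrightarrow{p}\; +\infty,$$
since $C_{nN}^{2}\,q(n,N) \to \infty$.

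For $k<d$, the substantive step is the reverse inequality: I would show that $V(k,\widehat Y(k)) - V(d,\widehat Y(d))$ has a strictly positive liminf in probability. By the variational characterization of $V(k,\widehat Y(k))$ together with the decomposition $\mathbb{X}=\mathbb{Z}\Lambda^\top+\mathcal{E}$, one has
$$V(k,\widehat Y(k)) \;\geq\; \min_{G \in \mathbb{M}_{\bar N \times k}} \rho(n)\delta(N)\,\bigl\|\mathbb{Z}\Lambda^\top - \mathbb{Y}G^\top\bigr\|_F^2 \;-\; o_{\mathbb{P}}(1),$$
where the cross terms involving $\mathcal{E}$ and the pure noise term $\rho(n)\delta(N)\|\mathcal{E}\|_F^2$ are handled uniformly in the $k$-dimensional loading matrix by Assumptions \textbf{(D3)--(D4)} and Lemma \ref{techlemma}. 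The right-hand side is then the classical rank-$k$ approximation of $\mathbb{Z}\Lambda^\top$, and by Assumptions \textbf{(D1)--(D2)} jointly with Lemma \ref{LnN} it converges in probability to $\sum_{j=k+1}^{d} c_j$, where $c_1\geq\cdots\geq c_d$ are the eigenvalues of $\Sigma_\lambda\Sigma_Z$. Since both $\Sigma_\lambda$ and $\Sigma_Z$ are a.s.\ positive definite, we have $c_{k+1},\ldots,c_d>0$ almost surely, so
$$V(k,\widehat Y(k)) - V(d,\widehat Y(d)) \;\xrightarrow{p}\; \sum_{j=k+1}^{d} c_j \;>\; 0\quad a.s.$$
Since $q(n,N)\to 0$ by (i), the penalty difference $(d-k)\,q(n,N)\to 0$ and hence $PC(k)-PC(d)\to \sum_{j=k+1}^{d}c_j > 0$ in probability.

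The main obstacle is the case $k<d$. Unlike the classical Bai--Ng argument, the limiting eigenvalues $c_j$ are genuinely random, so the lower bound on the loss is a random positive quantity. I would deal with this by a standard conditioning argument: for any $\varepsilon>0$ there exists $\eta>0$ with $\mathbb{P}\bigl[\sum_{j=k+1}^{d}c_j > \eta\bigr] \geq 1-\varepsilon$, and on this event the usual deterministic-style argument applies. The second delicate point is that the noise-induced cross terms in the expansion of $V(k,\widehat Y(k))$ must be controlled \emph{uniformly} over the $\bar N\times k$ loading matrices. This uniformity is exactly what the moment bounds in Assumptions \textbf{(D3)--(D4)}, exploited via Lemma \ref{techlemma}, are designed to supply; once in place, the two cases above combine to give $\mathbb{P}[\widehat d = d]\to 1$.
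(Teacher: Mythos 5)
Your proposal follows the same Bai--Ng (2002) information-criterion route that the paper takes: split into overfitting ($k\ge d$) and underfitting ($k<d$), use equation (\ref{averageY}) and Lemma \ref{LnN} to get $V(k,\widehat Y(k)) - V(d,\widehat Y(d)) = O_{\mathbb{P}}(C_{nN}^{-2})$ for $k\ge d$, and show a strictly positive liminf for $k<d$. The paper's version of the underfitting bound is phrased via the intermediate comparison $V(k,\mathbb{Z}J_k)-V(d,\mathbb{Z})$ and the explicit matrix $R_k$, giving the limit $\operatorname{tr}(R_k\Sigma_\lambda)$, whereas you state it as $\sum_{j>k}c_j$ via the rank-$k$ approximation of $\mathbb{Z}\Lambda^\top$; these are the same quantity because $\widehat Y(k)$ is built from the top-$k$ PCA directions of $\mathbb{X}\mathbb{X}^\top$, but you should make the identification explicit rather than treat it as automatic. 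The conditioning argument you add for the randomness of $\Sigma_Z$ (choosing $\eta>0$ so that the random limit exceeds $\eta$ with probability $1-\varepsilon$) is exactly the kind of step the paper elides with "the fact that $\Sigma_Z$ is not deterministic is not essential"; spelling it out as you do is a useful clarification, and the argument as a whole is sound.
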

\begin{proof}
The same arguments given in the proof of Theorem 1 in Bai and Ng~\cite{bai} apply in our context. In particular, Lemmas 2, 3 and 4 in Bai and Ng~\cite{bai} can be similarly proved in our context as well by using Assumptions \textbf{(D1, D2, D3, D4)} and the fact that $\Sigma_\lambda\Sigma_Z$ has distinct eigenvalues a.s. In particular, the fact that $\Sigma_Z$ is not deterministic is not essential for the validity of the analogous results of Lemmas 2, 3 and 4 given by \cite{bai} in our context, as long as $rank~\Sigma_Z=d~a.s$ (Assumption \textbf{D1}). In particular, for $k< d$ let us define $J^\top_k:=\hat{Y}^\top(k)\mathbb{Z}\rho(n)\Lambda^\top\Lambda\delta(N)\in \mathbb{M}_{k\times d}$. In our context, Lemma 3 in \cite{bai} can be written as follows: There exists $\tau_k>0~a.s$ such that

$$\liminf_{n,N\rightarrow \infty}V(k,\mathbb{Z}J_k) - V(d,\mathbb{Z}) = tr (R_k.\Sigma_\lambda) =: \tau_k$$
in probability, where $R_k:=\Sigma_Z - \Sigma_Z \mathbb{H}_k(\mathbb{H}_k^\top\Sigma_Z\mathbb{H}_k)^{-1}\mathbb{H}^\top_k\Sigma_Z$ and $\mathbb{H}_k:=\lim_{n,N\rightarrow \infty}J_k$ exists due to Lemma \ref{Gmatrix}. By construction $rank~\mathbb{H}_k=k < d~a.s$. Assumptions \textbf{(D1-D2)} yield $tr (R_k.\Sigma_\lambda)>0$ a.s. By writing

$$PC(k) - PC(d) = V(k,\hat{Y}(k)) - V(d, \hat{Y}(d)) - (d-k)q(n,N)$$
and splitting
\begin{eqnarray*}
 V(k,\hat{Y}(k)) - V(d, \hat{Y}(d))&=& [V(k,\hat{Y}(k)) - V(k,\mathbb{Z}J_k)] + [V(k,\mathbb{Z}J_k) -  V(d, \mathbb{Z}J_d) ]\\
 & &\\
 &+& [V(d, \mathbb{Z}J_d) - V(d, \hat{Y}(d))],
\end{eqnarray*}
we shall use the same argument in the proof of Th 1 in \cite{bai} to conclude that

$$\lim_{n,N\rightarrow\infty}\mathbb{P}\{PC(k)< PC(d)\}=0,$$
for each $k <  d$. If $kmax\ge k \ge d$, then similar to Lemma 4 in \cite{bai}, Assumptions ~\textbf{(D1,D2, D3, D4)} and the fact that the eigenvalues of $\Sigma_{\lambda}\Sigma_{Z}\in \mathbb{M}_{d\times d}$ are distinct almost surely yield

$$V(k, \hat{Y}(k)) - V(d, \hat{Y}(d))) = O_{\mathbb{P}}(C_{nN}^{-2}).$$
The rest of the proof is identical to the proof of Th 1 in \cite{bai}, so we omit the details.
\end{proof}

\subsection{Main Results}\label{mainsection} Let us now present the main results of this section. The following list of assumptions will also be in force throughout this section.

\

\noindent \textbf{(Q1)} The eigenvalues of $\Sigma_\lambda \Sigma_Z\in \mathbb{M}_{d\times d}$ are distinct almost surely.

\

\noindent \textbf{(Q2)} We assume

$$\rho(n)\sum_{1\le \ell < s\le \bar{n}}|y^k_{t_\ell}y^k_{t_s}|$$
is bounded in probability for every $k\in \{1, \ldots, d\}$.

\

\noindent \textbf{(Q3)}

$$\rho(n)\sum_{1\le \ell < s\le \bar{n}}y^k_{t_\ell}y^k_{t_s}\langle \varepsilon_{t_\ell},\lambda_r\rangle_{\mathbb{R}^N}\delta(N)\langle \varepsilon_{t_s},\lambda_j\rangle_{\mathbb{R}^N}\delta(N)\stackrel{p}{\to} 0$$
as $n,N\rightarrow \infty$ for each $k,r,j\in \{1,\ldots,d \}$.

\

\noindent \textbf{(Q4)} There exists a sequence of natural numbers $\{\gamma(n); n\ge 1\}$ decaying to zero such that

$$\mathbb{E}\sum_{i=1}^{\bar{n}}\|\Delta \varepsilon_{t_i}\|^2_{\mathbb{R}^N}\delta(N)=O(\gamma(n)).$$

\

\noindent \textbf{(Q5)} $\sup_{0\le t\le T}\|\varepsilon_t \|^2_E$ is bounded in probability and for each $i\in \{1, \ldots, d\}$,

$$\rho(n)\sum_{1\le \ell < s\le \bar{n}}|y^i_{t_\ell}y^i_{t_s}|\|\varepsilon_{t_\ell}\|_E\|\varepsilon_{t_s}\|_E\stackrel{p}\to 0$$
as $n\rightarrow \infty$.

\

\begin{remark}
Assumption \textbf{(Q1)} is essential to our estimation procedure because it yields an asymptotic $Y\in \mathcal{X}^d$ and a random basis for $V$ which will allow us to construct a consistent pair of estimators $(\hat{\mathcal{Q}}, \hat{\mathcal{N}})$ for the splitting $V = \mathcal{Q}\oplus \mathcal{N}$ of the invariant manifold $V$. The technical conditions~\textbf{(Q2, Q3, Q5)} are not strong since they impose a very mild growth condition on the eigenvectors of $\mathbb{X}^\top \mathbb{X}$. Assumption~\textbf{(Q4)} is quite natural for error structures arising in space-time data generated by stochastic PDEs. For example, as far as the consistency problem of the HJM model (see section 4.2), assumption \textbf{(Q4)} means that the initial fitting method used to interpolate points which generates $X$ cannot introduce an extrinsic volatility for the market. In other words, \textbf{(Q4)} rules out pure martingale error structures.
\end{remark}

The starting point for the estimation of $(\mathcal{Q},\mathcal{N})$ is to take advantage of the identities (\ref{goodN1}) and (\ref{goodN2}) based on a quadratic variation matrix $[Y]_T$ constructed from an asymptotic $Y \in \mathcal{X}^d$ satisfying Assumptions \ref{A1} and \ref{A2}. We define such process as follows: Let $Z$ be a factor representation of (\ref{spde1}) satisfying Assumption (A2) and the (\textbf{D1-D2-D3-D4-Q1}) and let $G$ be the associated matrix defined in Lemma \ref{Gmatrix}. Since $G$ is nonsingular a.s and $\Sigma_\lambda$ is positive definite, then the random matrix matrix $\mathcal{A} = \mathcal{C}^{-1}G\Sigma_{\lambda} = (A_{ij})_{1\le i,j\le d}\in \mathbb{M}_{d\times d}$ given by

\label{Aoperator}
\begin{equation}
A_{ij}=\sum_{k=1}^d c^{-1}_iG_{ik}\int_a^b\lambda_k(x)\lambda_j(x)dx;~1\le i,j\le d,
\end{equation}
is non-singular a.s. Then we shall apply Remark \ref{randomrep1} to state that $\mathcal{A}Z\in \mathcal{X}^d$ is a truly $d$-dimensional process and it is a factor measurable process realizing (\ref{randomrep2}) for the basis (loading factors) $(\mathcal{A}^{-1})^\top\lambda$. From Remark \ref{invprop}, $\mathcal{A}Z$ satisfies Assumptions \ref{A1} and \ref{A2}.

In the sequel, for a given factor representation $Z$ of (\ref{spde1}) satisfying Assumption (A2) and the assumptions in Lemma \ref{Gmatrix}, we set $Y = \mathcal{A}Z$. Let $\widehat{[Y]}_T:=(\hat{m}_{\ell k})_{1\le \ell,k\le \hat{d}}$ and $[Y]_T:=(m_{\ell k})_{1\le \ell,k\le d}$ be the matrices given, respectively, by

\begin{equation}\label{hatquadmatrix}
\hat{m}_{\ell k}:= \sum_{i=1}^{n-1} \big(\hat{Y}_{t_{i+1},\ell}(\hat{d}) -\hat{Y}_{t_{i},\ell}(\hat{d})\big)
\big( \hat{Y}_{t_{i+1},k}(\hat{d}) -\hat{Y}_{t_{i},k}(\hat{d})\big),
\end{equation}
for $1\le \ell,k\le \hat{d}$ and

$$m_{s v}:= [Y^s,Y^{v}]_T;~1\le s,v\le d.$$
We stress that $Y\in \mathcal{X}^d$ has a quadratic variation matrix in the sense of Definition \ref{qdef}.

\begin{proposition}\label{conv-qua-var}
If Assumptions~\textbf{(D1, D2, D3, D4)} and \textbf{(Q1, Q2, Q3, Q4)} hold true, then

$$\|\widehat{[Y]}_T - [Y]_T\|^2_{(2)}\stackrel{p}\to 0$$
as $n,N\rightarrow \infty$.
\end{proposition}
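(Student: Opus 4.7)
The plan is to combine the asymptotic factor-model theory from Section \ref{prpca} with the classical fact that the realised quadratic variation of a semimartingale converges in probability to its quadratic variation as the mesh shrinks. By Lemma \ref{pestimation}, the event $\{\hat d=d\}$ has probability tending to one, so I may restrict to this event and regard both $\widehat{[Y]}_T$ and $[Y]_T$ as elements of $\mathbb{M}_{d\times d}$.

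The starting point is the PCA fixed-point identity $\rho(n)\delta(N)\mathbb{X}\mathbb{X}^{\top}\hat Y(d)=\hat Y(d)L_{n,N}$. Substituting the factor decomposition $\mathbb{X}=\mathbb{Z}\Lambda^{\top}+\mathcal{E}$, exactly as in the derivation of Lemma \ref{LnN}, yields a representation
\[
\hat Y_{t_i}(d)=H_d^{\top}Z_{t_i}+R_{n,N}(t_i),
\]
where $H_d:=\bigl(\delta(N)\Lambda^{\top}\Lambda\bigr)\rho(n)\mathbb{Z}^{\top}\hat Y(d)L_{n,N}^{-1}$ and the remainder $R_{n,N}$ collects the three $\mathcal{E}$-driven terms $\rho(n)\delta(N)L_{n,N}^{-1}\mathbb{Z}\Lambda^{\top}\mathcal{E}^{\top}\hat Y$, $\rho(n)\delta(N)L_{n,N}^{-1}\mathcal{E}\Lambda\mathbb{Z}^{\top}\hat Y$ and $\rho(n)\delta(N)L_{n,N}^{-1}\mathcal{E}\mathcal{E}^{\top}\hat Y$. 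Using Lemma \ref{LnN} ($L_{n,N}\xrightarrow{p}\mathcal{C}$), Lemma \ref{Gmatrix} ($\rho(n)\mathbb{Z}^{\top}\hat Y(d)\xrightarrow{p}G^{\top}$), and (D2) ($\delta(N)\Lambda^{\top}\Lambda\to\Sigma_\lambda$), one obtains $H_d\xrightarrow{p}\Sigma_\lambda G^{\top}\mathcal{C}^{-1}=\mathcal{A}^{\top}$.

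Writing $\Delta\hat Y_{t_i}(d)=H_d^{\top}\Delta Z_{t_i}+\Delta R_{n,N}(t_i)$ and expanding $\hat m_{\ell k}$ produces a main term $\bigl(H_d^{\top}[Z]_T^{(n)}H_d\bigr)_{\ell k}$, where $[Z]_T^{(n)}:=\sum_{i}\Delta Z_{t_i}\Delta Z_{t_i}^{\top}$, together with two cross terms and a quadratic remainder $\sum_i\Delta R_{n,N}(t_i)\Delta R_{n,N}(t_i)^{\top}$. Since $Z\in\mathcal{S}^d$, the realised covariation $[Z]_T^{(n)}$ converges to $[Z]_T$ in probability, so the continuous mapping theorem combined with $H_d\xrightarrow{p}\mathcal{A}^{\top}$ gives
\[
H_d^{\top}[Z]_T^{(n)}H_d \xrightarrow{p} \mathcal{A}[Z]_T\mathcal{A}^{\top}=[\mathcal{A}Z]_T=[Y]_T
\]
in Hilbert--Schmidt norm. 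It then remains to show that the cross terms and the quadratic remainder vanish in probability, and this is where the principal technical difficulty lies.

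The key obstacle is that $\hat Y(\hat d)$ is not a semimartingale, so its increments at the high-frequency scale inherit fluctuations from $\varepsilon$ that could survive in the limit of the sample bracket. The quadratic remainder is controlled via repeated Cauchy--Schwarz, Lemma \ref{techlemma} (which governs inner products of $\hat Y(d)$ with the $\varepsilon$-related kernels $\gamma_N$, $\theta_N$, $\xi_N$, $\eta_N$), and crucially Assumption (Q4), which forces $\sum_i\|\Delta\varepsilon_{t_i}\|^2_{\mathbb{R}^{\bar N}}\delta(N)=O_P(\gamma(n))$ with $\gamma(n)\to 0$ and thereby rules out any extrinsic volatility injected by the noise. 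The cross terms, after expanding $\Delta R_{n,N}$, reduce to quantities of the form $\rho(n)\sum_{\ell<s}y^k_{t_\ell}y^k_{t_s}\langle\varepsilon_{t_\ell},\lambda_r\rangle\delta(N)\langle\varepsilon_{t_s},\lambda_j\rangle\delta(N)$, whose convergence to zero is exactly Assumption (Q3); the auxiliary growth controls (Q2) and (Q5) handle the off-diagonal sums $\sum_{\ell<s}|y^k_{t_\ell}y^k_{t_s}|$ and the pairings against $\|\varepsilon\|_E$. Assembling these pieces and applying the triangle inequality to the Hilbert--Schmidt norm completes the argument.
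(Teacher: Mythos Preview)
Your overall strategy---decompose $\hat Y_{t_i}(d)=H_d^{\top}Z_{t_i}+R_{n,N}(t_i)$, show the main term $H_d^{\top}[Z]_T^{(n)}H_d$ converges to $\mathcal{A}[Z]_T\mathcal{A}^{\top}=[Y]_T$, and show the residual contributions vanish---is exactly the paper's approach. However, you have misallocated the assumptions among the residual pieces, and this matters for seeing where the real work is.

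Once $\sum_i\|\Delta R_{n,N}(t_i)\|^2\xrightarrow{p}0$ is established, the two cross terms $\sum_i(H_d^{\top}\Delta Z_{t_i})\Delta R_{n,N}(t_i)^{\top}$ vanish automatically by Cauchy--Schwarz, because $\sum_i\|\Delta Z_{t_i}\|^2=O_P(1)$. So the cross terms need nothing beyond what the quadratic remainder already uses; in particular they do \emph{not} require (Q3). The substantive difficulty is entirely in the quadratic remainder $\sum_i(\Delta R^k_{n,N}(t_i))^2$. Since $R^k_{n,N}(t_i)$ is itself a sum over $\ell$ of $y^k_{t_\ell}$ times the noise kernels $\varphi_N(t_\ell,t_i)=\gamma_N+\theta_N+\xi_N+\eta_N$, squaring produces a diagonal part $\rho(n)\sum_\ell|y^k_{t_\ell}|^2(\Delta_i\varphi_N(t_\ell,t_i))^2$ (controlled by the normalisation $\sum_\ell|y^k_{t_\ell}|^2=1$ together with (Q4) and (D3)) and an off-diagonal part $2\rho(n)\sum_{\ell<s}y^k_{t_\ell}y^k_{t_s}\Delta_i\varphi_N(t_\ell,t_i)\Delta_i\varphi_N(t_s,t_i)$. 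It is in this off-diagonal piece that (Q2) and (Q3) are needed: (Q2) bounds $\rho(n)\sum_{\ell<s}|y^k_{t_\ell}y^k_{t_s}|$, and (Q3) handles exactly the $\xi_N\times\xi_N$ cross term that does not carry a factor of $\sum_i\|\Delta\varepsilon_{t_i}\|^2$. Lemma \ref{techlemma} is not the right tool here---it bounds averages $\rho(n)\sum_\ell\hat Y_{t_\ell}(d)\cdot(\text{kernel})$, not sums of squared increments---and (Q5) is neither assumed in the statement nor used in the proof.
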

\begin{proof}
At first, by taking $n,N$ large enough, assumptions \textbf{(D1, D2, D3, D4, Q1)} allow us to use Lemma \ref{pestimation} and we assume that $\hat{d}=d$ because $\hat{d}$ is an integer-valued consistent estimator. In the sequel, if $P$ is a real-valued process then we write $\Delta_{t_i} P:= P_{t_{i+1}} - P_{t_{i}}; 1\le i \le n-1$. By using the definition of $\hat{Y}(d)$, one can actually write

\begin{small}
\begin{eqnarray*}
\hat{Y}_{t_i}(d)&=&H^{\top}_{d} Z_{t_i}+ L^{-1}_{nN}\rho(n)\sum_{\ell=1}^{\bar{n}}\hat{Y}_{t_{\ell}}(d)\Big(\gamma_N(t_{\ell},t_i) +\theta_N(t_{\ell},t_i) +\xi_N(t_{\ell},t_i) + \eta_N(t_{\ell},t_i) \Big)\\
& &\\
&=:& H^{\top}_{d}Z_{t_i}+ \hat{R}_{t_i}(n,N),
\end{eqnarray*}
\end{small}
\noindent where $H^\top_d := L^{-1}_{nN} \hat{Y}^{\top}(d)\mathbb{Z}\rho(n)\Lambda^{\top}\Lambda\delta(N)$ and $L_{nN}$ is
be the diagonal matrix of the eigenvalues of $\rho(n)\delta(N)\mathbb{X}\mathbb{X}^\top$ arranged in decreasing order (see Lemma \ref{LnN})).

To shorten notation, we set $\hat{W}_{t_i}:=H^{\top}_{d}Z_{t_i}$ and $\varphi_N(t_\ell,t_i):=\gamma_N(t_{\ell},t_i) + \theta_N(t_\ell,t_i) + \xi_N(t_\ell,t_i) + \eta_N(t_\ell,t_i)$ for $1\le i,\ell \le \bar{n}$. In the sequel, for $r,\ell=1,\ldots, d$ we denote $O_{\mathbb{P};(r,\ell)}(\xi_n)$ any random variable which is $O(\xi_n)$ in probability, $C$ is a constant which may differ from line to line and let us denote the $d\times d$-matrix given by $\tilde{W}:=(\hat{w}_{sq})$ where

$$\hat{w}_{sq}:=\sum_{i=1}^{\bar{n}-1}\Delta\hat{W}_{t_i}(s)\Delta \hat{W}_{t_i}(q)$$
for $s,q=1\ldots,d$. We claim that

\begin{equation}\label{qcon1}
\sum_{m=1}^{d}\sum_{i=1}^{\bar{n}-1}\Big(\Delta\hat{R}^{m}_{t_i}(n,N)\Big)^2 \stackrel{p}{\to} 0
\end{equation}
and
\begin{equation}\label{qcon2}
vec~(\tilde{W})\stackrel{p}{\to} vec~([Y]_T)
\end{equation}
as $n,N\rightarrow \infty$, where $vec$ is the usual vectorization operator. Let $L_{nN} = ~diag~(\gamma_1, \ldots, \gamma_{\bar{n}})$. By the very definition,
$$(H^\top_d)_{ij} = \sqrt{\rho(n)}\delta(N)\sum_{k=1}^d\big(\sum_{\ell=1}^{\bar{n}}\gamma_i^{-1}y^i_{t_\ell}Z^k_{t_\ell} \big)\big(\sum_{m=1}^N \lambda_k(x_m)\lambda_j(x_m) \big); 1\le i,j\le d.$$
By Lemma \ref{LnN}, we know that $L_{n,N}\stackrel{p}{\to} diag~(c_1, \ldots, c_d)$ as $n,N\rightarrow \infty$, where $(c_1,\ldots, c_d)$ are the eigenvalues of $\Sigma_\lambda\Sigma_Z$. Then Lemma \ref{Gmatrix} yields

\begin{eqnarray*}
\hat{w}_{sq} &=& \sum_{j=1}^d\sum_{r=1}^d (H^\top_d)_{qj} (H^\top_d)_{sr}\sum_{i=1}^{\bar{n}-1} \Delta Z^j_{t_i} \Delta Z^r_{t_i}\\
& &\\
&\stackrel{p}\to& \sum_{j=1}^d\sum_{r=1}^d \sum_{k=1}^d\sum_{m=1}^d\langle \lambda_k, \lambda_j \rangle_{L^2([a,b];\mathbb{R})} c_q^{-1}G_{qk}
\langle \lambda_m, \lambda_r \rangle_{L^2([a,b];\mathbb{R})} c_s^{-1}G_{sm}[Z^j, Z^r]_T\\
& &\\
&=&[Y^s, Y^q]_T; 1\le s,q\le d,
\end{eqnarray*}
as $n,N\rightarrow \infty$.  This shows that (\ref{qcon2}) holds. By noting that

\begin{eqnarray}
\nonumber\Delta \hat{Y}_{t_i,\ell}(d)\Delta \hat{Y}_{t_i,k}(d)&=&\Delta\hat{W}_{t_i}(k)\Delta\hat{W}_{t_i}(\ell)+ \nonumber\Delta\hat{W}_{t_i}(k)\Delta\hat{R}_{t_i}^{\ell}(n,N)\\
\label{identM}& &\\
\nonumber&+&\Delta\hat{R}^k_{t_i}(n,N)\Delta\hat{W}_{t_i}(\ell) + \nonumber\Delta\hat{R}^k_{t_i}(n,N)\Delta\hat{R}^{\ell}_{t_i}(n,N);~1\le k,\ell \le d,
\end{eqnarray}
we only need to check~(\ref{qcon1}) in order to conclude the proof. Let $\hat{S}_{t_i}(n,N): = L_{n,N}\hat{R}_{t_i}(n,N)\in \mathbb{M}_{d\times 1}$. From Lemma \ref{LnN}, we know that  $\|L^{-1}_{n,N}\|_{(2)}=O_{\mathbb{P}}(1)$, so we only need to check that

\begin{equation}\label{qcon3}
\sum_{m=1}^{d}\sum_{i=1}^{\bar{n}-1}\Big(\Delta\hat{S}^{m}_{t_i}(n,N)\Big)^2 \stackrel{p}{\to} 0\quad \text{as}~n,N\rightarrow \infty.
\end{equation}
At first, for each $k\in \{1,\ldots, d \}$ we shall write

\begin{eqnarray}
\nonumber\sum_{i=1}^{\bar{n}-1}\Big(\Delta\hat{S}^{k}_{t_i}(n,N)\Big)^2&=&
\rho(n)\sum_{i=1}^{\bar{n}-1}\sum_{\ell=1}^{\bar{n}}|y^k_{t_\ell}|^2(\Delta_i\varphi_N(t_\ell,t_i))^2\\
\nonumber & &\\
\nonumber&+& 2\rho(n) \sum_{i=1}^{\bar{n}-1}\sum_{1\le\ell < s\le \bar{n}}y^k_{t_\ell}\Delta_i\varphi_N(t_\ell, t_i)y^k_{t_s}\Delta_i\varphi_N(t_s,t_i)\\
\nonumber& &\\
\label{eq1}&=:& T_1(n,N) + T_2(n,N)
\end{eqnarray}
where $\Delta_i\varphi_N(t_\ell,t_i):= \varphi_N(t_\ell, t_i) - \varphi_N(t_\ell,t_{i-1}); 1\le i \le \bar{n}-1, 1\le \ell \le \bar{n}$. We divide the argument into two steps.

\

\noindent \textit{Analysis of}~$T_1(n,N)$.  It is sufficient to prove that

\begin{eqnarray*}
& &\rho(n)\sum_{i=1}^{\bar{n}-1}\sum_{\ell=1}^{\bar{n}}|y^k_{t_\ell}|^2\Big[(\Delta_i\gamma_N(t_\ell,t_i))^2 + (\Delta_i\theta_N(t_\ell,t_i))^2 \\
& &\\
&+& (\Delta \xi_N(t_\ell,t_i))^2 + (\Delta_i\eta_N(t_\ell,t_i))^2\Big]=O_\mathbb{P}(\rho(n))
\end{eqnarray*}
for each $k\in \{1, \ldots, d\}$. In fact, a simple application of Cauchy-Schwartz inequality and the fact that $\sum_{\ell=1}^{\bar{n}}|y^k_{t_\ell}|^2=1$ yield the following estimates

\begin{small}
\begin{equation}\label{j1}
\rho(n)\sum_{i=1}^{\bar{n}-1}\sum_{\ell=1}^{\bar{n}}|y^k_{t_\ell}|^2 (\Delta_i\gamma_N(t_\ell,t_i))^2
\le \rho(n)\mathbb{E}\sum_{m=1}^{\bar{N}}\sup_{s}|\varepsilon_{t_s}(x_m)|^2\delta(N)\sum_{i=1}^{\bar{n}-1}\sum_{k=1}^{\bar{N}}|
\Delta\varepsilon_{t_i}(x_k)|^2\delta(N),
\end{equation}

\begin{eqnarray}
\nonumber\rho(n)\sum_{i=1}^{\bar{n}-1}\sum_{\ell=1}^{\bar{n}}|y^k_{t_\ell}|^2 (\Delta_i\theta_N(t_\ell,t_i))^2&\le &2
\nonumber\Bigg(\rho(n)\sum_{i=1}^{\bar{n}-1}\sum_{\ell=1}^{\bar{n}} |y^k_{t_\ell}|^2 (\Delta_i\gamma_N(t_\ell,t_i))^2\Bigg)^{1/2}\\
\nonumber& &\\
\nonumber&\times & \Bigg(\rho(n)\sum_{i=1}^{\bar{n}-1}\sum_{\ell=1}^{\bar{n}} \nonumber|\delta(N)\varepsilon^{\top}_{t_\ell}\Delta\varepsilon_{t_i}y^k_{t_\ell}|^2\Bigg)^{1/2}\\
\nonumber& &\\
\label{j2}&+& \rho(n)\sum_{i=1}^{\bar{n}-1}\sum_{\ell=1}^{\bar{n}} |y^k_{t_\ell}|^2 \Big( (\Delta_i\gamma_N(t_\ell,t_i))^2 + (\delta(N)\varepsilon^{\top}_{t_\ell}\Delta\varepsilon_{t_i})^2\Big),
\end{eqnarray}

\begin{equation}\label{j3}
\rho(n)\sum_{i=1}^{\bar{n}-1}\sum_{\ell=1}^{\bar{n}}|y^k_{t_\ell}|^2  (\Delta_i\xi_N(t_\ell,t_i))^2\le C\rho(n) \sup_{t}\|\varepsilon_t\|^2_{\mathbb{R}^{\bar{N}}}\delta(N)\sum_{r=1}^d\sum_{i=1}^{\bar{n}-1}|\Delta Z^r_{t_i}|^2\|\lambda_r\|^2_{\mathbb{R}^{\bar{N}}}\delta(N)
\end{equation}

\begin{equation}\label{j4}
\rho(n)\sum_{i=1}^{\bar{n}-1}\sum_{\ell=1}^{\bar{n}}|y^k_{t_\ell}|^2  (\Delta_i\eta_N(t_\ell,t_i))^2 \le C\rho(n)\sum_{i=1}^{\bar{n}-1}\|\Delta \varepsilon_{t_i}\|^2_{\mathbb{R}^{\bar{N}}}\delta(N)\sum_{r=1}^d\sup_t|Z^r_t|^2\|\lambda_r\|^2_{\mathbb{R}^{\bar{N}}}\delta(N)
\end{equation}
\end{small}
The estimates~(\ref{j1}),(\ref{j2}), (\ref{j3}) and~(\ref{j4}) allow us to conclude that $T_1(n,N)=O_{\mathbb{P}}(\rho(n))$.

\

\noindent \textit{Analysis of}~$T_2(n,N)$. The estimates for the crossing terms are more involved. Let us split $T_2(n,N)$ according to the terms $\Delta_i\gamma_N(t_{\ell},t_i),$ $\Delta_i\theta_N(t_\ell,t_i)$, $\Delta_i\xi_N(t_\ell,t_i)$ and $\Delta_i\eta_N(t_\ell,t_i)$ as follows. To shorten  notation, in the sequel we denote $J(k,n)=\rho(n)\sum_{1\le\ell < s\le \bar{n}}|y^k_{t_\ell}y^k_{t_s}|$. Cauchy-Schwartz inequalities and routine algebraic manipulations yield the following estimates

\[
\rho(n) \sum_{i=1}^{\bar{n}-1}\sum_{1\le\ell < s\le \bar{n}}|y^k_{t_\ell} \Delta_i\gamma_N(t_\ell, t_i)y^k_{t_s} \Delta_i\xi_N(t_s,t_i)|\le C J(k,n)\Big(\sup_{t}\|\varepsilon_t\|^2_{\mathbb{R}^{\bar{N}}}\delta(N)\Big)^{1/2}\]

\[\Bigg[(\mathbb{E}\sup_{t}\|\varepsilon_t\|^2_{\mathbb{R}^{\bar{N}}}\delta(N))\sum_{i=1}^{\bar{n}-1}\sum_{r=1}^p|
\Delta Z^r_{t_i}|^2\|\lambda_r\|^2_{\mathbb{R}^{\bar{N}}}\delta(N)\Bigg]^{1/2}
\Bigg[\sum_{i=1}^{\bar{n}-1}\mathbb{E}\|\Delta\varepsilon_{t_i}\|^2_{\mathbb{R}^{\bar{N}}}\delta(N)\Bigg]^{1/2},\]

\begin{eqnarray*}
\rho(n) \sum_{i=1}^{\bar{n}-1}\sum_{1\le\ell < s\le \bar{n}}|y^k_{t_\ell} \Delta_i\xi_N(t_\ell,t_i)y^k_{t_s} \Delta_i\eta_N(t_s,t_i)|&\le& C J(k,n)\sum_{r,q=1}^pO_{\mathbb{P};(r,q)}(1)\\
& &\\
&\times&\Bigg[\sum_{i=1}^{\bar{n}-1}\|\Delta\varepsilon_{t_i}
\|^2_{\mathbb{R}^{\bar{N}}}\delta(N)\Bigg]^{1/2},
\end{eqnarray*}

\begin{eqnarray*}
\rho(n) \sum_{i=1}^{\bar{n}-1}\sum_{1\le\ell < s\le \bar{n}}|y^k_{t_\ell} \Delta_i\theta_N(t_\ell,t_i)y^k_{t_s} \Delta_i \xi_N(t_s,t_i)|&\le& C J(k,n)\sum_{r=1}^pO_{\mathbb{P};r}(1)\sup_t\|\varepsilon\|_{\mathbb{R}^{\bar{N}}}\\
& &\\
&\times& \Bigg(\sum_{i=1}^{\bar{n}-1}|\Delta Z^r_{t_i}|^2\sum_{i=1}^{\bar{n}-1} \|\Delta\varepsilon_{t_i}
\|^2_{\mathbb{R}^{\bar{N}}}\delta(N)\Bigg)^{1/2}.
\end{eqnarray*}
We also shall write
\begin{eqnarray*}
\rho(n) \sum_{i=1}^{\bar{n}-1}\sum_{1\le\ell < s\le \bar{n}}y^k_{t_\ell} \Delta_i\theta_N(t_\ell,t_i)y^k_{t_s} \Delta_i \xi_N(t_s,t_i)&=& \sum_{r,j=1}^pO_{\mathbb{P};(r,j)}(1)\rho(n)\sum_{1\le\ell < s\le \bar{n}}y^k_{t_\ell}y^k_{t_s}\\
& &\\
&\times&\langle \varepsilon_{t_\ell},\lambda_r\rangle_{\mathbb{R}^{\bar{N}}}\delta(N)
\langle \varepsilon_{t_s},\lambda_j\rangle_{\mathbb{R}^{\bar{N}}}\delta(N)
\end{eqnarray*}
and

$$
\rho(n) \sum_{i=1}^{\bar{n}-1}\sum_{1\le\ell < s\le \bar{n}}|y^k_{t_\ell} \Delta_i\gamma_N(t_\ell,t_i)y^k_{t_s} \Delta_i\gamma_N(t_s,t_i)|\le J(k,n)O_{\mathbb{P}}(1)\sum_{i=1}^{\bar{n}-1}\mathbb{E}\|\Delta \varepsilon_{t_i}\|^2_{\mathbb{R}^{\bar{N}}}\delta(N),
$$

$$
\rho(n) \sum_{i=1}^{\bar{n}-1}\sum_{1\le\ell < s\le \bar{n}}|y^k_{t_\ell} \Delta_i\gamma_N(t_\ell,t_i)y^k_{t_s} \Delta_i\eta_N(t_s,t_i)|\le C J(k,n)\Bigg(\sum_{i=1}^{\bar{n}-1}\|\Delta \varepsilon_{t_i}\|^2_{\mathbb{R}^{\bar{N}}}\delta(N)\Bigg)^{1/2},
$$

$$
\rho(n) \sum_{i=1}^{\bar{n}-1}\sum_{1\le\ell < s\le \bar{n}}|y^k_{t_\ell} \Delta_i\eta_N(t_\ell, t_i)y^k_{t_s} \Delta_i\eta_N(t_s,t_i)|\le C J(k,n)\sum_{q,r=1}^pO_{\mathbb{P};(q,r)}\Bigg(\sum_{i=1}^{\bar{n}-1}\|\Delta \varepsilon_{t_i}\|^2_{\mathbb{R}^{\bar{N}}}\delta(N)\Bigg).
$$
The remainder terms in $T_2(n,N)$ are analogous. Summing up the above estimates, we conclude that $T_2(n,N)\rightarrow 0$ in probability as $n,N\rightarrow \infty$. From identities~(\ref{identM}), (\ref{qcon3}), (\ref{eq1}) and (\ref{qcon1}), we conclude the proof.
\end{proof}

The next step is the analysis of the convergence of the loading factor estimators defined as follows. Let

$$\hat{\Lambda}^\top: = \rho(n)\hat{Y}^\top(\hat{d})\mathbb{X}\in \mathbb{M}_{\hat{d}\times N}$$
and

$$\hat{\varphi}_i(x):=\sqrt{\rho(n)}\sum_{k=1}^{\bar{n}} y^i_{t_k} X_{t_k}(x), \quad \xi_k(x): = ((\mathcal{A}^{-1})^\top \lambda(x))_k$$
for $a\le x\le b, 1\le i\le \hat{d}, 1\le k\le d.$ Since $\mathcal{A}\in \mathbb{M}_{d\times d}$ is non-singular a.s, then $\{\xi_1(\omega, \cdot), \ldots, \xi_d(\omega,\cdot)\}$ is a basis for $V$ for almost all $\omega\in \Omega$. More importantly,

 $$r_t = \phi_t + \sum_{k=1}^d Y^k_t \xi_k; 0\le t\le T.$$
where $Y = \mathcal{A}Z\in \mathcal{X}^d$.

\begin{proposition}\label{conv-norm}
If Assumptions \textbf{(D1, D2, D3, D4, Q1, Q5)} hold true, then
$$\sum_{j=1}^{\hat{d}}\|\hat{\varphi}_j - \xi_j\|^ 2_E\stackrel{p}\to 0 $$
as $n,N\rightarrow \infty$.
\end{proposition}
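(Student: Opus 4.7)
The plan is to first invoke the consistency of the dimension estimator from Lemma~\ref{pestimation} to reduce the claim to the event $\{\hat d=d\}$, whose probability tends to one. On this event the sum has $d$ terms and $\xi_1,\ldots,\xi_d$ and $\hat\varphi_1,\ldots,\hat\varphi_d$ can be compared component by component.

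The key preparatory step is to rewrite $\xi_i$ in terms of the matrix $G$ produced by Lemma~\ref{Gmatrix}. Using the definition $\mathcal{A}=\mathcal{C}^{-1}G\Sigma_\lambda$ together with the explicit form $G=\mathcal{C}^{1/2}\Phi^\top\Sigma_\lambda^{-1/2}$ and $\Phi^\top\Phi=I_d$, a short computation yields $\mathcal{A}=\mathcal{C}^{-1/2}\Phi^\top\Sigma_\lambda^{1/2}$, hence $(\mathcal{A}^{-1})^\top=G$ a.s., so $\xi_i(x)=\sum_{j=1}^d G_{ij}\lambda_j(x)$. Substituting $X_{t_k}(x)=\sum_{j=1}^d Z^j_{t_k}\lambda_j(x)+\varepsilon_{t_k}(x)$ into the definition of $\hat\varphi_i$ then produces the basic decomposition
$$\hat{\varphi}_i(x)-\xi_i(x) \;=\; \sum_{j=1}^d \lambda_j(x)\Big(\sqrt{\rho(n)}\sum_{k=1}^{\bar n} y^i_{t_k}Z^j_{t_k} - G_{ij}\Big) \;+\; \sqrt{\rho(n)}\sum_{k=1}^{\bar n} y^i_{t_k}\varepsilon_{t_k}(x).$$

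The remainder of the proof then controls the two pieces separately. The signal piece is handled by Lemma~\ref{Gmatrix}, which forces each scalar in parentheses to zero in probability; combined with $\|\lambda_j\|_E<\infty$ from Assumption (D2) and the finite sum over $j$, its $E$-norm is $o_\mathbb{P}(1)$. For the noise piece I expand its squared $E$-norm as
$$\rho(n)\sum_{k=1}^{\bar n}|y^i_{t_k}|^2\|\varepsilon_{t_k}\|_E^2 \;+\; 2\rho(n)\sum_{1\le k<\ell\le\bar n}y^i_{t_k}y^i_{t_\ell}\langle\varepsilon_{t_k},\varepsilon_{t_\ell}\rangle_E.$$
Because the normalization $\rho(n)\hat Y^\top(d)\hat Y(d)=I_d$ translates into $\sum_k |y^i_{t_k}|^2=1$, the diagonal part is majorized by $\rho(n)\sup_{t}\|\varepsilon_t\|_E^2$ and vanishes by the boundedness clause of (Q5) together with $\rho(n)\to 0$. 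Cauchy--Schwarz in $E$ then dominates the off-diagonal part by $2\rho(n)\sum_{k<\ell}|y^i_{t_k}y^i_{t_\ell}|\|\varepsilon_{t_k}\|_E\|\varepsilon_{t_\ell}\|_E$, which is precisely the quantity sent to zero in probability by the second clause of (Q5). Summing over $i=1,\ldots,d$ gives the conclusion.

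The only delicate point I anticipate is the off-diagonal error piece: the unweighted sum $\rho(n)\sum_{k<\ell}|y^i_{t_k}y^i_{t_\ell}|$ can in principle be of order $\bar n$, so any crude bound is hopeless, and this is exactly why Assumption (Q5) has been formulated with the $\|\varepsilon_{t_\ell}\|_E$ weights built in. Every other step reduces to bookkeeping around the identifications supplied by Lemmas~\ref{Gmatrix} and~\ref{pestimation}.
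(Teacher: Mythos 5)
Your proposal is correct and follows essentially the same approach as the paper: the same signal/noise decomposition of $\hat{\varphi}_i-\xi_i$, the same use of Lemma~\ref{Gmatrix} and the identity $(\mathcal{A}^{-1})^\top=G$ for the signal part, and the same split of the squared $E$-norm of the noise term into a diagonal piece controlled by $\rho(n)\sup_t\|\varepsilon_t\|_E^2$ and an off-diagonal piece controlled by Cauchy--Schwarz and Assumption (Q5). One small attribution nit: $\|\lambda_j\|_E<\infty$ follows simply from $\lambda_j\in E$ (part of the setup), not from Assumption (D2), which only provides the pointwise bound $\sup_j\|\lambda(x_j)\|_{\mathbb{R}^d}<\infty$; this does not affect the validity of the argument.
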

\begin{proof}
Let us fix $i\in \{1, \ldots, d\}$. Since $\hat{d}$ is an integer-valued consistent estimator for $d$, then we shall assume that $\hat{d}=d$. Under \textbf{(D1, D2, D3, D4, Q1)}, $\{\xi_i; 1\le i \le d\}$ is a well-defined random basis for $V$. By the very definition,

\begin{eqnarray*}
\hat{\varphi}_i(x) &=& \sqrt{\rho(n)}\sum_{k=1}^{\bar{n}} y^i_{t_k}X_{t_k}(x) = \sum_{m=1}^d\Big(\sum_{k=1}^{\bar{n}}\sqrt{\rho(n)} y^i_{t_k}Z^m_{t_k}\Big)\lambda_m(x)\\
& &\\
&+& \sum_{k=1}^{\bar{n}}\sqrt{\rho(n)} y^i_{t_k}\varepsilon_{t_k}(x)=:R_{1,i}(x) + R_{2,i}(x), x\in [a,b].
\end{eqnarray*}
Let us recall that for any $f\in E$, we can compute the Sobolev norm as follows $\|f\|^2_E = \sup_{\Pi}\sum_{s_j\in \Pi}\frac{|\Delta f(s_j)|^2}{\Delta s_j}< \infty$ where the sup is taken over all partitions $\Pi$ of $[a,b]$. See e.g Prop 1.45 in~\cite{friz} for further details. If $\Pi = \{s_j\}_{j=1}^M$ is a partition of $[a,b]$, then

\begin{eqnarray*}
\sum_{s_j\in \Pi}\frac{|\Delta R_{2,i}(s_j)|^2}{\Delta s_j}&=& \sum_{s_j\in \Pi}\sum_{k=1}^{\bar{n}}\rho(n)|y^i_{t_k}|^2\frac{|\Delta \varepsilon_{t_k}(s_j)|^2}{\Delta s_j}\\
& &\\
&+& 2\rho(n)\sum_{1\le k < m\le \bar{n}} y^i_{t_k} y^i_{t_m}\sum_{s_j\in \Pi}\frac{\Delta \varepsilon_{t_k}(s_j)}{\Delta s_j}\frac{\Delta \varepsilon_{t_m}(s_j)}{\Delta s_j}\\
& &\\
&=:&I_{1,i} +I_{2,i}
\end{eqnarray*}
Since $\rho(n)\hat{Y}^\top(d) \hat{Y}(d) = I_d~a.s$, then \textbf{(Q5)} yields

$$|I_{1,i}|\le \sum_{\ell=1}^{\bar{n}}\rho(n)|y^i_{t_\ell}|^2\|\varepsilon_{t_\ell}\|^2_{E}\le \sup_{0\le t\le T}\|\varepsilon_t\|_E^2\rho(n)\stackrel{p}\to 0$$
as $n\rightarrow\infty$. Cauchy-Schwartz inequality and \textbf{(Q5)} yield

$$|I_{2,i}|\le 2\rho(n)\sum_{1\le \ell < s\le \bar{n}}|y^i_{t_\ell}y^i_{t_s}|\|\varepsilon_{t_\ell}\|_E\|\varepsilon_{t_s}\|_E\stackrel{p}\to 0$$
as $n\rightarrow \infty$. From Lemma \ref{Gmatrix}, we know that $(G^\top)^{-1} = \mathcal{A}$ so that $(\mathcal{A}^{-1})^\top = G$. Since $\{\lambda_1, \ldots, \lambda_d\}\subset E$, then we obviously have $\|R_{1,i}(\cdot)-\xi_i(\cdot)\|_E\stackrel{p}\to 0$ as $n\rightarrow \infty$. This concludes the proof.
\end{proof}

In the sequel, $\hat{p}$ is any consistent estimator for $dim~\mathcal{Q}$ based on $X$. See Appendix for details. Let $\hat{\mathcal{L}}\in \mathbb{M}_{\hat{d}\times \hat{d}}$ be the matrix whose rows are given by $\hat{\mathcal{L}}_i:=\hat{v}_i; 1\le i\le \hat{d}$, where $\{\hat{v}_1, \ldots, \hat{v}_{\hat{d}}\}$ is an orthonormal eigenvector set of the matrix $\widehat{[Y]}_T$ (see (\ref{hatquadmatrix})) associated to the ordered eigenvalues $\hat{\theta}_1\ge \hat{\theta}_2\ge \ldots \ge \hat{\theta}_{\hat{d}}$. Let us define

\begin{equation}\label{PCAest}
\hat{Z}^j_{t_i}:=~j\text{-th component of}~\hat{\mathcal{L}}\hat{Y}_{t_i}; 0\le i \le \bar{n}, 1\le j\le \hat{d}
\end{equation}
and
$$[\hat{Z}^j]_T:=\sum_{i=1}^{\bar{n}}\Big(\hat{Z}^j_{t_i} - \hat{Z}^j_{t_{i-1}}\Big)^2$$
over a sample $0=t_0 < t_1 < \ldots < t_{\bar{n}}=T$. By the very definition, $[\hat{Z}^j]_T = \hat{\theta}_j; 1\le j\le \hat{d}$.

Now we are able to present the main result of this article. Before this, we need an elementary lemma from linear algebra.

\begin{lemma}\label{Gram-Schmidt}
Let $v_1,\ldots,v_d$ be a set of $d$ linearly independent vectors in a real Hilbert space $H$ with inner product $\langle \cdot, \cdot \rangle_H$ and  $\mathcal{V} = span\{v_1,\ldots,v_d\}$. Let $T:\mathcal{V}\to \mathcal{V}$ be an orthogonal matrix. If $\tau_1,\ldots,\tau_d$ is the Gram-Schmidt orthonormalization of $v_1,\ldots,v_d$ and $w_1,\ldots,w_d$ is the Gram-Schmidt orthonormalization of $Tv_1,\ldots,Tv_d$, then we have
$$w_i = T\tau_i; \quad i=1,\ldots,d,\quad.$$
\end{lemma}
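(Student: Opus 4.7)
The plan is a short induction on $i$ exploiting the fact that $T$ is an isometry on $\mathcal{V}$, so it commutes with the two operations that Gram--Schmidt consists of: subtracting projections and normalizing.

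For the base case $i=1$, I would observe $\tau_1 = v_1/\|v_1\|_H$ and $w_1 = Tv_1/\|Tv_1\|_H$; since $T$ is orthogonal, $\|Tv_1\|_H = \|v_1\|_H$, so $w_1 = Tv_1/\|v_1\|_H = T\tau_1$.

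For the inductive step, assume $w_j = T\tau_j$ for all $j < i$. Writing the Gram--Schmidt formula
\begin{equation*}
\tau_i = \frac{u_i}{\|u_i\|_H}, \qquad u_i := v_i - \sum_{j=1}^{i-1} \langle v_i, \tau_j\rangle_H\, \tau_j,
\end{equation*}
and applying $T$ (which is linear), I would get
\begin{equation*}
T\tau_i = \frac{Tv_i - \sum_{j=1}^{i-1} \langle v_i, \tau_j\rangle_H\, T\tau_j}{\|u_i\|_H}.
\end{equation*}
Using the inductive hypothesis to replace each $T\tau_j$ by $w_j$, and using orthogonality of $T$ in the two places where it matters, namely $\langle v_i, \tau_j\rangle_H = \langle Tv_i, T\tau_j\rangle_H = \langle Tv_i, w_j\rangle_H$ and $\|u_i\|_H = \|Tu_i\|_H$, the numerator becomes $Tv_i - \sum_{j<i}\langle Tv_i, w_j\rangle_H w_j$ and the denominator is its norm. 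That is precisely the Gram--Schmidt formula for $w_i$ applied to the list $Tv_1,\ldots, Tv_d$, so $T\tau_i = w_i$, closing the induction.

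There is essentially no obstacle here: the whole content is that the two ingredients of Gram--Schmidt (projection onto previously constructed unit vectors, and normalization) both commute with any inner-product-preserving linear map, and the induction packages this cleanly. The only thing to be careful about is to preserve whatever sign/normalization convention is implicit in the statement of Gram--Schmidt, which is automatic here because both outputs use the same convention of dividing by the Euclidean norm of the orthogonal residual.
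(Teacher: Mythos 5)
Your proposal is correct and takes essentially the same route as the paper: the paper simply notes that an orthogonal $T$ preserves norms and commutes with the projection operator, and leaves the induction implicit, while you spell out that induction explicitly. There is no substantive difference.
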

\begin{proof}
The proof follows by just observing that for each $v\in \mathcal{V}$, $\|Tv\|_H = \|v\|_H,$ and for each $u\in V$, we have $T\left(Proj_v u\right) = Proj_{Tv}(Tu)$, where $Proj_v u = v\<u,v\>_H/\|v\|_H.$
\end{proof}

\begin{theorem}\label{mainTHPAPER}
Let $r$ be the stochastic PDE (\ref{spde1}) satisfying Assumptions (A1-A2). Assume the existence of a factor representation satisfying Assumption (A2) and \textbf{(D1, D2, D3, D4, Q1, Q2, Q3, Q4, Q5)}. For a given $h\in dom~(A)$, let $\mathcal{V}^h_t = \phi_t+ V; 0\le t\le T$ be the minimal foliation generated by $V$ such that $r_0 = h\in \mathcal{V}^h_0$ and we set

$$\widehat{\mathcal{N}}:=~span~\Big\{ (\hat{\mathcal{L}}\hat{\varphi})_{\hat{p}+1}, \ldots, (\hat{\mathcal{L}}\hat{\varphi})_{\hat{d}}   \Big\}\quad \widehat{\mathcal{Q}}:=~span~\Big\{ (\hat{\mathcal{L}}\hat{\varphi})_{1}, \ldots, (\hat{\mathcal{L}}\hat{\varphi})_{\hat{p}}   \Big\}.$$
Then, $V = \mathcal{Q}\oplus \mathcal{N}~a.s$ and

$$\max\{d(\widehat{\mathcal{N}}, \mathcal{N}),d(\widehat{\mathcal{Q}}, \mathcal{Q}) \}\stackrel{p}\to 0 ~\text{as}~n,N\rightarrow \infty.$$
Moreover,
\begin{equation}\label{finalorder}
[\hat{Z}^1]_T\ge\ldots\ge [\hat{Z}^{\hat{p}}]_T~a.s,~[\hat{Z}^{i}]_T\simeq 0,\quad~\hat{p}+1\le i\le \hat{d}~\text{as}~n,N\rightarrow \infty,
\end{equation}
and
$$
\sum_{j=1}^{\hat{d}}\hat{\theta}^2_j\stackrel{p}\to \|Q_T\|^2_{(2)}
$$
as $n,N\rightarrow \infty$.
\end{theorem}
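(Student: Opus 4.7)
The plan is to combine the algebraic identification furnished by Proposition \ref{mainREP} with the three consistency results already established: the quadratic variation estimator (Proposition \ref{conv-qua-var}), the loading factor estimator (Proposition \ref{conv-norm}), and the kernel/subspace perturbation lemma (Lemma \ref{conv-esp-D}). Since $\hat d$ and $\hat p$ are integer-valued consistent estimators (Lemma \ref{pestimation} and the Appendix), I would work on events of probability tending to one where $\hat d=d$ and $\hat p=p$, and recall that the process $Y=\mathcal{A}Z\in\mathcal{X}^d$ constructed before the theorem is almost surely nonsingular by Lemma \ref{Aoperator} and satisfies Assumptions \ref{A1}-\ref{A2} by Remark \ref{invprop}. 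Applying Proposition \ref{mainREP} with $\eta=\lambda$ and $\varphi=\xi=(\mathcal{A}^{-1})^\top\lambda$ then immediately gives the splitting
$$V=\mathcal{Q}\oplus\mathcal{N}, \quad \mathcal{Q}=\mathrm{span}\{(\mathcal{L}\xi)_1,\dots,(\mathcal{L}\xi)_p\}, \quad \mathcal{N}=\mathrm{span}\{(\mathcal{L}\xi)_{p+1},\dots,(\mathcal{L}\xi)_d\}~\text{a.s.}$$
where $\mathcal{L}$ is the orthogonal eigenvector matrix of the deterministic-rank $p$ matrix $[Y]_T$.

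For the subspace convergence, I would feed Proposition \ref{conv-qua-var} into Lemma \ref{conv-esp-D}, applied with $C_n=\widehat{[Y]}_T$, $C=[Y]_T$ and $q=d-p$. This yields $D(\mathrm{Ker}\,\widehat{[Y]}_T,\mathrm{Ker}\,[Y]_T)\xrightarrow{p}0$; equivalently, in the notation of the theorem, the span of the last $d-p$ rows of $\hat{\mathcal{L}}$ converges to the span of the last $d-p$ rows of $\mathcal{L}$ (and, by taking orthogonal complements in $\mathbb{R}^d$, the corresponding statement holds for the first $p$ rows). Using Proposition \ref{conv-norm} to replace $\hat\varphi_j$ by $\xi_j$ in $E$ and Lemma \ref{Gram-Schmidt} to commute the orthogonal rotation with Gram-Schmidt orthonormalization, I would transfer this convergence from $\mathbb{R}^d$ to the finite-dimensional subspaces $\widehat{\mathcal{Q}},\widehat{\mathcal{N}}\subset V\subset E$, obtaining
$$\max\{d(\widehat{\mathcal{Q}},\mathcal{Q}),d(\widehat{\mathcal{N}},\mathcal{N})\}\xrightarrow{p}0.$$

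Finally, the ordering $[\hat Z^1]_T\ge\cdots\ge[\hat Z^{\hat p}]_T$ a.s.\ is immediate from $[\hat Z^j]_T=\hat\theta_j$ together with the definition of $\hat{\mathcal{L}}$, while $[\hat Z^i]_T\simeq 0$ for $i>\hat p$ follows from Weyl's perturbation inequality applied to $\widehat{[Y]}_T\xrightarrow{p}[Y]_T$, since the last $d-p$ eigenvalues of $[Y]_T$ vanish. For the Hilbert-Schmidt norm statement, Proposition \ref{conv-qua-var} yields $\sum_{j=1}^{\hat d}\hat\theta_j^2=\|\widehat{[Y]}_T\|_{(2)}^2\xrightarrow{p}\|[Y]_T\|_{(2)}^2$, and the identity $\|[Y]_T\|_{(2)}^2=\|Q_T\|_{(2)}^2$ can be verified directly by substituting the representation $Y=\mathcal{A}Z$ together with the explicit form (\ref{ss1}) of $Q_T$ and the normalization of $\mathcal{A}$ coming from Lemma \ref{Gmatrix}. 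The main obstacle I expect is the transfer step in the middle paragraph: because eigenvectors of $\widehat{[Y]}_T$ are only defined up to rotations within eigenspaces of possibly repeated eigenvalues, no pointwise convergence of $\hat{\mathcal{L}}$ to $\mathcal{L}$ can be expected, so the whole argument must be carried out at the level of spans and the rotation-invariant metric $D$, with Lemma \ref{Gram-Schmidt} serving as the crucial bridge that makes the resulting estimator $\hat{\mathcal{L}}\hat\varphi$ converge as a subspace despite this indeterminacy.
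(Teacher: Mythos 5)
Your proposal is correct and follows essentially the same route as the paper: it introduces the intermediate space $\widetilde{\mathcal N}=\mathrm{span}\{(\hat{\mathcal L}\xi)_{\hat p+1},\dots,(\hat{\mathcal L}\xi)_{\hat d}\}$ implicitly (by first establishing convergence of the kernel spans via Proposition~\ref{conv-qua-var} feeding Lemma~\ref{conv-esp-D}, then bridging to $\widehat{\mathcal N}$ via Proposition~\ref{conv-norm} and Lemma~\ref{Gram-Schmidt}), exactly mirroring the triangle-inequality decomposition $d(\widehat{\mathcal N},\mathcal N)\le d(\widehat{\mathcal N},\widetilde{\mathcal N})+d(\widetilde{\mathcal N},\mathcal N)$ used in the paper. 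You also correctly flag the eigenvector non-identifiability issue and explain why the argument must be carried out at the level of spans with the rotation-invariant metric, which is precisely why the paper also works with $\widetilde{\mathcal N}$ rather than attempting pointwise eigenvector convergence; the remaining steps (ordering via eigenvalue continuity, Hilbert-Schmidt norm via $\|[Y]_T\|_{(2)}=\|Q_T\|_{(2)}$ and Proposition~\ref{conv-qua-var}) match the paper as well.
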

\begin{proof}
From Assumptions (A1-A2), we shall fix a pair $(Z,\lambda)$ which realizes

$$r_t(x) = \phi_t(x) + \sum_{j=1}^dZ^j_t\lambda_j(x)=\phi_t(x) + \sum_{j=1}^d Y^j_t\xi_j(x);~0\le t\le T$$
where $V=\text{span} \{\lambda_1, \ldots,\lambda_d\} = \text{span} \{\xi_1, \ldots, \xi_d\}$, $Z$ is a continuous semimartingale satisfying Assumption (A2) and \textbf{(D1, D2, D3, D4)}, \textbf{(Q1, Q2, Q3, Q4, Q5)}. Here, we set $Y=\mathcal{A}Z$ and $\xi(x)=(\mathcal{A}^{-1})^\top\lambda(x)$, where $\mathcal{A}$ is given by (\ref{Aoperator}). From Remark \ref{invprop}, $Y$ satisfies Assumption $(A2)$ as well.

To shorten notation, we abbreviate Gram-Schmidt orthonormalization by GSO. Let

$$\widetilde{\mathcal{N}} = span\{(\widehat{\mathcal{L}}\xi)_{\widehat{p}+1},\ldots,(\widehat{\mathcal{L}}\xi)_{\widehat{d}}\},$$
and
$$\widetilde{\mathcal{Q}} = \text{span}\{(\widehat{\mathcal{L}}\xi)_{1},\ldots,(\widehat{\mathcal{L}}\xi)_{\widehat{p}}\}.$$

Following the same lines as in the proof of Theorem \ref{WDTh} and noting that (see Remark~\ref{invprop})
$$\Phi(\widetilde{\mathcal{N}}) = Ker([\widehat{Y}]_T)\quad\hbox{and}\quad \Phi(\mathcal{N}) = Ker([{Y}]_T),$$
we obtain
\begin{equation}\label{dist1}
d(\widetilde{\mathcal{N}},\mathcal{N})\stackrel{p}{\longrightarrow} 0,\quad\hbox{and}\quad d(\widetilde{\mathcal{Q}},\mathcal{Q})\stackrel{p}{\longrightarrow} 0,
\end{equation}
as $n, N\rightarrow \infty$.
By using the triangle inequality, we obtain
$$d(\widehat{\mathcal{N}},\mathcal{N}) \leq d(\widehat{\mathcal{N}},\widetilde{\mathcal{N}}) + d(\widetilde{\mathcal{N}},\mathcal{N}),$$
and from equation \eqref{dist1}, it is enough to prove that $d(\widehat{\mathcal{N}},\widetilde{\mathcal{N}})\stackrel{p}{\longrightarrow} 0.$ as $n, N\rightarrow \infty$.

Let $\{\widehat{\varphi}_1,\ldots,\widehat{\varphi}_{\widehat{d}}\}$ and $\{\xi_1,\ldots,\xi_d\}$ be as in Proposition \ref{conv-norm}. Let $n$ and $N$ be large enough so that $\widehat{p}=p$ and $\widehat{d}=d$. Let $\{\tau_1,\ldots,\tau_d\}$ and $\{\widehat{\tau}_1,\ldots,\widehat{\tau}_d\}$ be the GSO of $\{\xi_1,\ldots,\xi_d\}$ and $\{\widehat{\varphi}_1,\ldots,\widehat{\varphi}_d\}$, respectively. Lemma \ref{Gram-Schmidt} allows us to state that $\{\widehat{\mathcal{L}}\tau_1,\ldots, \widehat{\mathcal{L}}\tau_d\}$ is the GSO of $\{\widehat{\mathcal{L}}\xi_1,\ldots, \widehat{\mathcal{L}}\xi_d\}$ and $\{\widehat{\mathcal{L}}\widehat{\tau}_1,\ldots,\widehat{\mathcal{L}}\widehat{\tau}_d\}$ is the GSO of $\{\widehat{\mathcal{L}}\widehat{\varphi}_1,\ldots, \widehat{\mathcal{L}}\widehat{\varphi}_d\}$.

From the orthonormalization procedure, for each $k\leq d$, we have
$$span\{\widehat{\mathcal{L}}\widehat{\varphi}_1,\ldots, \widehat{\mathcal{L}}\widehat{\varphi}_k\} = span\{\widehat{\mathcal{L}}\widehat{\tau}_1,\ldots,\widehat{\mathcal{L}}\widehat{\tau}_k\}$$
and
$$span\{\widehat{\mathcal{L}}\xi_1,\ldots, \widehat{\mathcal{L}}\xi_k\} = span\{\widehat{\mathcal{L}}\tau_1,\ldots, \widehat{\mathcal{L}}\tau_k\}.$$
Thus,
$$\widehat{\mathcal{N}} = span\{\widehat{\mathcal{L}}\widehat{\tau}_1,\ldots,\widehat{\mathcal{L}}\widehat{\tau}_p\}\quad\hbox{and}\quad \mathcal{N} = span\{\widehat{\mathcal{L}}{\tau}_1,\ldots,\widehat{\mathcal{L}}{\tau}_p\}.$$
Therefore, since $\Phi$ is an isometry, we have
$$d(\widehat{\mathcal{N}},\widetilde{\mathcal{N}}) = D(\widehat{\mathcal{N}},\widetilde{\mathcal{N}}) = 1 - \frac{1}{d}\sum_{i,j} (\<(\widehat{\mathcal{L}}\hat{\tau})_i,(\widehat{\mathcal{L}}\tau)_j\>)^2~a.s.$$

Let us work with the quantity inside the brackets, and let us introduce some notation: Denote the matrix of $\hat{\mathcal{L}}$ by $\{\hat{a}_{ij}\}$, i.e., for any vector $v\in\mathbb{R}^d$,

 $$(\widehat{\mathcal{L}}v)_i := \sum_j \hat{a}_{ij}v_j.$$

Note that since the transformation $\widehat{\mathcal{L}}$ is orthogonal, we have
 $$\sum_k \hat{a}_{ik} \hat{a}_{jk} = \delta_{ij}~a.s.$$

Observe that from Proposition \ref{conv-norm}, we have that $\<\hat{\tau}_i,\tau_j\> \stackrel{p}{\to} \delta_{ij}$ as $n, N\rightarrow \infty$.
Since $\widehat{\mathcal{L}}$ is orthogonal and the set of orthogonal matrices
is compact, the set $\{\hat{a}_{i,j}\}$ is uniformly bounded in $n$ and $N$, so that
$$\left| \sum_{k\neq p} \hat{a}_{ik}\hat{a}_{jp} \<\hat{\tau}_k,\tau_p\>\right| \stackrel{p}{\to} 0,$$
and
$$\left| \sum_{k} \hat{a}_{ik}\hat{a}_{jk} (\<\hat{\tau}_k,\tau_k\>-1)\right| \stackrel{p}{\to} 0$$
as $n, N\rightarrow \infty$.

Therefore,
 \begin{eqnarray*}
  \left| \sum_{k,p} \hat{a}_{ik}\hat{a}_{jp} \<\hat{\tau}_k,\tau_p\> - \delta_{ij}\right| &=& \left| \sum_{k,p} \hat{a}_{ik}\hat{a}_{jp} \<\hat{\tau}_k,\tau_p\> - \sum_{k} \hat{a}_{ik}\hat{a}_{jk}\right|\\
  &\leq& \left| \sum_{k} \hat{a}_{ik}\hat{a}_{jk} (\<\hat{\tau}_k,\tau_k\>-1)\right| + \left| \sum_{k\neq p} \hat{a}_{ik}\hat{a}_{jp} \<\hat{\tau}_k,\tau_p\>\right|\\
  &\stackrel{p}{\to}& 0,
 \end{eqnarray*}
as $n,N\rightarrow \infty$, which thus implies that

$$\frac{1}{d}\sum_{i,j} (\<(\widehat{\mathcal{L}}\widehat{\tau})_i,(\widehat{\mathcal{L}}\tau)_j\>)^2=\frac{1}{d}\sum_{i,j}\left(\sum_{k,p} \hat{a}_{ik}\hat{a}_{jp} \<\hat{\tau}_k,\tau_p\>\right)^2 \stackrel{p}{\to} 1,$$
and then,
$$d(\widehat{\mathcal{N}},\widetilde{\mathcal{N}}) \stackrel{p}{\longrightarrow} 0$$
as $n, N \rightarrow \infty$.
The proof for the statement $\lim_{n,N\rightarrow \infty}d(\widehat{\mathcal{Q}},{\mathcal{Q}}) =0$ in probability follows from the same reasoning as in the proof of Theorem \ref{WDTh}. Let us now check the ordering (\ref{finalorder}). Let $\hat{\theta}_1\ge 1\ldots\ge \hat{\theta}_{\hat{d}}~a.s$ be the eigenvalues of the self-adjoint non-negative matrix $\widehat{[Y]}_T$ arranged in decreasing order a.s. By the very definition,

$$\hat{\theta}_i = [\hat{Z}^i]_T;1\le i\le \hat{d}~a.s$$
Moreover, the eigenvalues are continuous functions of the entries of the matrix and $\hat{p} = p$ and $\hat{d}=d$ for $n,N$ large enough. Then,

$$ \max_{\substack{\hat{p}+1\le i\le \hat{d}}}\hat{\theta}_i\stackrel{p}{\to}0$$
as $n.N\rightarrow \infty$. This shows that (\ref{finalorder}) holds. Lastly, by the very definition, the matrix of the random operator $Q_T:V\rightarrow V$ computed along the basis $\{\xi_1, \ldots, \xi_d\}$ is given by $\{[Y^i,Y^j]_T; 1\le i,j\le d\}\in \mathbb{M}_{d\times d}$. Therefore, $\|Q_T\|^2_{(2)} = \|[Y]_T\|^2_{(2)}~a.s$. Proposition \ref{conv-qua-var} yields

$$\|\widehat{[Y]}_T\|^2_{(2)} = \sum_{j=1}^{\hat{d}}\hat{\theta}^2_j = \sum_{j=1}^{\hat{d}}[\hat{Z}^j]^2_T\stackrel{p}{\to} \|Q_T\|^2_{(2)}$$
as $n,N\rightarrow \infty$. This concludes the proof.
\end{proof}

\section{Simulation Studies and Applications}\label{numerics}
In this section, we present some numerical results to illustrate the methodology developed in this article.

\subsection{Semimartingale PCA}
In this section, we illustrate the estimation of the factor spaces $(\mathcal{W},\mathcal{D})$ based on a finite-dimensional semimartingale system sampled in high-frequency. In particular, the goal is to illustrate Proposition \ref{ranking}. In the simulation below, we assume that one observes a 4-dimensional semimartingale as follows: We consider a Markov diffusion

$$dM_t = \mu(M_t)dt + \sigma(M_t)dB_t$$
driven by a 3-dimensional Brownian motion $B = (B^1, B^2, B^3)$ and the vector fields $\mu:\mathbb{R}^4\rightarrow \mathbb{R}^4$ and $\sigma:\mathbb{R}^4\rightarrow \mathbb{M}_{4\times 3}$ are given by $\mu(x_1, \ldots, x_4) = (x_2, -2x_1+x_3, x_4,-x_1)$ and

$$
\sigma(x_1, \ldots,x_4)=\left(
  \begin{array}{ccc}
    1 & 0 & x_2 \\
    0 & 1 & 0 \\
    0 & 0 & x_2 \\
    0 & 0 & x_2 \\
  \end{array}
\right)
$$
One can easily check $\mathcal{W} =\{ B^1, B^2, \int M^2dB^3\}$ and since $M$ is a truly 4-dimensional semimartingale, then $\mathcal{M} = \mathcal{W}\oplus \mathcal{D}$ where $dim~\mathcal{D}=1$. The observation times are taken to be equidistant: $t^n_k=\frac{2\pi}{n-1}k; k=0, \ldots, n-1$ where the total number of observations is $n=2000$. The estimated factors in Figure \ref{fig1} are ranked in terms quadratic variation (see Theorem \ref{WDTh}) and we clearly observe that $\hat{J}^4$ identifies a null quadratic variation factor which generates $\mathcal{D}$. The quadratic variation explained by the principal components are given by Table 1, where $\hat{\eta}_i=\frac{\sum_{j=1}^i\hat{\theta}_j}{\sum_{r=1}^4\hat{\theta}_r}, 1\le i \le 4$ and $\hat{\theta}_i$ is the $i$-th estimated eigenvalue related to $i$-th estimated principal component $\hat{J}^i$.

\begin{table}[htb]
\caption{ Quadratic variation explained by the principal components}
\begin{center}
\begin{tabular}{|c|c|c|c|}
  \hline
  $\hat{\eta}_1$ &$\hat{\eta}_2$  & $\hat{\eta}_3$ & $\hat{\eta_4}$ \\
  \hline
  $0.7523$ & $0.9021$ & $0.9996$ & $1.0000$\\
  \hline
\end{tabular}
\end{center}
\end{table}
\noindent

\begin{figure}[htb]
\begin{center}
\includegraphics[scale=0.55]{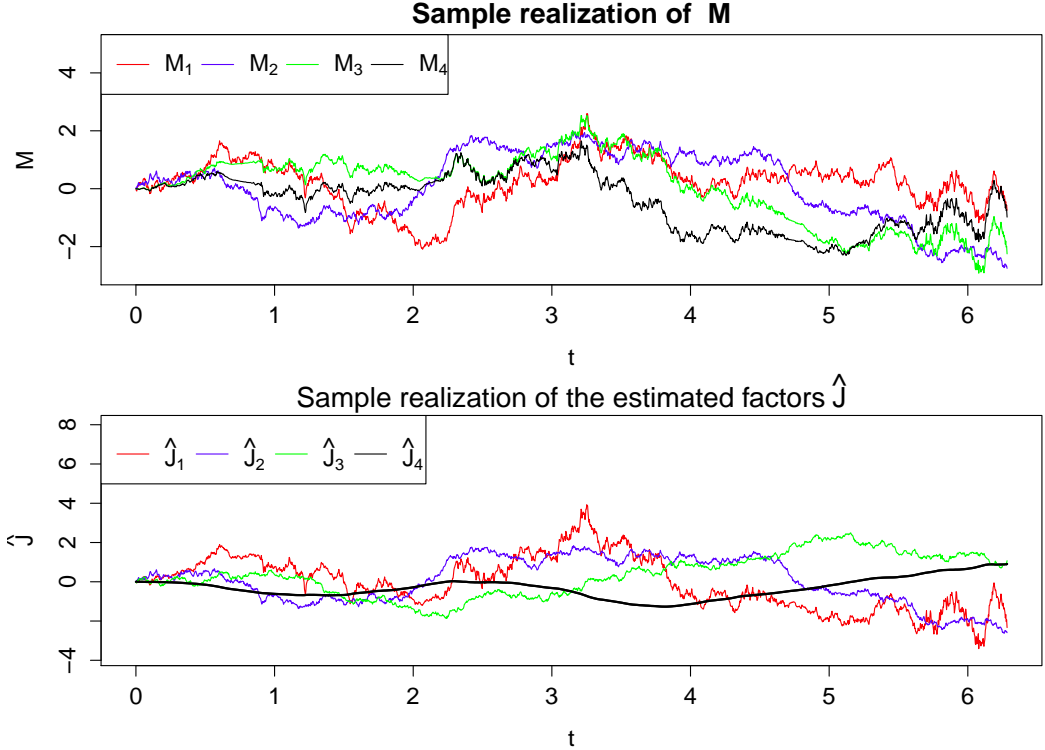}
\end{center}
\caption{Estimation of a basis for~$\mathcal{W}$ and $\mathcal{D}$}
\label{fig1}
\end{figure}

\subsection{Variance versus quadratic variation}
In this section, the goal is to illustrate that any naive attempt to implement standard factor models towards dimension reduction in term of quadratic variation is hopeless. For this purpose, we consider two very simple space-time two-dimensional semimartingales driven by a single Brownian motion $B$.

$$X_t = B_t\lambda_1(x)  + \big(sin(15 t) - B_t\big)\lambda_2(x)$$
$$U_t =  B_t\lambda_1(x)  + \big(sin(3 t) - B_t\big)\lambda_2(x)$$
where $B$ is a one-dimensional Brownian motion, $\lambda_1(x) = x\text{cos}(x)$ and $\lambda_2(x) = \text{cos}(x) - x \text{sin}(x); 0 \le x\le 5, 0\le t\le 2\pi$. In the sequel, the drift components are denoted by $\Gamma^1_t = \text{sin}(15t)$, $\Gamma^2_t = \text{sin}~(3t)$ and we set $H^1=(B,\Gamma^1-B)$ and $H^2 = (B,\Gamma^2-B)$. Let
$\mathcal{M}(H^1)$ and $\mathcal{M}(H^2)$ be the dynamic spaces generated by $H^1$ and $H^2$, respectively. We clearly have

$$\mathcal{M}(H^1) = \text{span}\{B\}\oplus \text{span}~\{\Gamma^1\}, \quad\mathcal{M}(H^2) = \text{span}~\{B\}\oplus \text{span}~\{\Gamma^2\}$$
Here, in the time variable, the observation times are taken to be equidistant: $t^k_n=\frac{2\pi}{n-1}k; k=0, \ldots, n-1$ where the total number of observations is $n=2000$. In the spatial variable, the observation times are taken to be equidistant: $x^n_k=\frac{5}{n-1}k; k=0, \ldots, n-1$ where the total number of observations is $n=31$. The estimated pair of factors provided by the high-frequency factor model based on variance will be denoted by $(\widehat{Y}^1,\widehat{Y}^2)$. Here, $\widehat{Y}^1$ is the estimator of the leading factor component in terms variance. The estimated pair of factors provided by the high-frequency factor model based on quadratic variation will be denoted by $(\widehat{Z}^1,\widehat{Z}^2)$. Here, $\widehat{Z}^1$ is the leading factor component in terms of quadratic variation (see (\ref{PCAest})).

In Figure \ref{fig33}, we clearly see that the factor analysis based on second moments is not able to identify $(B,\Gamma^1)$. The leading component estimated factor $\widehat{Y}^1$ resembles a bounded variation process with large variance and the second estimated factor $\widehat{Y}^2$ is essentially the first one distorted by the Brownian paths in such way that the true pair $(B,\Gamma^1)$ is by no means identified. In strong contrast, Figure \ref{fig33} clearly reports that the estimated pair $(\widehat{Z}^1, \widehat{Z}^2)$ identifies the pair $(B,\Gamma^1)$. We stress that in this two-dimensional setting, the true factors can be estimated up to multiplicative constants so that the results presented in Figures \ref{fig22} and \ref{fig33} shows a very consistent estimation of $\mathcal{M}(H^1)$ by using our methodology. More importantly, the correct splitting and ranking in term of quadratic variation is fairly estimated. This numerical example illustrates the use of factor analysis based on variance to infer volatility (quadratic variation) does not have any sound basis even in a very simple space-time semimartingale model given by $X$ above.

Figure \ref{fig44} presents the results for the model $U$. In this numerical experiment, the goal is to illustrate that null quadratic variation factors with large variance may be the leading component by using standard factor models in terms of variance. In Figure \ref{fig44}, we report that $\widehat{Y}^2$ estimates well the Brownian component $B$ responsible for the quadratic variation subspace of $U$, $\widehat{Y}^1$ estimates well the bounded variation component $\Gamma^2$ responsible for the null quadratic variation subspace of $U$. However, the correct leading component of the space-time semimartingale $U$ is the Brownian motion and not $\Gamma^2$. This simple example shows that prioritising components with large variance by using standard factor models may be completely superfluous in terms of quadratic variation. This shows that dimension reduction for semimartingale systems can not be accurately performed by using classical dimension reduction based on variance.

\begin{figure}[htb]
\begin{center}
\includegraphics[scale=0.55]{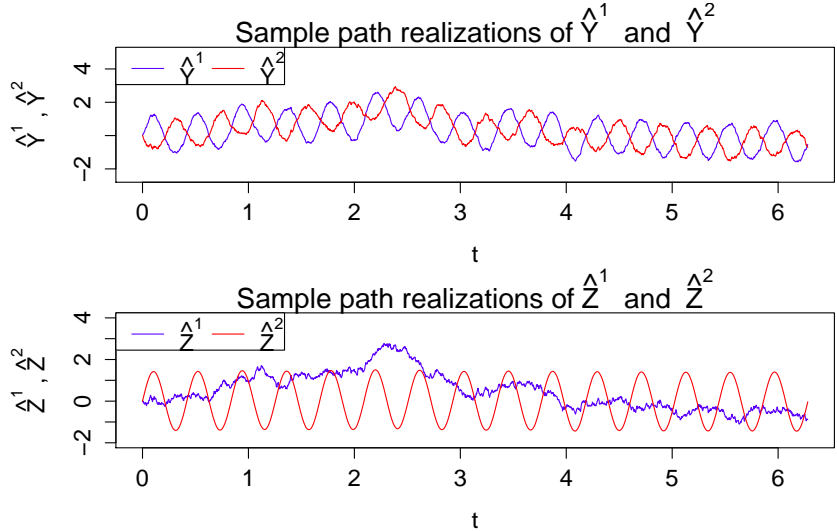}
\end{center}
\caption{Estimated factors of $X$.}
\label{fig22}
\end{figure}

\begin{figure}[htb]
\begin{center}
\includegraphics[scale=0.55]{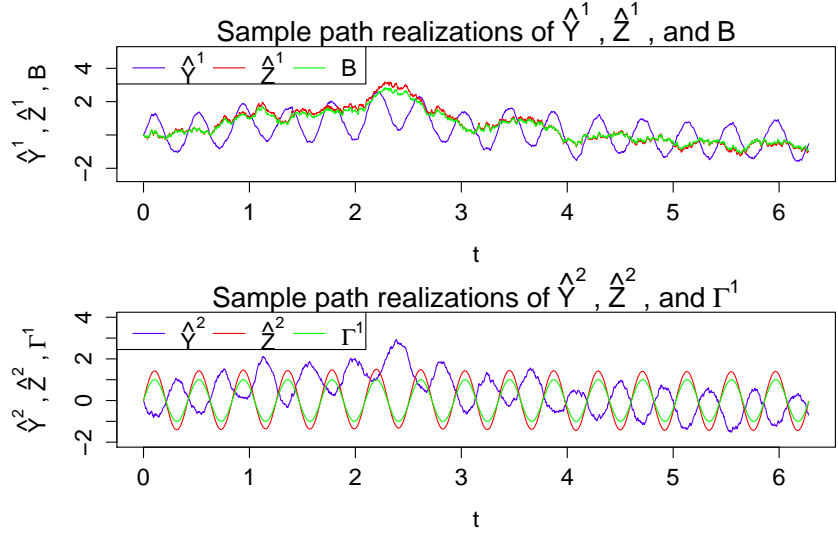}
\end{center}
\caption{Comparation of the estimated factors of Figure \ref{22}}
\label{fig33}
\end{figure}

\begin{figure}[htb]
\begin{center}
\includegraphics[scale=0.55]{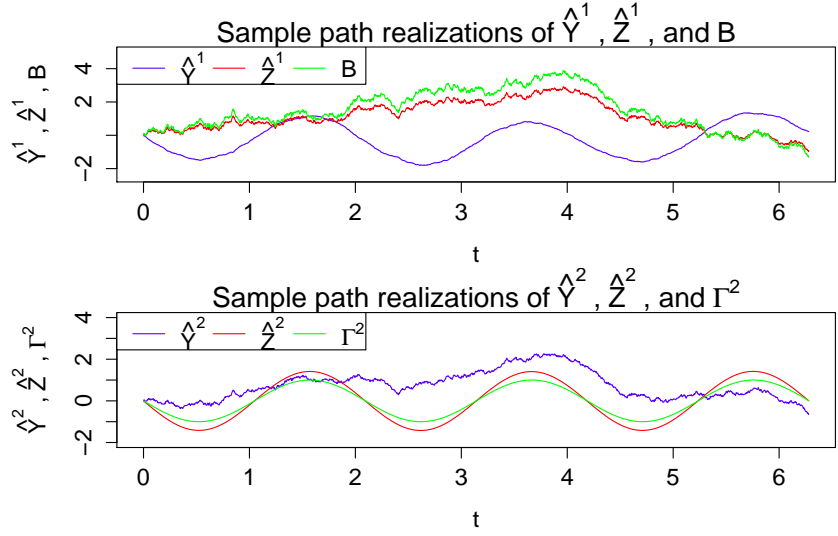}
\end{center}
\caption{Specification of the standard factor analysis for a null quadratic variation component as the leading semimartingale component in terms of variance}
\label{fig44}
\end{figure}
\subsection{Estimating finite-dimensional realizations from a SPDE}

Here, we illustrate our methodology with some applications to space-time semimartingale models. The first example is based on a Markov diffusion

$$dM_t = \mu(M_t)dt + \sigma(M_t)dB_t$$
driven by a 3-dimensional Brownian motion $B = (B^1, B^2, B^3)$ and the vector fields $\mu:\mathbb{R}^4\rightarrow \mathbb{R}^4$ and $\sigma:\mathbb{R}^4\rightarrow \mathbb{M}_{4\times 3}$ are given by $\mu(x_1, \ldots, x_4) = (x_2,-2x_1+x_3,x_4,-x_1)$ and

$$
\sigma(x_1, \ldots,x_4)=\left(
  \begin{array}{ccc}
    1 & 0 & 0 \\
    0 & x_2 & 0 \\
    0 & 0 & x_1 \\
    0 & 0 & 0 \\
  \end{array}
\right)
$$
where,

\begin{equation}\label{stsmlast}
r_t = \sum_{i=1}^4 M_t^i \lambda_i,
\end{equation}
and $\lambda_1 = x\cos (x), \lambda_2 = \cos(x)-x\sin(x), \lambda_3(x) = -2\sin(x)-x\cos(x)$, and $\lambda_4(x) = x\sin(x)-3\cos(x)$. In this case, $\mathcal{W} = \text{span}~\{M^1,M^2,M^3\}$, $\mathcal{D} = \text{span}~\{M^4\}$, $\mathcal{Q} = \text{span}~\{\lambda_1, \lambda_2,\lambda_3\}$ and $\mathcal{N} = \text{span}~\{\lambda_4\}$. Figure \ref{fig5} shows the estimated factors of equation (\ref{stsmlast}) by using the variance-based factor model $\hat{Y}$ and the PCA semimartingale $\hat{Z}$ developed in this paper. Clearly, the variance-based factor model is not able to identify the subspace $\mathcal{D}$ (and hence $\mathcal{N}$ as well), while the PCA semimartingale does. In addition, Table 2 presents $\hat{\lambda}_k := \sum_{j=1}^k \hat{m}_{jj}/\sum_{j=1}^{4} \hat{m}_{jj}$ where $\hat{m}_{jj}$ is the $(j,j)$-th element of the matrix $\widehat{[Y]}_T$ (see (\ref{hatquadmatrix})). Table 3 presents the variation explained by PCA semimartingale  by means of $\hat{\eta}_i=\frac{\sum_{j=1}^i\hat{\theta}_j}{\sum_{r=1}^4\hat{\theta}_r}, 1\le i \le 4$ where $\hat{\theta}_i$ is the $i$-th estimated eigenvalue related to $i$-th estimated principal component $\hat{Z}^i$. One clearly see the use of PCA semimartigale is more efficient than the variance-based factor model in identifying quadratic variation dimension.

\begin{figure}[htb]
\begin{center}
\includegraphics[scale=0.7]{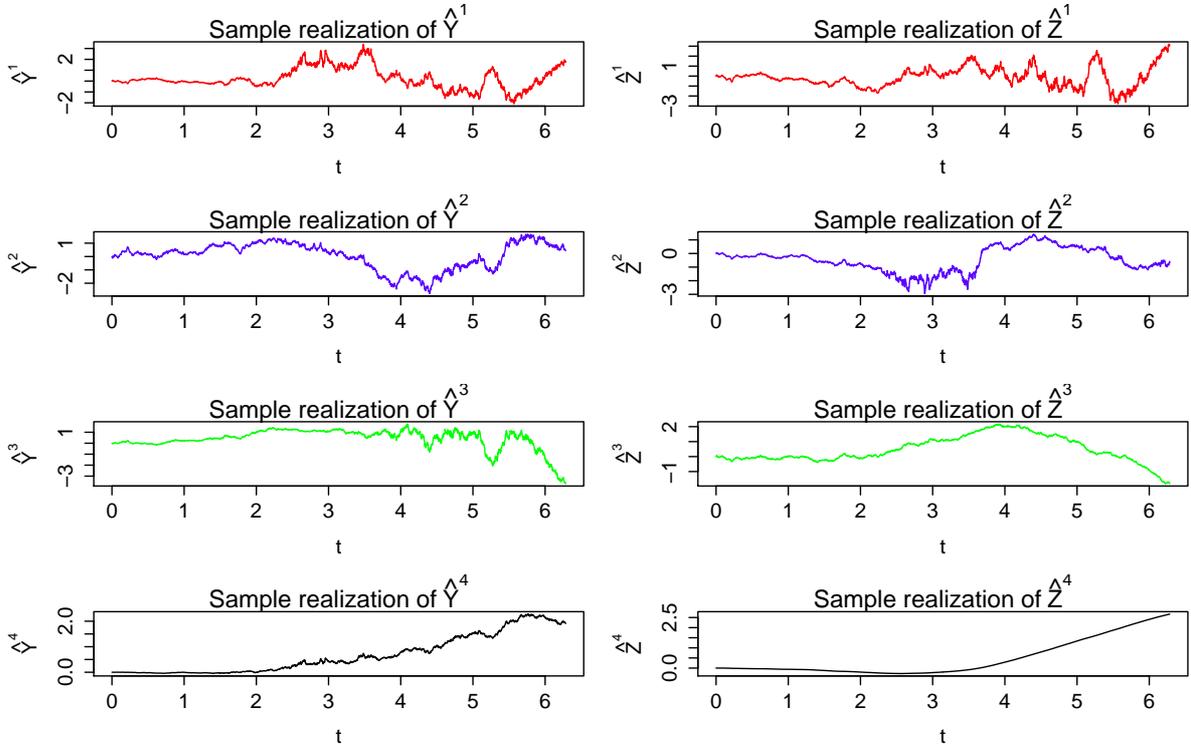}
\end{center}
\caption{Estimated factors for the space-time semimartingale~(\ref{stsmlast})}
\label{fig5}
\end{figure}

\begin{table}[htb]
\caption{Quadratic variation explained by the principal components: Variance-based factor model}
\begin{center}
\begin{tabular}{|c|c|c|c|}
  \hline
  $\hat{\lambda}_1$ &$\hat{\lambda}_2$  & $\hat{\lambda}_3$ & $\hat{\lambda}_4$\\
  \hline
  $0.4795 $ & $0.6912$ & $0.9886$ & $1.0000$\\
  \hline
\end{tabular}
\end{center}
\end{table}

\begin{table}[htb]
\caption{Quadratic variation explained by the principal components: PCA semimartingale}
\begin{center}
\begin{tabular}{|c|c|c|c|}
  \hline
  $\hat{\eta}_1$ &$\hat{\eta}_2$  & $\hat{\eta}_3$ & $\hat{\eta}_4$\\
  \hline
  $0.7805$ & $0.9794$ & $0.9998$ & $1.0000$\\
  \hline
\end{tabular}
\end{center}
\end{table}

\

\noindent \textbf{Dynamic distance between manifolds}
Let us now investigate the robustness of our methodology in the estimation of the minimal invariant manifold, say $V$, for a stochastic PDE. For this purpose, we consider the following objects: Let $\hat{V} = \widehat{Q}\oplus \widehat{N}$ be the estimator for $V$ based on the entire sample $\{(t_i,x_j); 0\le i\le \bar{n}, 0\le j\le \bar{N}\}$. Let $\hat{V}_{-k}$ be the same estimator but computed over the reduced sample $\{(t_i,x_j); 1=0, \ldots, \bar{n}-k, j=0, \ldots, \bar{N}\}$ where $1\le k \le \tilde{K}$ and $\tilde{K}$ is a fixed integer smaller than $\bar{n}$. To compute the distance $D$ between manifolds, we use the following approximation for the Sobolev inner product $\langle \cdot, \cdot\rangle_E$

$$\langle f, g\rangle_{a} := \sum_{i=1}^{\bar{N}}\frac{\big(  f(x_j) - f(x_{j-1}) \big)\big( g(x_j)- g(x_{j-1})\big)}{x_j - x_{j-1}}; f,g\in E.$$
See e.g Prop 1.45 in \cite{friz} for more details. Based on this approximation for $\langle \cdot, \cdot\rangle_E$, we perform the Gram Schmidt algorithm to orthonomalize $\hat{V}$ and $\hat{V}_{-k}$. We then use (\ref{metricsub}) computed in terms of $\langle \cdot, \cdot\rangle_a$. We repeat the above procedure for $k = 1,\ldots, \tilde{K}$ where $\tilde{K}$ is a prescribed integer smaller than $\bar{n}$. The idea is to compute

\begin{equation}\label{dynamicD}
d(\hat{V},\hat{V}_{-k}); k=5, 10, 15, 20, \ldots, 250.
\end{equation}
Under existence of a finite-dimensional invariant manifold, $k\mapsto d(\hat{V},\hat{V}_{-k})$ must be null as $n,N\rightarrow \infty$. In order to illustrate the invariance aspect of Theorem \ref{mainTHPAPER}, we consider the following stochastic PDE

\begin{equation}\label{ss2}
dr_t = \big(A(r_t) + \alpha_{HJM}(r_t)\big)dt + \sum_{i=1}^3\lambda_{i} dB_t^i.
\end{equation}
where the volatilities curves are $\lambda_1 = x\cos (x), \lambda_2 = \cos(x)-x\sin(x), \lambda_3(x) = -2\sin(x)-x\cos(x)$ and $\lambda_4(x) = x\sin(x)-3\cos(x)$, $r_0 = 0$, $A = \frac{d}{dx}$ is the infinitesimal generator of the right-shift semigroup $(S_t)_{t\ge 0}$ defined by the action $S_t\varphi(x):= \varphi(t+x)$. We set $\alpha=\alpha_{HJM}$ as the classical Heath-Jarrow-Morton drift (see Heath et al~\cite{heath}). One can easily check that this HJM model admits a finite-dimensional realization of the form

\begin{eqnarray*}
dZ_t^1 &=& - Z_t^2dt + dB_t^1\nonumber\\
dZ_t^2 &=& (-2Z_t^1 + Z_t^3)dt + dB_t^2 \label{sde1} \\
dZ_t^3 &=& (Z_t^4 - Z_t^1)dt + dB_t^3\nonumber \\
dZ_t^4 &=& - Z_t^1dt \nonumber
\end{eqnarray*}
and a parametrization

\begin{eqnarray*}
\phi_t(x) &=& -\frac{1}{2}\left(x\sin(x)+\cos(x)\right)^2 + \frac{1}{2}\left((x+t)\sin(x+t)+\cos(x+t)\right)^2\\
&-&\frac{1}{2}\lambda_1(x)^2 + \frac{1}{2}\lambda_1(x+t)^2-\frac{1}{2}\lambda_2(x)^2 + \frac{1}{2}\lambda_2(x+t)^2,
\end{eqnarray*}
In this case,

$$r_t = \phi_t + \sum_{i=1}^4 Z_t^i \lambda_i$$
is the strong solution of (\ref{ss2}). We compute (\ref{dynamicD}) for the model (\ref{ss2}) which shows that it fluctuates between zero and $2.5 \times 10^{-8}$ so that we prefer do not report this numerical experiment in this section. More interesting than this is to illustrate (\ref{dynamicD}) in the presence of noise. For this purpose, we consider an observational process $X_t(x) = r_t(x)+ \varepsilon_t(x)$ where $\varepsilon_t(x) = \frac{\sqrt{2}}{3}u_t \text{sin}~(\pi x)$ where $u_t$ is a standard Gaussian variable for every $t\ge 0$ such that $u_t$ is independent form $u_s$ whenever $s\neq t$. Figure \ref{fig8} illustrates that the presence of noise may lead to an erroneous analysis for the existence of a finite-dimensional invariant manifold for the stochastic PDE (\ref{ss2}). As the backward lag increases the distance between manifolds increases as well with short periods of stability.

\begin{figure}[htb]
\begin{center}
\includegraphics[scale=0.5]{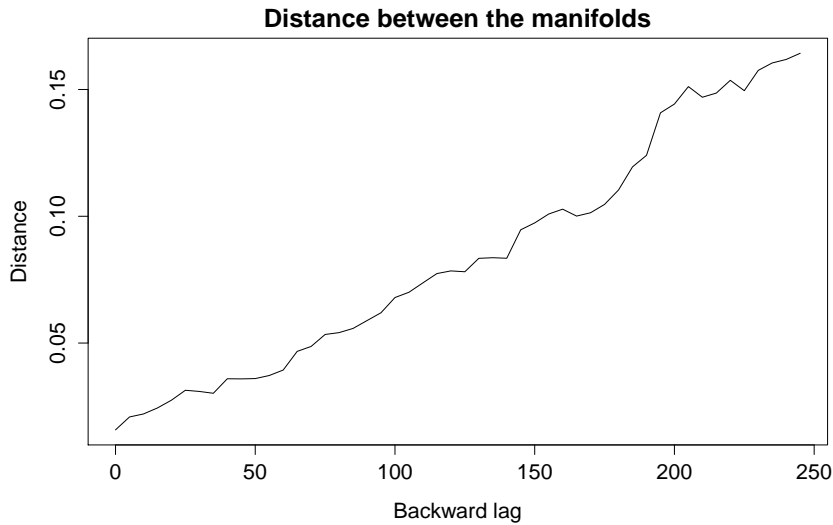}
\end{center}
\caption{Dynamic distance (\ref{dynamicD}): Finite-dimensional realization with noise}
\label{fig8}
\end{figure}

\subsection{Application to real data sets}\label{libor}
In this section, we illustrate the theoretical results of this article with an application to a real data set. We consider the UK nominal spot curve obtained by the Bank of England with maturities .5 to 25 years (50 maturities) and daily data ranging from 27 May 2005 to 9 October 2007 summing 601 observations. We postulate an affine structure in the data, for instance finite-dimensional realizations for a SPDE data generating process. The first task is to estimate the underlying dimension of the affine manifold. The penalty function used in (\ref{pc}) to estimate the number of factors is given in page 201 of \cite{bai}. Any of these penalty functions produce identical results for the estimation of the underlying dimension. The statistics $\hat{d}$ (given by (\ref{dhat}) estimates seven factors for this data. In order to estimate the dimension of the quadratic variation space $\mathcal{Q}$, we make use of the Fourier-type estimator introduced by \cite{mancino}. Under the assumptions of Proposition \ref{fest} in Appendix, we take $\epsilon = n^{-\frac{1}{3}}$ in Corollary \ref{pepsilon} in the estimation procedure. The estimation indicates $dim~\widehat{\mathcal{Q}}=6$, so that $dim~\widehat{\mathcal{D}}=1$. Figures \ref{fig6} and \ref{fig7} report the time series of the estimated factors $(\widehat{Y},\widehat{Z})$, where $\widehat{Y}$ denotes the variance-based factor estimator and $\widehat{Z}$ is given by (\ref{PCAest}).

Figure \ref{fig7} and the estimation $dim~\widehat{\mathcal{D}}=1$ strongly indicate the presence of a non-trivial drift dynamics in the data. In particular, the estimated factor with smallest variance $\widehat{Y}^7$ is not able to identify the drift while $\widehat{Z}^7$ seems to estimate a bounded variation curve subject to small errors due to observational errors or microstructure effects.

In order to compare our methodology with the standard factor model, we also perform a principal component analysis in two different versions. In Table 4, $\hat{\eta}_i=\sum_{j=1}^i\hat{\theta}_j/\sum_{r=1}^{7}\hat{\theta}_r, 1\le i \le 7$ and $\hat{\theta}_i$ is the $i$-th estimated eigenvalue related to the PCA semimartingale estimated principal component $\hat{Z}^i$ given by (\ref{PCAest}). In table 5,
$\hat{\lambda}_k := \sum_{j=1}^k \hat{m}_{jj}/\sum_{j=1}^{7} \hat{m}_{jj}$ where $\hat{m}_{jj}$ is the $(j,j)$-th element of the matrix $\widehat{[Y]}_T$ (see (\ref{hatquadmatrix})). The first PCA semimartingale component already explains 50 per cent of the total variation while only the third classical factor  approximates half of the quadratic variation contained in the data.

Figure \ref{fig9} reports the dynamic distance (\ref{dynamicD}) of the estimated manifold $\hat{V}$ over the entire period of our sample against $\hat{V}_{-k}$ where $k=5,10, 15, \ldots 200.$ As the backward lag increases, the distance increases as well but we observe some periods of stability over time.

\begin{figure}[htb]
\begin{center}
\includegraphics[scale=0.5]{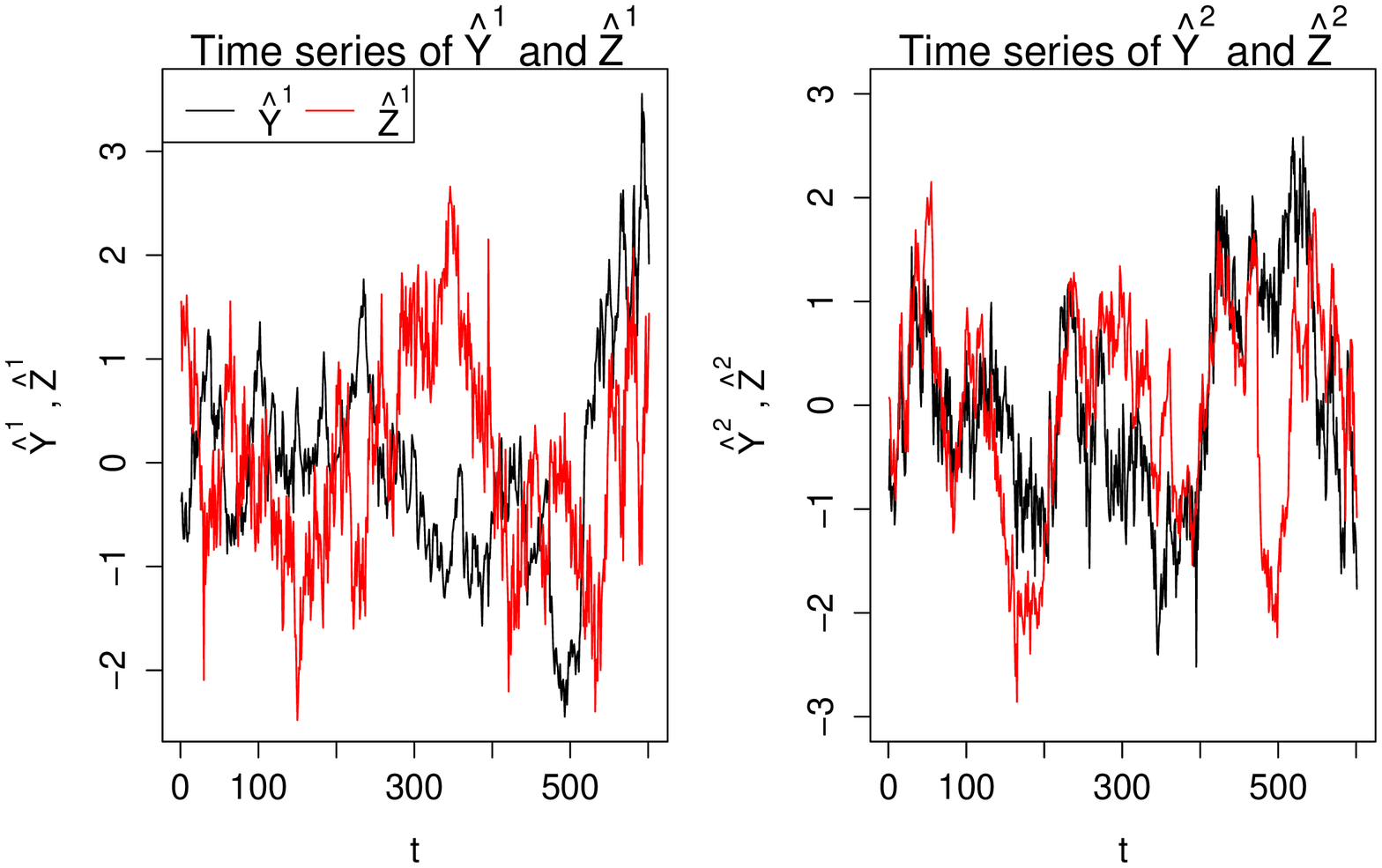}
\includegraphics[scale=0.5]{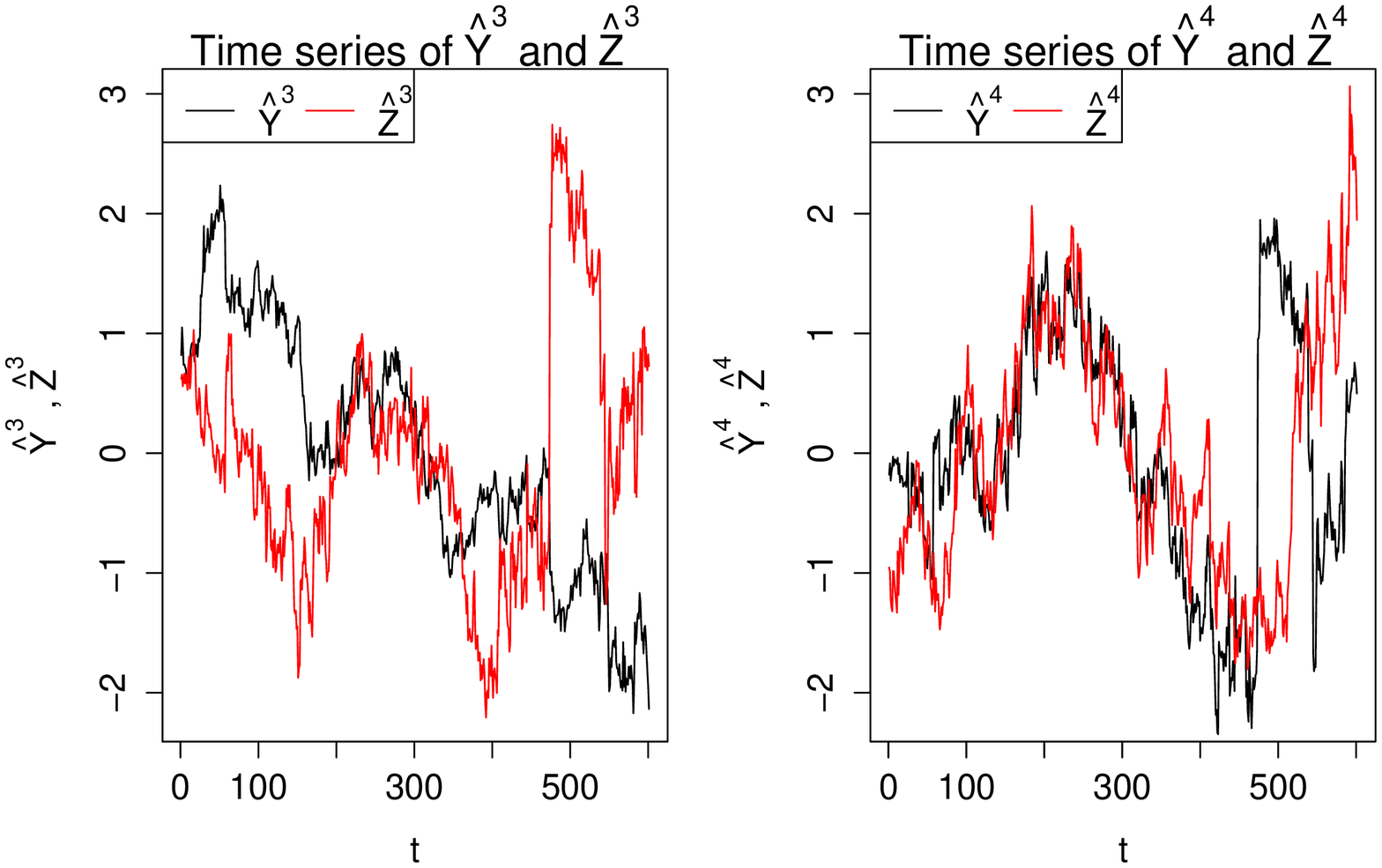}
\end{center}
\caption{Time series of the estimated factors}
\label{fig6}
\end{figure}

\begin{figure}[htb]
\begin{center}
\includegraphics[scale=0.5]{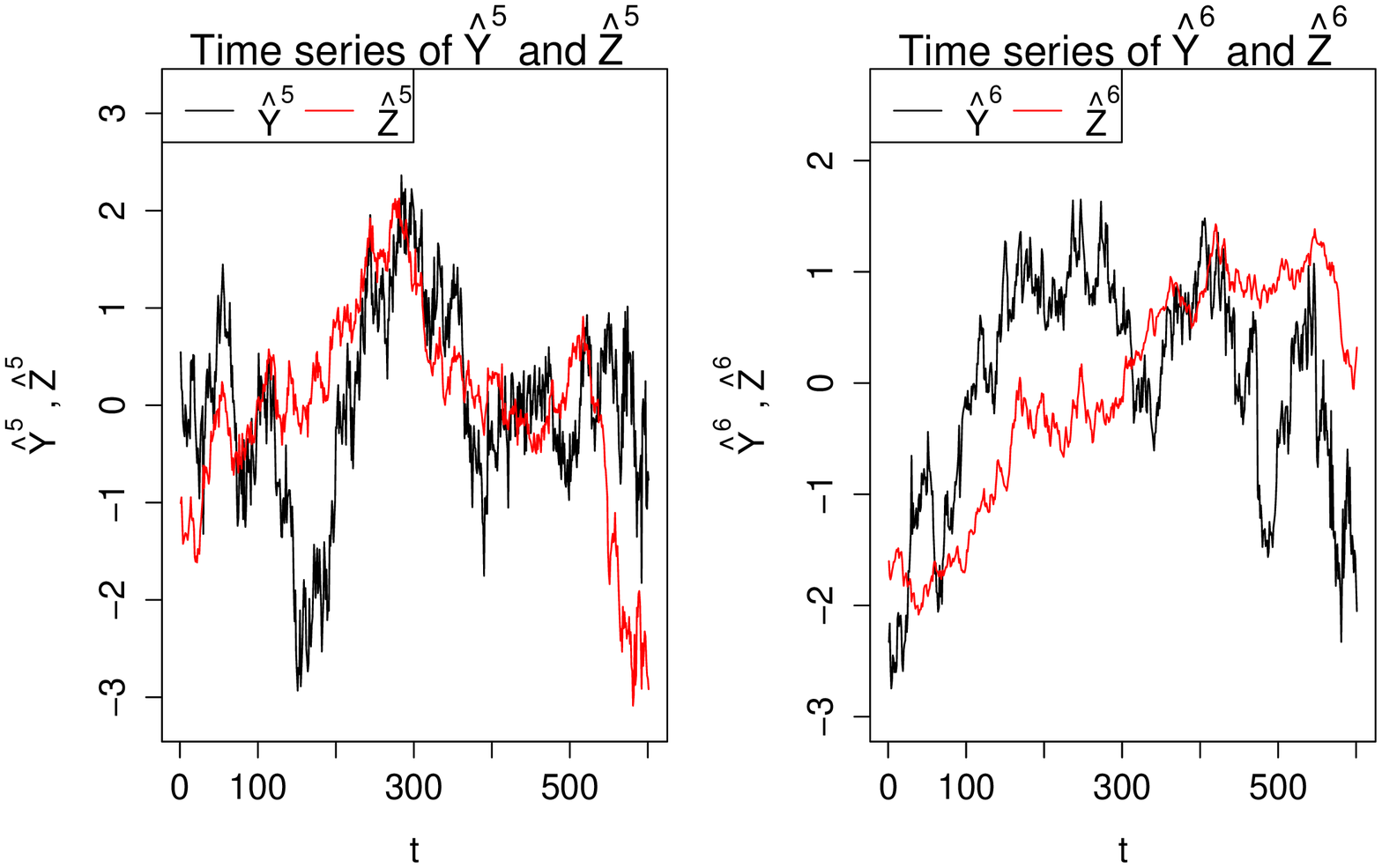}
\includegraphics[scale=0.5]{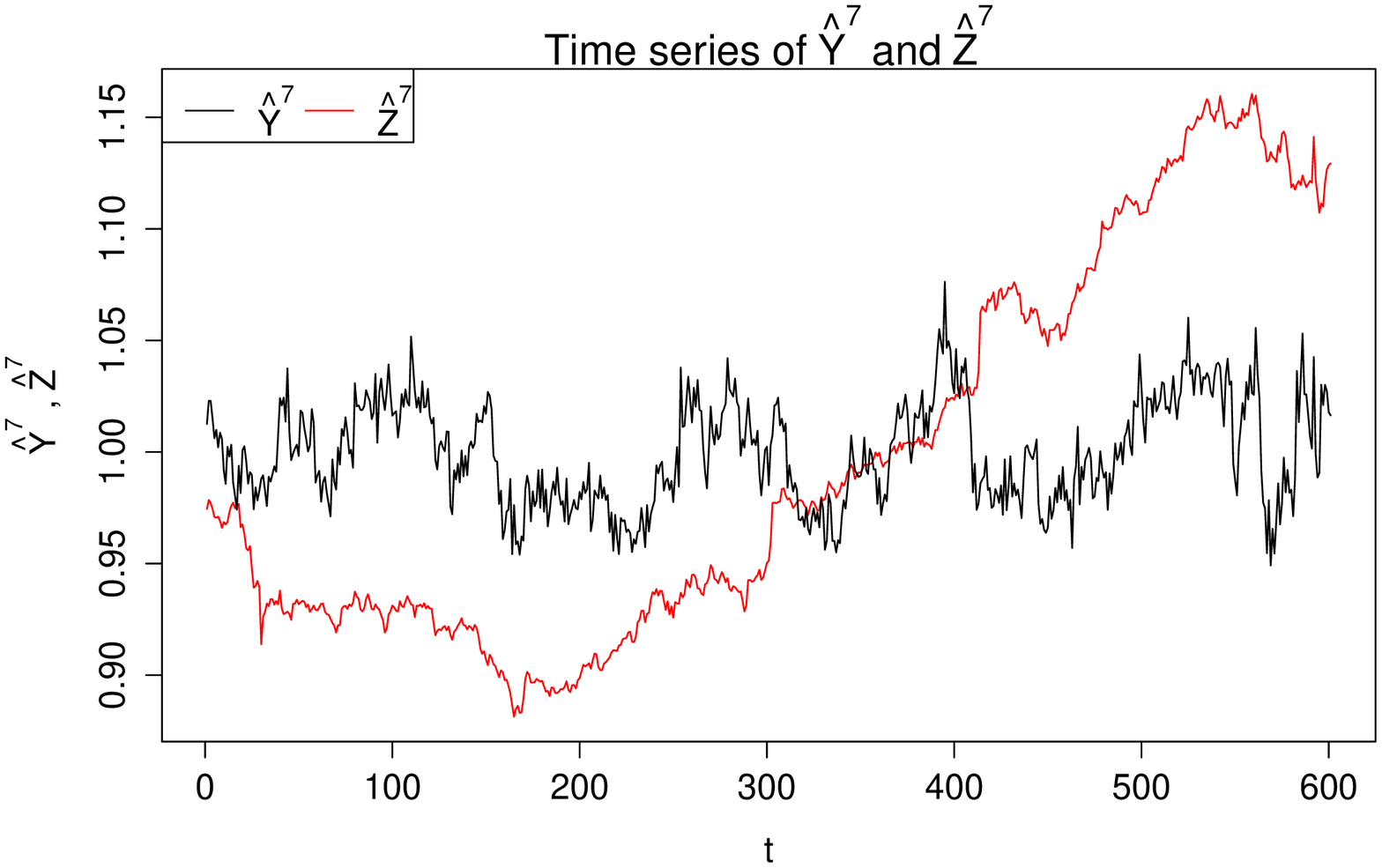}
\end{center}
\caption{Time series of the estimated factors}
\label{fig7}
\end{figure}

\begin{table}[htb]
\caption{Quadratic variation explained by the principal components: PCA semimartingale}
\begin{center}
\begin{tabular}{|c|c|c|c|c|c|c|}
  \hline
  $\hat{\eta}_1$ &$\hat{\eta}_2$  & $\hat{\eta}_3$ & $\hat{\eta_4}$& $\hat{\eta_5}$& $\hat{\eta_6}$& $\hat{\eta_7}$ \\
  \hline
  $0.5010 $ & $0.7152$ & $0.8548$ & $0.9471$& $0.9925$&$0.9999$&$1.0000$\\
  \hline
\end{tabular}
\end{center}
\end{table}

\begin{table}[htb]
\caption{Quadratic variation explained by the principal components: Variance-based factor model}
\begin{center}
\begin{tabular}{|c|c|c|c|c|c|c|}
  \hline
  $\hat{\lambda}_1$ &$\hat{\lambda}_2$  & $\hat{\lambda}_3$ & $\hat{\lambda_4}$& $\hat{\lambda_5}$& $\hat{\lambda_6}$& $\hat{\lambda_7}$ \\
  \hline
  $0.1161$ & $0.4553$ & $0.4911$ & $0.6198$& $0.8927$&$0.9996$&$1.0000$\\
  \hline
\end{tabular}
\end{center}
\end{table}

\begin{figure}[htb]
\begin{center}
\includegraphics[scale=0.5]{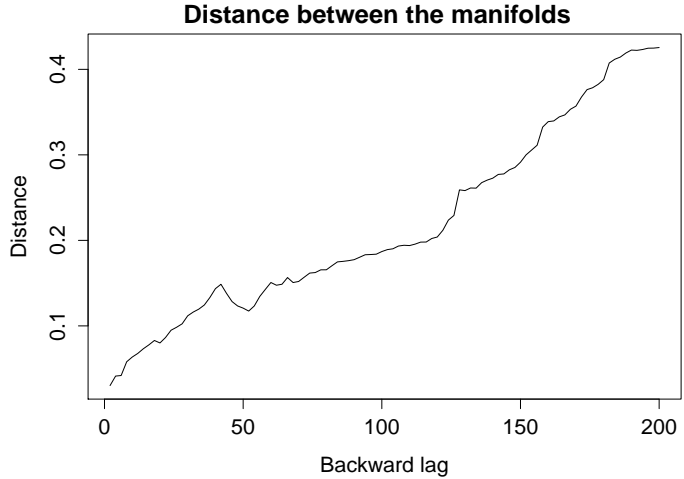}
\end{center}
\caption{Dynamic distance (\ref{dynamicD}) of the UK nominal daily spot curve}
\label{fig9}
\end{figure}

\section{Appendix: Estimating dim~$\mathcal{Q}$}\label{Appendix}
In this section, we give a concrete alternative for estimating $p=\text{dim}~\mathcal{Q}$ which is an important component in Theorem \ref{mainTHPAPER}. From (\ref{ss1}), we notice that if the stochastic PDE admits a finite-dimensional realization, then the matrix of the finite-rank linear operator $Q_T$ is given by $[Z]_T$ whenever

$$r_t = \phi_t + \sum_{j=1}^d Z^j_t \lambda_j; 0\le t\le T,$$
for each basis $\{\lambda_1, \ldots, \lambda_d\}$ of a finite-dimensional subspace which generates a finite-dimensional realization. Rigorously speaking, we cannot estimate directly $\text{dim}~\mathcal{Q}$ through high-frequency sampling of factors because they are not observed. In this case, one has to work with a high-frequency sampling of observed curves subject to noise. This section provides a feasible estimation procedure for this.

We choose to work with the Fourier-type estimator proposed by Malliavin and Mancino \cite{mancino} but we stress that other quadratic variation estimators can be certainly used as well. The strategy is to find the minimum requirements on the residual process in such way that one can estimate the random operator $Q_{T}$ via an observed curve process

$$X_t(x) = r_t(x)+\varepsilon_t(x);0\le t\le T, a\le x\le b.$$
If $\varepsilon$ is negligible in the quadratic variation sense, then the method of the estimation of the kernel $Q_{T}(u,v)$ is fully based on any reasonable non-parametric estimator of the integrated volatility. We assume that $X$ is a well-defined semimartingale random field.

In the sequel, without any loss of generality we assume that $[0,T]=[0,2\pi]$. Let $\Pi= \{t^n_{i}; i=0,\ldots,\bar{n}\}$ be the instant times of observations and $\rho(n)=\max_{0\le h\le \bar{n}-1}|t^n_{h+1}-t^n_h|$. In this section, we assume that $\rho(n)\rightarrow 0$ as $n\rightarrow \infty$ so we are able to sample the curves $x\mapsto X_t(x)$ in high-frequency in time. In the sequel, we make use of the following notation

$$\varphi_n(t):=\sup\{ t^n_k; t^n_k\le t\}$$

For given positive integers $M\ge 1$ and $n\ge 1$, we define

 \begin{eqnarray*}
\hat{Q}_{T}(u,v)&:=& \frac{1}{2M+1}\sum_{|s|\le M}\int_0^Te^{\text{i}s\varphi_n(t)}dX_t(u)\int_0^Te^{-\text{i}s\varphi_n(t)}dX_t(v)\\
& & \\
&=& \sum_{\ell,j=0}^{n-1}d_M(t^n_\ell-t^n_j)\Delta X_{t^n_{\ell+1}}(u)\Delta X_{t^n_{j+1}}(v),~u,v\in [a,b],
\end{eqnarray*}
where $\Delta X_{t^n_{\ell+1}}(u):=X_{t^n_{\ell+1}}(u)-X_{t^n_{\ell}}(u)$ and

\[ d_M(t) :=\frac{1}{2M+1}\sum_{|s|\le M}e^{\text{i}ts}=\left\{
\begin{array}{crcr}
1;~t=sT,~s\in\mathbb{Z} \\
\frac{1}{2M+1}\frac{sin[(M+1/2)t]}{sin(t/2)};~t\neq sT
\end{array}
\right.
\]

\noindent is the normalized Dirichlet kernel. Here $M$ encodes the Bohr convolution product and $n$ the discretization level of the Fourier transform of $\sigma_t(u,v)$. See Malliavin and Mancino~\cite{mancino} for more details.

To keep notation simple, from now on we set $t_i:= t^n_i; 0\le i \le \bar{n}$. The kernel $\hat{Q}_{T}(u,v)$ induces a random linear operator $\hat{Q}_{T}$ on the complexification $E_\mathbb{C}$ as follows

\begin{eqnarray*}
(\hat{Q}_{T}f)(u)&=& \langle  \hat{Q}_{T}(u,\cdot), f\rangle_{E_\mathbb{C}}\\
& &\\
&=& \frac{1}{2M+1}\sum_{|s|\le M}\sum_{\ell=0}^{n-1}  \sum_{k=0}^{n-1}\exp(\text{i}s(t_k-t_\ell))\Delta X_{t_{k+1}}(u)\langle \Delta X_{t_{l+1}}, f \rangle_{E_\mathbb{C}},
\end{eqnarray*}
for $f\in E_{\mathbb{C}}$. The reason to consider $E$ on the field $\mathbb{C}$ is due to a nice representation as follows. If $A$ and $B$ are two linear operators on Hilbert spaces then $AB$ and $BA$ share the same nonzero eigenvalues. Furthermore, if $\gamma$ is an eigenvector of $BA$, $A\gamma$ is an eigenvector of $AB$ with the same eigenvalue. So the strategy is to write $\hat{Q}_{T} = AB$ in such way that $BA:\mathbb{C}^p\rightarrow \mathbb{C}^p$ for some $p\ge 1$ and therefore one can easily relate the eigenvalues of $BA$ to $AB$. In fact, by the very definition of $\hat{Q}_{T}$ we have

$$
\hat{Q}_{T} = AB
$$
where $B:E_{\mathbb{C}}\rightarrow \mathbb{C}^{2M+1}$ is defined by

$$B(\cdot):= \Bigg(\sum_{\ell=0}^{n-1}\exp(-\text{i}(-M)t_\ell)\langle \Delta X_{t_{\ell+1}}, \cdot~\rangle_{E_\mathbb{C}},\ldots,  \sum_{\ell=0}^{n-1}\exp(-\text{i}(M)t_\ell)\langle \Delta X_{t_{\ell+1}},\cdot~\rangle_{E_{\mathbb{C}}}  \Bigg)$$

\noindent and $A:\mathbb{C}^{2M+1}\rightarrow E_{\mathbb{C}}$ is defined by

\begin{equation}\label{Aoper}
(Ax)(\cdot):= \frac{1}{2M+1}\sum_{|s|\le M}x_s \sum_{k=0}^{n-1}\exp(\text{i}st_k)\Delta X_{t_{k+1}}(\cdot);x\in\mathbb{C}^{2M+1}.
\end{equation}

By the very definition, $\bar{Q}_{T}:=BA:\mathbb{C}^{2M+1}\rightarrow \mathbb{C}^{2M+1}$ is given componentwise by

$$\bar{Q}_{T} y = \frac{1}{2M+1}\sum_{|s|\le M}\sum_{k,\ell=1}^{n-1}y_s\exp\big(\text{i}(st_k-mt_\ell)\big)\langle \Delta X_{t_{\ell+1}},\Delta X_{t_{k+1}} \rangle_{E_{\mathbb{C}}},
$$

\noindent for $y\in\mathbb{C}^{2M+1},~m=-M,\ldots, M.$ We then arrive at the following elementary result.

\begin{lemma}
The random linear operators $\hat{Q}_{T}$ and $\bar{Q}_{T}$ share the same nonzero eigenvalues in $\mathbb{C}$. Let $\hat{p}$ be the number of nonzero eigenvalues $\{\hat{\theta}_i=1,\ldots,\hat{p}\}$ of $\bar{Q}_T$ and let $\gamma_j=(\gamma_j(-M),\ldots,\gamma_j(M)),~j=1,\ldots,\hat{p}$ be the corresponding eigenvectors in $\mathbb{C}^{2M+1}$. Then

$$\frac{1}{2M+1}\sum_{|s|\le M}\gamma_j(s)\Bigg(\sum_{k=0}^{n-1}\exp(\text{i} st_k)\Delta X_{t_{k+1}} \Bigg),\quad j=1,\ldots,\hat{p}$$

\noindent are the $\hat{p}$ eigenfunctions of $\hat{Q}_{T}$.
\end{lemma}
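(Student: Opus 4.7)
My plan is to invoke the standard linear algebra identity, already recalled in the paragraph preceding the lemma, that for operators $A:\mathbb{C}^{2M+1}\to E_{\mathbb{C}}$ and $B:E_{\mathbb{C}}\to \mathbb{C}^{2M+1}$ the compositions $AB$ and $BA$ share the same nonzero spectrum, with the eigenvector correspondence $\gamma \mapsto A\gamma$. The operators $A$ and $B$ have been explicitly constructed earlier so that $\hat{Q}_T = AB$ and $\bar{Q}_T = BA$; the proof is then a direct verification.

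First I would establish the spectral correspondence. Suppose $\bar{Q}_T \gamma_j = \hat{\theta}_j \gamma_j$ with $\hat{\theta}_j \neq 0$. Applying $A$ to both sides gives
$$
\hat{Q}_T(A\gamma_j) = (AB)(A\gamma_j) = A(BA\gamma_j) = A(\bar{Q}_T \gamma_j) = \hat{\theta}_j (A\gamma_j).
$$
Moreover $A\gamma_j \neq 0$, since otherwise $BA\gamma_j=0$ would contradict $\bar{Q}_T\gamma_j = \hat{\theta}_j \gamma_j \neq 0$. Conversely, if $\hat{Q}_T f = \mu f$ with $\mu \neq 0$, then $Bf$ is nonzero and satisfies $\bar{Q}_T(Bf) = B(\hat{Q}_T f) = \mu (Bf)$. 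Together these show that $\hat{Q}_T$ and $\bar{Q}_T$ share the same nonzero eigenvalues, which proves the first assertion of the lemma. Since $\hat{Q}_T$ factors through the finite-dimensional space $\mathbb{C}^{2M+1}$, its range is finite-dimensional and the nonzero spectrum is in both cases a common finite list.

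Next I would check that $\gamma_j \mapsto A\gamma_j$ preserves linear independence on the nonzero spectral part, so that the $\hat{p}$ linearly independent eigenvectors of $\bar{Q}_T$ produce $\hat{p}$ linearly independent eigenfunctions of $\hat{Q}_T$. Indeed, if $\sum_j c_j A\gamma_j = 0$, applying $B$ yields $\sum_j c_j \hat{\theta}_j \gamma_j = 0$, and since the $\gamma_j$ are linearly independent and the $\hat{\theta}_j$ nonzero, we conclude $c_j = 0$ for every $j$. Substituting the explicit form of $A$ from (\ref{Aoper}) into $A\gamma_j$ produces
$$
(A\gamma_j)(\cdot) = \frac{1}{2M+1}\sum_{|s|\le M}\gamma_j(s)\sum_{k=0}^{n-1}\exp(\text{i} s t_k)\Delta X_{t_{k+1}}(\cdot),
$$
which is precisely the expression displayed in the statement.

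There is no substantive obstacle: the argument is a direct consequence of the factorization $\hat{Q}_T = AB$, $\bar{Q}_T = BA$ already exhibited in the text and the elementary spectral identity for products of operators. The only items one must verify carefully are the nonvanishing $A\gamma_j \neq 0$ on each nonzero eigenspace and the independence-preservation under $A$, both of which follow immediately from the identity $BA = \bar{Q}_T$ combined with $\hat{\theta}_j \neq 0$.
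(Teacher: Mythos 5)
Your proof is correct and follows essentially the same approach as the paper, namely exploiting the factorization $\hat{Q}_T = AB$, $\bar{Q}_T = BA$ and the elementary spectral identity for products. You are in fact somewhat more careful than the paper's proof, which only records the forward direction $\hat{Q}_T(A\gamma_j)=\hat{\theta}_j A\gamma_j$ and the explicit form of $A\gamma_j$, whereas you also verify $A\gamma_j\neq 0$, give the converse direction via $B$, and check that $A$ preserves linear independence on the nonzero spectral part.
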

\begin{proof}
Let $\hat{\theta}_j\in\mathbb{C}$ be a nonzero eigenvalue of $\bar{Q}_{T}$ and let $\gamma_j\in\mathbb{C}^{2M+1}$ be the corresponding eigenvector. Then

\begin{equation}\label{sr}
\bar{Q}_{T}\gamma_j = \hat{\theta}_j\gamma_j~\text{and}~\hat{Q}_{T}(A\gamma_j)=\hat{\theta}_jA\gamma_j~a.s,
\end{equation}

\noindent where $A$ is the operator given by~(\ref{Aoper}). By writing~(\ref{sr}) component by component we have

$$\frac{1}{2M+1}\sum_{|s|\le M}\sum_{k,\ell=0}^{n-1}\gamma_j(s)\exp\big(\text{i}(st_k-rt_\ell)\big)\langle \Delta X_{t_{\ell+1}}, \Delta X_{t_{k+1}}  \rangle_{E_{\mathbb{C}}} = \hat{\theta}_j\gamma_j(r)~a.s$$

\noindent for $r=-M,\ldots, M$. On the other hand,

$$A\gamma_j = \frac{1}{2M+1}\sum_{|s|\le M}\gamma_j(s)\Bigg(\sum_{k=0}^{n-1}\exp(\text{i}st_k)\Delta X_{t_{k+1}} \Bigg),\quad j=1,\ldots,\hat{p}~a.s.$$

\noindent This concludes the proof of the Lemma.
\end{proof}

\begin{remark}
Let

\begin{equation}\label{belements}
\frac{1}{2M+1}\sum_{|s|\le M}\gamma_j(s)\Bigg(\sum_{k=0}^{n-1}\exp(\text{i}st_k)\Delta X_{t_{k+1}} \Bigg),\quad j=1,\ldots,\hat{p}~a.s
\end{equation}

\noindent be the eigenvectors of $\hat{Q}_{T}$ related to its nonzero eigenvalues $\{\hat{\theta}_j; j=1,\ldots, \hat{p} \}$. Since $\hat{Q}_{T}$ is a self-adjoint finite-rank operator then the following spectral decomposition holds a.s

$$\hat{Q}_{T} f = \sum_{i=1}^{\hat{p}}\hat{\theta}_i\langle f,\hat{\varphi}_i\rangle_{E_{\mathbb{C}}} \hat{\varphi}_{i};~ f\in E_{\mathbb{C}},$$
where $\hat{p}\le 2M+1$ a.s for every $n,M$ and $\{\hat{\varphi}_i; i=1,\ldots, \hat{p}\}$ is an orthonormal set by applying a Gram-Schmidt algorithm to the functions given by~(\ref{belements}).
\end{remark}

Let us now introduce the basic assumptions on the residual process $\varepsilon$. Since the estimation is based on a high-frequency sampling we need to impose some structure on the continuous-time dynamics.

\

\noindent \textbf{(B1)} The residual process $\varepsilon$ is an It\^o semimartingale field where the drift component $h$ satisfies

$$\sup_{0\le t\le T}\|h_t\|_E \in L^p$$
for every $p>1$.

\

\noindent \textbf{(B2)} The quadratic variation of $\varepsilon(u)$ at time $T$ satisfies

$$\int_{\mathbb{R}}[\varepsilon(u),\varepsilon(u)]_{T}\mu(du)=0~a.s.$$

\

\noindent\textbf{(B3)} The following growth assumption holds

$$0 < \liminf_{n,M\rightarrow\infty} M\rho(n) \le \limsup_{n,M\rightarrow\infty} M\rho(n) < \infty.$$

\

\noindent \textbf{(H1)} The vector fields $F,\sigma^i:E\rightarrow E$ are globally Lipschitz for each $i=1,\ldots, m$.

\

\noindent \textbf{(H2)} Linear growth condition on the vector fields $F, \sigma^1; 1\le i \le m$ in (\ref{spde1}): There exists a constant $C>0$ such that

$$\|F(x)\|^2_E + \sum_{i=1}^m\|\sigma^i(x)\|^2_E \le C^2(1+ \|x\|^2_E)~\text{for every}~x\in E.$$

\
\begin{remark}\label{discussionFRM}
As far as the consistency problem of the HJM model (see Section 4.2), Assumption (\textbf{B2}) means that the initial fitting method used to interpolate points which generates $X$ cannot introduce an extrinsic volatility. The interpolation must
be chosen in such way that the resulting observed volatility on the whole curve
must be fully dictated by the market and not to the particular choice of fitting. See also Assumption (\textbf{Q4}). The semimartingale
decomposition yields the following structure on the residual process

$$\varepsilon_t= e + \int_0^th_sds$$
for some $\mathcal{F}_0$-measurable random variable $e = X_0 - r_0$ and an integrable adapted process $h$ satisfying (\textbf{B1}). Assumption (\textbf{B3}) is a technical assumption in order to get optimal bounds but it is also linked with different flavors
between the exact Fourier estimator and the usual quadratic variation estimator for $Q_T$ . See Malliavin and Mancino \cite{mancino} and Clement and Gloter \cite{clement} for further details.
\end{remark}

\noindent Under \textbf{(H1)} and \textbf{(H2)}, it is well-known for every initial condition $\xi\in E$, there exists a unique mild solution $r_t^\xi$ of the stochastic PDE. Moreover, the following integrability property holds

$$\mathbb{E}\sup_{0\le t\le T}\|r^\xi_t\|^q_E< \infty$$
for every $q> 1$ and $\xi\in E$.

The following result is a functional version of (and almost straightforward consequence) of Proposition 1 and Lemma 3 in \cite{clement}. In the sequel, $\|\cdot\|_{(2)}$ is the Hilbert-Schmidt norm operator over $E_\mathbb{C}$ and to keep notation simple, we write $\|\cdot\| = \|\cdot\|_{E_\mathbb{C}}$.

\begin{proposition}\label{fest}
Assume that~\textbf{(A1, A2, B1, B2, B3, H1, H2)} hold and in addition

\begin{equation}\label{a1}
\mathbb{E}^{1/2}\sup_{0\le t\le T}|\partial_v\sigma^j(r_t)(\cdot)|^4 \in L^1(\mu)
\end{equation}
for each $j=1,\ldots, m$. Then

$$
\mathbb{E}\|Q_{T}-\hat{Q}_{T}\|^2_{(2)}=O(\rho(n)).
$$
\end{proposition}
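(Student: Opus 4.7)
My plan is to decompose $\hat Q_T - Q_T$ into a main ``signal'' piece plus three pieces involving the residual process $\varepsilon$, control each in Hilbert--Schmidt norm, and show that the signal piece gives the stated rate $O(\rho(n))$ while the noise pieces are of the same order or smaller. Since $\Delta X_{t_{i+1}}(u) = \Delta r_{t_{i+1}}(u) + \Delta\varepsilon_{t_{i+1}}(u)$ and the kernel defining $\hat Q_T(u,v)$ is bilinear in these increments, bilinearity gives
$$
\hat Q_T(u,v) = \hat Q_T^{rr}(u,v) + \hat Q_T^{r\varepsilon}(u,v) + \hat Q_T^{\varepsilon r}(u,v) + \hat Q_T^{\varepsilon\varepsilon}(u,v),
$$
with the four kernels defined by the obvious subsums. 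The associated operators on $E_{\mathbb C}$ inherit this splitting, so by the triangle inequality it is enough to bound each piece separately in $\mathbb E\|\cdot\|_{(2)}^2$.

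For the signal term $\hat Q_T^{rr} - Q_T$, I would work pointwise in $(u,v)$: under (A1), (A2), (H1), (H2) together with the integrability hypothesis (\ref{a1}), the pair $(r(u), r(v))$ is a scalar It\^o semimartingale whose volatility $\sum_j \sigma^j(r_s)(u)\sigma^j(r_s)(v)$ satisfies precisely the $L^4$-type moment bound required by Proposition~1 and Lemma~3 of Cl\'ement--Gloter~\cite{clement}. Under the Nyquist-type balancing condition (B3), $M\rho(n)$ is bounded above and below, so those estimates yield a pointwise bound
$$
\mathbb E\bigl|\hat Q_T^{rr}(u,v) - Q_T(u,v)\bigr|^2 \leq C(u,v)\,\rho(n),
$$
where $C(u,v)$ is integrable in $(u,v)$ against the Hilbert--Schmidt measure of $E_{\mathbb C}$ thanks to (\ref{a1}) and (H2). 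Summing the pointwise bounds along an orthonormal basis of $E_{\mathbb C}$ (equivalently integrating the kernel) gives $\mathbb E\|\hat Q_T^{rr} - Q_T\|_{(2)}^2 = O(\rho(n))$.

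For the residual pieces I would use that (B1) and (B2) force $\varepsilon_t(u) = \varepsilon_0(u) + \int_0^t h_s(u)\,ds$ for $\mu$-a.e.\ $u$ almost surely, so that $\Delta\varepsilon_{t_{i+1}}(u) = \int_{t_i}^{t_{i+1}} h_s(u)\,ds$ is pointwise of order $\rho(n)\sup_s|h_s(u)|$. The cross terms $\hat Q_T^{r\varepsilon} + \hat Q_T^{\varepsilon r}$ then pair the $O(\sqrt{\rho(n)})$ semimartingale increments of $r$ with the $O(\rho(n))$ drift increments of $\varepsilon$; under (B3) the Dirichlet kernel obeys a uniform $\ell^1$-type estimate $\sum_j |d_M(t_\ell - t_j)|\rho(n) \leq C$, so Cauchy--Schwarz together with the $L^p$ control from (B1) and (H2) gives an $O(\rho(n))$ bound after integration over $(u,v)$. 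The pure-noise piece $\hat Q_T^{\varepsilon\varepsilon}$ has two drift-increment factors and the same kernel summation produces an $O(\rho(n)^2) = o(\rho(n))$ bound, so the residual contributions do not spoil the rate.

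The main obstacle will be the Dirichlet-kernel bookkeeping in the cross terms: one must exploit cancellation in $d_M$, not merely $|d_M|$, in order not to lose a logarithmic factor coming from the sign-changing behaviour of the kernel, and the Hilbert--Schmidt lift must handle the joint dependence of the estimates on $(u,v)$ via the integrability assumption (\ref{a1}). Once this accounting is in place, the argument is a functional adaptation of Cl\'ement--Gloter combined with the almost-trivial semimartingale structure of $\varepsilon$ forced by (B1)--(B2).
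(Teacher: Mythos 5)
Your decomposition --- $X=r+\varepsilon$ and expanding $\hat Q_T$ by bilinearity into $\hat Q^{rr}+\hat Q^{r\varepsilon}+\hat Q^{\varepsilon r}+\hat Q^{\varepsilon\varepsilon}$ --- is a genuinely different split from the paper's, which writes $X=\tilde r+\tilde\varepsilon$ with $\tilde r_t(u)=\sum_j\int_0^t\sigma^j(r_s)(u)\,dB^j_s$ the pure martingale part and $\tilde\varepsilon_t(u)=\int_0^t\xi_s(u)\,ds$, $\xi_t = Ar_t+F(r_t)+h_t$, the \emph{total} finite-variation part (signal drift plus noise drift). The paper's split is what interfaces directly with the Cl\'ement--Gloter machinery: their Lemma~3 and Theorem~1 estimates are organized exactly around martingale-by-martingale ($J$ terms), martingale-by-drift ($I_{1}, I_{2}$), and drift-by-drift ($I_3$) pieces. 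With your split you will still have to further decompose $r$ into its martingale and drift parts inside $\hat Q^{rr}$ and inside the cross terms, so you save no work, and in addition you must either verify the Cl\'ement--Gloter moment hypotheses for $r$ as a whole (which needs the non-trivial observation, drawn from (A1) and Tappe's~\cite{tappe1} invariant-manifold structure, that $A$ restricted to $V$ is bounded so that $\sup_t\|\xi_t\|$ has moments) or redo their estimates; the paper makes that point explicit and you skip it.

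There are also two concrete gaps. First, you treat the Hilbert--Schmidt lift as ``integrating the kernel'' or ``summing over an orthonormal basis,'' but in this Sobolev space $E$ the Hilbert--Schmidt norm of the integral operator with kernel $K(u,v)$ is \emph{not} $\int\!\!\int|K|^2\,d\mu\,d\mu$: it is the four-term quantity $|K(0,0)|^2 + \int|\partial_v K(0,v)|^2\,d\mu + \int|\partial_u K(u,0)|^2\,d\mu + \int\!\!\int|\partial^2_{uv}K(u,v)|^2\,d\mu\,d\mu$, which is precisely the paper's $T_1+T_2+T_3+T_4$. The hypothesis~(\ref{a1}) on $\partial_v\sigma^j(r_t)(\cdot)$ exists exactly because $T_4$ forces you to differentiate the stochastic integrals in the spatial variable; your pointwise bound on $\hat Q^{rr}(u,v)-Q_T(u,v)$ does not control this without extra work. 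Second, your claimed ``uniform $\ell^1$ estimate'' $\sum_j|d_M(t_\ell-t_j)|\rho(n)\le C$ is false: the $L^1$ norm of the Dirichlet kernel grows like $\log M$, so under (B3) that sum is $O(\rho(n)\log(1/\rho(n)))$, not $O(1)$, and a naive Cauchy--Schwarz with $|d_M|$ loses a logarithm. You flag this yourself but do not resolve it; the paper resolves it by Cl\'ement--Gloter's $L^2$ estimate $\int_0^T\!\int_0^t d_{M,n}(\ell,t)^2\,d\ell\,dt = O(\rho(n))$ (their Lemma~3, which is where both the upper and lower bound on $M\rho(n)$ in (B3) are used) together with their $\eta$-splitting of the time integral for the martingale-by-drift cross term $I_{n,M,2}$.
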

\begin{proof}
In the sequel, we denote by $C$ a positive constant which may differ from line to line. We also decompose

$$X_t(u) = \tilde{r}_t(u) + \tilde{\varepsilon}_t(u),$$
into
$$\tilde{r}_t(u):=\sum_{j=1}^m\int_0^t\sigma^j(r_s)(u) dB^j_s;~0\le t\le T,~u\in [a,b],$$
$$\tilde{\varepsilon}_t(u) = \int_0^t\xi_s(u)ds$$
where
$$\xi_t(u): = Ar_t(u) + F(r_t)(u) + h_t(u); 0\le t\le T, u\in [a,b].$$
For a given $(u,v)\in[a,b]\times [a,b]$, integration by parts and~\textbf{(B2)} yield

\begin{eqnarray*}
Q_{T}(u,v)-\hat{Q}_{T}(u,v) &=&\int_0^{T}\Big(\int_0^td_{M,n}(\ell,t)dX_\ell(u)\Big)dX_t(v)\\
& &\\
&+& \int_0^{T}\Big(\int_0^td_{M,n}(\ell,t)dX_\ell(v)\Big)dX_t(u)\\
& &\\
&=:&R_{n,M}(u,v)
\end{eqnarray*}
where

$$d_{M,n}(\ell, t):=d_M(\varphi_n(\ell) - \varphi_n(t)),$$

\begin{eqnarray*}
R_{n,M}(u,v) = J_{n,M,1}(u,v) + J_{n,M,2}(u,v) + \sum_{i=1}^3I_{n,M,i}(u,v) + \sum_{i=1}^3\hat{I}_{n,M,i}(u,v),
\end{eqnarray*}
and
\begin{small}
$$J_{n,M,1}(u,v):= \int_0^{T}\Big(\int_0^td_{M,n}(\ell,t)d\tilde{r}_\ell(u)\Big)d\tilde{r}_t(v), $$

$$J_{n,M,2}(u,v):= \int_0^{T}\Big(\int_0^td_{M,n}(\ell,t)d\tilde{r}_\ell(v)\Big)d\tilde{r}_t(u),$$

$$I_{n,M,1}(u,v) := \int_0^{T}\Big(\int_0^td_{M,n}(\ell,t)d\tilde{\varepsilon}_\ell(u)\Big)d\tilde{r}_t(v),$$

$$I_{n,M,2}(u,v) := \int_0^{T}\Big(\int_0^td_{M,n}(\ell,t)d\tilde{r}_\ell(u)\Big)d\tilde{\varepsilon}_t(v),$$

$$I_{n,M,3}(u,v) := \int_0^{T}\Big(\int_0^td_{M,n}(\ell,t)d\tilde{\varepsilon}_\ell(u)\Big)d\tilde{\varepsilon}_t(v),$$
\end{small}
where $\hat{I}_{n,M,i}$ are the symmetric quantities w.r.t $I_{n,M,i}$. By the very definition

\begin{eqnarray*}
\nonumber \|Q_{T} - \hat{Q}_{T}\|^2_{(2)}&=&\|(Q_{T}-\hat{Q}_{T})(0,\cdot)\|^2 + \int_{[a,b]}\Big\|\partial_u (Q_{T}-\hat{Q}_{T})(u,\cdot)\Big\|^2\mu(du)\\
& &\\
&=& |Q_{T}(0,0)-\hat{Q}_{T}(0,0)|^2 + \int_{[a,b]}|\partial_vQ_{T}(0,v)-\partial_v\hat{Q}_{T}(0,v) |^2\mu(dv)\\
& &\\
&+& \int_{[a,b]}|\partial_uQ_{T}(u,0)-\partial_u \hat{Q}_{T}(u,0)|^2\mu(du)\\
& &\\
&+& \int_{[a,b]^2}|\partial^2_{vu}Q_{T}(u,v)-\partial^2_{vu}\hat{Q}_{T}(u,v)|^2\mu(du)\mu(dv)\\
& &\\
&=:&T_1(n,M) + T_2(n,M) + T_3(n,M) + T_4(n,M).
\end{eqnarray*}

\noindent \textit{Step 1:}~ The term $T_1$. By the invariance hypothesis~\textbf{(A1)}, we know that~[see~\cite{tappe1}; Corollary 2.13] $V\subset dom~(A)$ so we may consider $(dom~(A), A)$ as a bounded operator restricted to $V$. Moreover, we shall represent

$$
r_t = \textbf{p}_{{V}^{\perp}}r_t+ \textbf{p}_{V}r_t = \textbf{p}_{V^\perp}\mathcal{V}^h_t + \textbf{p}_{V}r_t,
$$
where $\textbf{p}$ is the usual projection and $h=r_0$. From Theorem 2.11 in~\cite{tappe1}, we also know that $t\mapsto A(\pi_{V^\perp}\mathcal{V}^h_t)$ is continuous and therefore there exists a constant $C$ such that
$\|Ar_t\| \le C + C \|r_t\|$ for every $t\in [0,T]$. Based on these facts, we may use the linear growth conditions \textbf{(H1-H2)} and~(\textbf{B1}) to arrive at the following estimate

\begin{equation}\label{driftest}
\sup_{0\le t\le T}\|\xi_t\|\le C +  C\sup_{0\le t\le T}\|r_t\| + C\sup_{0\le t\le T}\|h_t\|.
\end{equation}
In this case, one can easily check that assumptions in Proposition 1 of~\cite{clement} hold trivially and all their estimates as well. In this case, we have
$$T_1(n,M)\le C \int_0^T\int_0^t d^2_{M,n}(\ell,t)d\ell dt.$$
Lemma 3 in~\cite{clement} yields $T_1(n,M)=O(\rho(n))$.

\

\noindent \textit{Step 2:}~The term $T_2 + T_3$. Let us now treat $T_{2}(n,M) + T_3(n,M)$. For a given $(i,j)\in \{1,\ldots, m \}^2$, Burkholder-Davis-Gundy inequality and~\textbf{(H1-H2)} yield

\begin{small}
\begin{eqnarray*}
\mathbb{E}\int_{[a,b]}\Big|\int_0^{T}\int_0^td_{M,n}(\ell,t)\sigma^j(r_\ell)(0)dB^j_\ell
\partial_v\sigma^i(r_t)(v)dB^i_t\Big|^2\mu(dv)&\le& C\int_0^{T}\int_0^t d^2_{M,n}(\ell,t)d\ell dt
\end{eqnarray*}
\end{small}
This yields $\int_{[a,b]}\mathbb{E}|\partial_v J_{n,M,1}(0,v)|^2\mu(dv) = O(\rho(n))$. The same argument also holds for $\partial_vJ_{n,M,2}(0,v)$ and we conclude that

$$\int_{[a,b]}\mathbb{E}|\partial_vJ_{n,M,1}(0,v) +\partial_vJ_{n,M,2}(0,v) |^2\mu(dv) = O(\rho(n))$$
The drift part is estimated as follows. Cauchy-Schwartz and Burkholder-Davis-Gundy inequalities, the estimate~(\ref{driftest}),~\textbf{(H1-H2), A1, B1} and Lemma 3 in~\cite{clement} yield

\begin{small}
\begin{eqnarray*}
\int_{[a,b]}\mathbb{E}|\partial_vI_{n,M,1}(0,v)|^2\mu(dv)&\le&\mathbb{E}\sum_{i=1}^d\int_0^T\Bigg(\int_0^td^2_{M,n}(\ell,t)d\ell
\times \int_0^t\|\xi_\ell\|^2d\ell\\
& &\\
&\times& \|\sigma^i(r_t)\|^2\Bigg)dt\\
& &\\
&\le &C \int_0^T\int_0^t d^2_{M,n}(\ell,t)d\ell dt= O(\rho(n)).
\end{eqnarray*}
\end{small}
The term $\partial_vI_{n,M,2}(0,v)$ is more evolved but we can repeat the same steps as in the proof of Theorem 1 in~\cite{clement} in page 1114 to represent

$$|I_{n,M,2}(u,v)|^2 = \int_{[0,T]^2}Y_{n,M}(u,t,t) \xi_t(v)Y_{n,M}(u,t^{'},t^{'})\xi_{t^{'}}(v)dtdt^{'},$$

where

$$Y_{n,M}(u,t,s):=\int_0^sd_{M,n}(\ell,t)d\tilde{r}_\ell(u).$$

We fix $\eta > 0$ and we split $|I_{n,M,2}(0,v)|^2 =A_{n,M,1}(u,v,\eta) + A_{n,M,2}(u,v,\eta)$ so that

$$\partial_vA_{n,M,1}(0,v,\eta):=\int_0^{T}\int_{t-\eta}^{t}Y_{n,M}(0,t,t) \partial_v\xi_t(v)Y_{n,M}(0,t^{'},t^{'})\partial_v\xi_{t^{'}}(v)dt^{'}dt$$

$$\partial_vA_{n,M,2}(0,v,\eta):= \int_0^{T}\int_{0}^{t-\eta}Y_{n,M}(0,t,t) \partial_v\xi_t(v)Y_{n,M}(0,t^{'},t^{'})\partial_v\xi_{t^{'}}(v)dt^{'}dt$$

By applying the same arguments in the proof of Theorem 1 in~\cite{clement} with small $\eta>0$ together with Cauchy-Schwartz inequalities on $E$ and assumptions \textbf{(H1-H2), B1, B3}, we also have

$$\int_{[a,b]}\mathbb{E}|I_{n,M,2}(0,v)|^2\mu(dv) = O(\rho(n)).$$

Moreover, \textbf{(B1,B3)} and Lemma 3 in~\cite{clement} yield

$$\int_{[a,b]}\mathbb{E}|I_{n,M,3}(0,v)|^2\mu(dv)\le C\int_0^{T}\int_0^t d^2_{M,n}(\ell,t)d\ell dt \le C\rho(n).$$ By the symmetry of the other terms, these estimates allow us to conclude that $T_2(n,M) + T_3(n,M)= O(\rho(n))$.

\

\noindent \textit{Step 3:} The term $T_4$. For given $(i,j)\in \{1,\ldots, m \}^2$, Burkholder-Davis-Gundy and Cauchy-Schwartz inequalities and \textbf{(H1-H2)} with~(\ref{a1}) yield

\begin{small}
\begin{eqnarray*}
\mathbb{E}\int_{[a,b]^2}|\partial^2_{vu}J_{n,M,1}(u,v)|^2\mu(dv)\mu(du)&=&
\int_{[a,b]}\mathbb{E}\int_0^{T}\Big(\int_0^td_{M,n}(\ell,t)
\partial_u\sigma^j(r_\ell)(u)dB^j_\ell\Big)^2\|\sigma^i(r_t)\|^2dt\mu(du)\\
& &\\
&\le &C \int_0^T\int_0^t d^2_{M,n}(\ell,t)d\ell dt\int_{[a,b]} \mathbb{E}^{1/2}\sup_{0\le t\le T}|\partial_v\sigma^j(r_t)(\cdot)|^4\mu(du).
\end{eqnarray*}
\end{small}

Therefore, by the symmetry of the martingale terms and Lemma 3 in~\cite{clement} we have

$$\int_{\mathbb{R}^2_+}\mathbb{E}|\partial^2_{vu}J_{n,M,1}(u,v) +\partial^2_{vu}J_{n,M,2}(u,v) |^2\mu(dv)\mu(du) = O(\rho(n)).$$

Similarly, Lemma 3 in~\cite{clement} and~(\ref{driftest}) yield

\begin{eqnarray*}
\int_{\mathbb{R}_+}\mathbb{E}|\partial^2_{vu}I_{n,M,1}(u,v)|^2\mu(du)\mu(dv)&\le&
\int_0^T\int_0^t d^2_{M,n}(\ell,t)d\ell dt\\
& &\\
&\times&\mathbb{E}\sup_{0\le t\le T}\|\xi_t\|^2\\
& &\\
&\le& C\int_0^T\int_0^t d^2_{M,n}(\ell,t)d\ell dt= O(\rho(n)).
\end{eqnarray*}
Summing up all the inequalities for $T_1,~T_2,~T_3$ and $T_4$, we conclude the proof.
\end{proof}

In the sequel, for each $\epsilon >0$, we define

$$\hat{p}^\epsilon:=\text{number of non-zero eigenvalues of}~\bar{Q}_T~\text{greater or equal to}~\epsilon~a.s$$

\begin{corollary}\label{pepsilon}
Assume that Assumptions in Proposition \ref{fest} hold and let $\mathcal{Q}=Range~Q_T$ with dimension $p$. Let $\epsilon\to 0$ in such a way that $\epsilon^2 \rho(n)^{-1} \to\infty$ as $n\to\infty$. Then, $\mathbb{P}(\hat{p}^\epsilon \neq p)=O(\epsilon^{-2}\rho(n))$.

\end{corollary}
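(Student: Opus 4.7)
The plan is to combine two ingredients: the Hilbert–Schmidt consistency of $\hat{Q}_T$ supplied by Proposition \ref{fest}, and a Weyl-type perturbation bound on eigenvalues. Since $\mathcal{Q} = \text{Range}(Q_T)$ has dimension $p$, the ordered eigenvalues of $Q_T$ satisfy $\theta_1\ge \cdots\ge \theta_p > 0$ a.s.\ and $\theta_j=0$ for $j>p$. On the other hand, since $\hat{Q}_T$ and $\bar{Q}_T$ share non-zero eigenvalues, $\hat{p}^\epsilon$ equals the number of eigenvalues $\hat{\theta}_i$ of the self-adjoint compact operator $\hat{Q}_T$ satisfying $\hat{\theta}_i\ge \epsilon$, so the task reduces to controlling the deviations $|\hat{\theta}_i - \theta_i|$.

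The first key step is to invoke Weyl's inequality in the form $|\hat{\theta}_i - \theta_i|\le \|\hat{Q}_T - Q_T\|_{\text{op}}\le \|\hat{Q}_T - Q_T\|_{(2)}$ for every $i$ (both operators are self-adjoint and finite-rank on $E_\mathbb{C}$). From this I obtain two implications: $\hat{\theta}_{p+1}\le \|\hat{Q}_T - Q_T\|_{(2)}$ (using $\theta_{p+1}=0$), and $\hat{\theta}_p\ge \theta_p - \|\hat{Q}_T - Q_T\|_{(2)}$. Consequently, on the event
$$\Omega_{n,\epsilon}:=\{\|\hat{Q}_T - Q_T\|_{(2)} < \epsilon\}\cap \{\theta_p \ge 2\epsilon\},$$
the $p$ largest eigenvalues of $\hat{Q}_T$ exceed $\epsilon$ and the $(p+1)$-st is strictly below $\epsilon$, so $\hat{p}^\epsilon = p$ on $\Omega_{n,\epsilon}$.

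The second key step is to apply Markov's inequality together with Proposition \ref{fest}:
$$\mathbb{P}\bigl(\|\hat{Q}_T - Q_T\|_{(2)}\ge \epsilon\bigr)\le \epsilon^{-2}\mathbb{E}\|\hat{Q}_T - Q_T\|_{(2)}^{2}=O(\epsilon^{-2}\rho(n)).$$
Taking complements in the inclusion $\Omega_{n,\epsilon}\subseteq \{\hat{p}^\epsilon=p\}$ gives
$$\mathbb{P}(\hat{p}^\epsilon\neq p)\le \mathbb{P}\bigl(\|\hat{Q}_T - Q_T\|_{(2)}\ge \epsilon\bigr)+\mathbb{P}(\theta_p<2\epsilon),$$
and since $\theta_p>0$ a.s., the second term tends to $0$ as $\epsilon\to 0$ by dominated convergence and is absorbed in the stated $O$-rate under the scaling $\epsilon^{-2}\rho(n)\to 0$ (with $\epsilon\to 0$ slowly enough so that the small-ball probability of $\theta_p$ near the origin is negligible against the estimation error).

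The only real obstacle is the second term $\mathbb{P}(\theta_p<2\epsilon)$, which is controlled solely by the law of the smallest positive eigenvalue of the random operator $Q_T$ and is independent of the estimation scheme. The delicate point is that the rate $\epsilon^{-2}\rho(n)$ is, strictly speaking, the rate coming from the estimation piece; the lower-tail contribution is a genuinely separate $o(1)$ quantity that requires no further tools beyond $\theta_p>0$ a.s. Everything else (the Weyl bound, Markov's inequality, and the Hilbert–Schmidt consistency) is essentially mechanical given Proposition \ref{fest}.
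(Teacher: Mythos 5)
Your proof follows essentially the same route as the paper's: bound the eigenvalue deviations by Weyl's inequality, $|\hat\theta_i - \theta_i|\le\|\hat{Q}_T - Q_T\|_{(2)}$, split the error event into over-estimation $\{\hat{p}^\epsilon>p\}=\{\hat\theta_{p+1}\ge\epsilon\}$ (handled by $\theta_{p+1}=0$ plus Chebyshev and Proposition \ref{fest}) and under-estimation $\{\hat{p}^\epsilon<p\}=\{\hat\theta_p<\epsilon\}$, and use Markov's inequality with $\mathbb{E}\|\hat{Q}_T-Q_T\|^2_{(2)}=O(\rho(n))$. The paper does precisely this, modulo an index/sign typo in its last display.

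You have also put your finger on a genuine subtlety: since $Q_T$ is a random operator, $\theta_p$ is a random variable, and the term $\mathbb{P}(\theta_p<2\epsilon)$ that controls the under-estimation event is governed by the small-ball behaviour of the law of $\theta_p$ near zero; Proposition \ref{fest} says nothing about this, and $\theta_p>0$ a.s.\ alone only gives $\mathbb{P}(\theta_p<2\epsilon)=o(1)$, not the sharper rate $O(\epsilon^{-2}\rho(n))$ claimed in the statement. The paper's proof sidesteps this by implicitly writing a Chebyshev bound with $(\theta_p-\epsilon)^{-2}$ as though $\theta_p$ were deterministic and bounded away from zero. So what you flag as "the only real obstacle" is not an obstacle you need to apologize for — it is a hypothesis the paper is silently invoking (e.g.\ that $\theta_p$ is bounded away from zero a.s., or at least that its lower tail is no heavier than the estimation error), without which the stated rate cannot be derived from Proposition \ref{fest} alone. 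With that caveat, your argument is sound and is the same argument as the paper's.
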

\begin{proof}
From Proposition~\ref{fest}, we know that $\mathbb{E}\|Q_{T}-\hat{Q}_{T}\|^2_{(2)} = O(\rho_n)$. Since we are considering the ordered eigenvalues, $\hat{\theta}_1\ge\hat{\theta}_2\ge \cdots\ge 0$, we have that $\{\hat{p}^{\epsilon}> p\} = \{\hat{\theta}_{p+1}>\epsilon\}$.

A simple calculation on the Hilbert-Schmidt norm together with $\theta_{p+1} = 0~a.s$ yield

$$\hat{\theta}_{p+1} = |\hat{\theta}_{p+1}-\theta_{p+1}| \le \|\hat{Q}_{T}-Q_{T}\|_{(2)}.$$
Therefore,
$$\mathbb{P}(\hat{p}^\epsilon>p)\le \epsilon^{-2}\mathbb{E}\|\hat{Q}_{T}-Q_{T}\|^2_{(2)} = O(\epsilon^{-2}\rho(n)).$$
By noticing that $\{\hat{\theta} < p\} = \{\hat{\theta}_{p-1}< \epsilon\}$ and $\theta_{p-1}> 0~a.s$, we do the same argument to get

$$\mathbb{P}\{ \hat{p} < p  \} = (\theta_{p+1}-\epsilon)^2\mathbb{E}\|\hat{Q}_T - Q_T\|^2_{(2)} = O(\rho(n))$$
Since $\mathbb{P}(\hat{p}^\epsilon \neq p) = \mathbb{P}(\hat{p}^\epsilon>p) +\mathbb{P}(\hat{p}^\epsilon <p)$, the
result follows.
\end{proof}


\end{document}